\title{Action of the Mazur Pattern up to topological concordance}
\author{Alex Manchester}
\address{Rice Univ Math Dept---MS 136\\ PO Box 1892\\ Houston TX 77005}
\email{Alex.Manchester@rice.edu}
\urladdr{https://sites.google.com/view/alexmanchester/home}
\newtheorem{theorem}{Theorem}[section]   
\newtheorem{lemma}[theorem]{Lemma}       
\newtheorem{prop}[theorem]{Proposition}
\newtheorem{cor}[theorem]{Corollary}
\theoremstyle{definition}
\newtheorem{definition}[theorem]{Definition}
\theoremstyle{remark}
\newtheorem{question}[theorem]{Question}
\newtheorem{remark}[theorem]{Remark}
\newtheorem{conj}[theorem]{Conjecture}
\newcommand{\N}{{\mathbb{N}}}
\newcommand{\cA}{{\mathcal{A}}}
\newcommand{\cC}{{\mathcal{C}}}
\newcommand{\cF}{{\mathcal{F}}}
\newcommand{\Bl}{{\mathrm{Bl}}}
\newcommand{\Ch}{{\mathrm{Ch}}}
\newcommand{\Hom}{{\mathrm{Hom}}}
\newcommand{\ab}{{\mathrm{ab}}}
\newcommand{\id}{{\mathrm{id}}}
\newcommand{\lk}{{\mathrm{lk}}}
\newcommand{\metab}{{\mathrm{mab}}}
\newcommand{\sm}{{\mathrm{sm}}}
\newcommand{\ttop}{{\mathrm{top}}}
\begin{document}

\begin{abstract}    
In the `80s, Freedman showed that the Whitehead doubling operator acts trivally up to topological concordance. On the other hand, Akbulut showed that the Whitehead doubling operator acts nontrivially up to smooth concordance. The Mazur pattern is a natural candidate for a satellite operator which acts by the identity up to topological concordance but not up to smooth concordance. Recently there has been a resurgence of study of the action of the Mazur pattern up to concordance in the smooth and topological categories. Examples showing that the Mazur pattern does not act by the identity up to smooth concordance have been given by Cochran--Franklin--Hedden--Horn and Collins. In this paper, we give evidence that the Mazur pattern acts by the identity up to topological concordance.

In particular, we show that two satellite operators $P_{K_0,\eta_0}$ and $P_{K_1,\eta_1}$ with $\eta_0$ and $\eta_1$ freely homotopic have the same action on the topological concordance group modulo the subgroup of $(1)$-solvable knots, which gives evidence that they act in the same way up to topological concordance. In particular, the Mazur pattern and the identity operator are related in this way, and so this is evidence for the topological side of the analogy to the Whitehead doubling operator. We give additional evidence that they have the same action on the full topological concordance group by showing that up to topological concordance they cannot be distinguished by Casson-Gordon invariants or metabelian $\rho$-invariants.
\end{abstract}

\maketitle


\section{Introduction}
\label{Intro}

The Whitehead doubling operator, shown in Figure \ref{fig:SatelliteOperator}, is a fundamental example of a satellite operator. The strands going through the box should be tied into the knot $J$. A general satellite operator acts on a knot $J$ by grabbing a collection of strands and tying them into $J$. We will give a precise definition in Section \ref{Background}.

\begin{figure}
	\centering
	\includegraphics[width=0.5\linewidth]{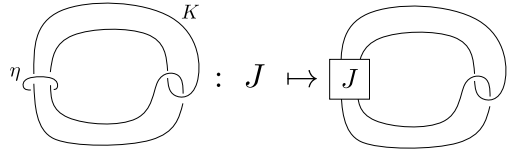}
	\caption{The Whitehead doubling operator}
	\label{fig:SatelliteOperator}
\end{figure}

Another important satellite operator is the {\it Mazur pattern}, which is shown on the left in Figure \ref{fig:HomRelSatOps}, along with the identity operator on the right, which takes every knot to itself. These two operators are closely related in a way which we will make precise later in Definition \ref{HomotopicallyRelated}, and for now we will simply note that $\lk(K_0,\eta_0)=\lk(K_1,\eta_1)$ and that both operators take the unknot to itself. The main theorem of this paper, Theorem \ref{1solvable}, essentially says that satellite operators which are related in a certain way induce the same function on knots up to some equivalence relation.

\begin{figure}
	\centering
	\includegraphics[width=0.5\linewidth]{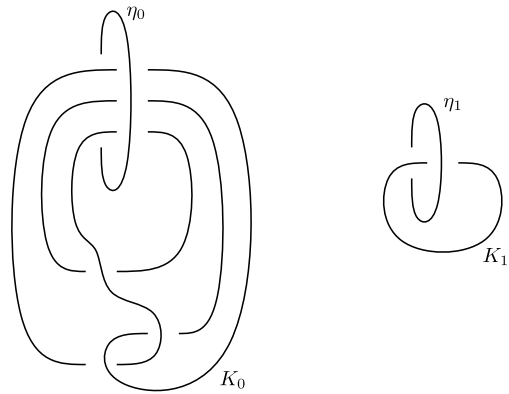}
	\caption{The Mazur pattern and the identity operator are homotopically related.}
	\label{fig:HomRelSatOps}
\end{figure}

All knots are assumed to be oriented, and given a knot $K$, the knot $-K$ is the mirror image of $K$ with the reverse orientation. Unless otherwise specified, manifolds and submanifolds (in particular slice disks and concordances) are smooth, though we will often use the words ``smooth" and ``smoothly" for emphasis.

Satellite operators give well-defined maps on the concordance groups $\cC_\ttop$ and $\cC_\sm$ which are of significant interest. For example, their kernels and images have been studied, and of particular note is the result of Cochran--Davis--Ray that satellite operators with strong winding number $\pm 1$ are injective up to topological concordance (see\cite{cochranInjectivitySatelliteOperators2014}, especially Theorem 5.1, as well as work of Hedden--Pinz\'{o}n-Caicedo
\cite{heddenSatellitesInfiniteRank2021} and Levine
\cite{levineNONSURJECTIVESATELLITEOPERATORS2016}). Also, satellite operators have been used to give evidence for a ``fractal nature'' of the concordance groups (see Cochran--Harvey
\cite{cochranGeometryKnotConcordance2018}, Cochran--Harvey--Leidy
\cite{cochranPrimaryDecompositionFractal2011}, Cochran--Harvey--Powell
\cite{cochranGropeMetricsKnot2017}, and Ray
\cite{raySatelliteOperatorsDistinct2015}). Furthermore, they are typically not homomorphisms, and it is conjectured that only three quite trivial satellite operators are homomorphisms (see Lidman--Miller--Pinz\'{o}n-Caicedo \cite{lidmanLinkingNumberObstructions2022}, and Miller
\cite{millerHomomorphismObstructionsSatellite2019}). Instead, satellite operators can be interpreted as group actions (see Davis--Ray
\cite{davisSatelliteOperatorsGroup2016}). Satellite operators have also been used to give evidence for and against the slice-ribbon conjecture (see Gompf--Miyazaki
\cite{gompfWelldisguisedRibbonKnots1995}, Miller--Piccirillo
\cite{millerKnotTracesConcordance2018}, and Yasui
\cite{yasuiCorksExotic4manifolds2017}).

The following theorem is a special case of Theorem 1.5 in \cite{cochranNewConstructionsSlice2007} of Cochran--Friedl--Teichner, and is motivation for this paper, particularly the main result Theorem \ref{1solvable}.

\begin{theorem}[see Theorem 1.5 in \cite{cochranNewConstructionsSlice2007}]
\label{COTThm}
If $K$ is topologically (or smoothly) slice, and $\eta$ is nullhomotopic in some topological slice disk complement for $K$, then $P(J)$ is topologically slice for any $J$ (in other words, $P:\cC_\ttop\to\cC_\ttop$ is the zero map).
\end{theorem}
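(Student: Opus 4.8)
The plan is to imitate, relative to a slice disk exterior of $K$, Freedman's proof that the untwisted Whitehead double of any knot is topologically slice --- which is exactly the case of this statement in which $K$ is the unknot and $\eta$ is a winding-number-zero clasp.

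First I would reduce to a statement about $4$-manifolds. Fix a topological, locally flat slice disk $\Delta$ for $K$ in $B^4$ and let $E = B^4 \setminus \nu(\Delta)$ be its exterior, so that $H_*(E;\mathbb{Z}) \cong H_*(S^1;\mathbb{Z})$, $\pi_1(E)$ is normally generated by the meridian $\mu_K$, and $\partial E = S^3_0(K)$; by hypothesis $\eta \subset S^3 \setminus \nu(K) \subset \partial E$ is nullhomotopic in $E$. Since $P(J) = K_\eta(J)$, it suffices to produce a compact topological $4$-manifold $W$ with $\partial W = S^3_0(P(J))$, with $H_*(W;\mathbb{Z}) \cong H_*(S^1;\mathbb{Z})$, and with $\pi_1(W)$ normally generated by a meridian $\mu$ of $P(J)$. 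Granting this, attach a $2$-handle to $W$ along $\mu$ with the framing for which surgery on $\mu \subset S^3_0(P(J))$ recovers $S^3$; the result $W_+$ is a compact, acyclic, simply connected topological $4$-manifold with $\partial W_+ = S^3$, hence homeomorphic to $B^4$ by Freedman's topological $4$-dimensional Poincar\'e conjecture, and the locally flat cocore of the attached handle is then a slice disk for $P(J)$.

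Next I would construct $W$ as a $4$-dimensional infection of $E$ along $\eta$ by $J$. Since $\eta$ is nullhomotopic in $E$ it bounds there a properly immersed disk, which we may take to be $0$-framed along $\eta$; a regular neighborhood of this disk meets $\partial E$ in a tubular neighborhood $\nu_3(\eta)$ of $\eta$ and supplies a slot in $E$ along which to infect. Since $B^4$ is simply connected, $J$ bounds a $0$-framed properly immersed disk $D_J$ in $B^4$, and $N := B^4 \setminus \nu(D_J)$ has $\partial N = S^3_0(J)$. One then assembles $W$ from $E$ and $N$ by the infection operation, gluing $\nu_3(\eta) \subset \partial E$ to the surgery solid torus in $\partial N = S^3_0(J)$ (meridian to meridian, longitude to longitude). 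Because infection commutes with $0$-surgery along a disjoint curve, $\partial W = (\partial E)_\eta(J) = (S^3_0(K))_\eta(J) = S^3_0(K_\eta(J)) = S^3_0(P(J))$, and a van Kampen computation, using that $\eta$ dies in $\pi_1(E)$ and that $\pi_1(N)$ is normally generated by a meridian, shows $\pi_1(W)$ is normally generated by $\mu$.

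The crux --- and the step I expect to be the main obstacle --- is verifying that $W$ has the $\mathbb{Z}$-homology of $S^1$. A Mayer--Vietoris computation identifies the only possible extra homology as second homology arising from the self-intersections of $D_J$, equivalently from the fact that $N$ need not itself be a $\mathbb{Z}$-homology $S^1 \times D^3$ unless $J$ is slice; dealing with this is where the hypothesis does its work, and where it is essential that $\eta$ be nullhomotopic in $E$ rather than merely nullhomologous. The nullhomotopy disk provides the geometric data needed to pair off and surger away these classes, the surgeries being legitimate because all of the fundamental groups in sight are normally generated by a meridian, hence trivialized --- and so ``good'' --- after the capping step, so that Freedman's disk embedding theorem applies. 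The cleanest way to organize this is to replace the immersed disks by Casson towers and invoke Freedman's theorem that the associated Casson handle is homeomorphic to a standard open $2$-handle. Once $W$ is known to be a $\mathbb{Z}$-homology $S^1 \times D^3$ with $\pi_1$ normally generated by $\mu$, the reduction of the first step finishes the proof; the remaining ingredients --- the Mayer--Vietoris and van Kampen bookkeeping, and the handle calculus identifying $\partial W$ and the framings --- are routine.
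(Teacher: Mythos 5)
The paper does not actually prove Theorem~\ref{COTThm} --- it quotes it as a special case of Theorem~1.5 of Cochran--Friedl--Teichner, so there is no ``paper's proof'' to compare against directly. Your overall strategy (reduce via Proposition~\ref{TopSliceCondition} to bounding a nice $4$-manifold $W$, build $W$ by infecting the slice-disk exterior $E$ along $\eta$, then use Freedman's machinery to fix $H_2$) is the right skeleton and is visibly the same program the paper runs for the related Theorem~\ref{1solvable} and Proposition~\ref{SliceGuarantee}. The reduction step is correct as written.

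However, there is a concrete error at the heart of the construction. You set $N := B^4\setminus\nu(D_J)$ for a properly \emph{immersed} $0$-framed disk $D_J$ and assert $\partial N = M_J$. This is only true when $D_J$ is \emph{embedded}, i.e.\ when $J$ is slice. For an immersed disk, $\nu(D_J)$ is a self-plumbed $D^2\times D^2$, and its inner boundary is no longer a solid torus $D^2\times S^1$; each double point alters the boundary $3$-manifold. This is exactly the phenomenon the paper tracks carefully in Section~\ref{Construction} (Figures~\ref{fig:SimpleBoundary}--\ref{fig:Zconstruction}), following Freedman--Quinn: the self-intersections of the immersed annulus $A$ contribute Whitehead clasps to the boundary surgery description, and an auxiliary manifold $Z$ is built to cap off the modified boundary. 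Because $\partial N\neq M_J$, the gluing ``$\nu_3(\eta)\subset\partial E$ to the surgery solid torus in $\partial N$'' does not produce $\partial W = M_{P(J)}$, and the Mayer--Vietoris argument you sketch is computing the homology of the wrong manifold. The standard fix is to use $M_J\times I$ (rather than an immersed-disk exterior), obtain an immersed annulus in the resulting $4$-manifold from the nullhomotopy of $\eta$ in $E$, and then excise and cap following the paper's template.

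The final step is also too vague to be convincing. You correctly identify that the extra $H_2$ must be killed by embedded spheres coming from Freedman, but the sphere/disk embedding theorem requires precise geometric input --- framed, $\pi_1$-null immersed representatives with algebraically trivial self-intersections \emph{and} algebraically dual spheres --- and you neither produce the dual spheres nor verify $\pi_1$-nullity. Note in particular that $H_2(E)=0$, so one cannot build a Casson tower for $\eta$ inside $E$ itself, and the appeal to ``$\pi_1$ trivialized after the capping step'' needs to be reorganized so that the embedding theorem is applied in a manifold where the dual spheres and the $\pi_1$ control are simultaneously available. Proposition~\ref{SliceGuarantee} of the paper states precisely the sufficient hypotheses in this style (disjoint framed $\pi_1$-null immersed disks with algebraically trivial self-intersections), and that should be your model for what must actually be checked.
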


Note that the Whitehead doubling operator shown in Figure \ref{fig:SatelliteOperator} satisfies the condition of Theorem \ref{COTThm} (since $K$ is the unknot and so has the trivial slice disk whose complement has fundamental group $\Z$, and $\eta$ has linking number 0 with $K$ and so must be nullhomotopic). So, this theorem gives a proof that all Whitehead doubles are topologically slice. The fact that all Whitehead doubles are topologically slice was first proved by Freedman in \cite{freedmanNewTechniqueLink1985}, and Theorem \ref{COTThm} is a large generalization of Freedman's result. It is also worth noting that all Whitehead doubles have Alexander polynomial 1, and Freedman--Quinn showed that all knots with Alexander polynomial 1 are topologically slice in Theorem 11.7B of \cite{freedmanTopology4manifolds1990}.

We think of the following definition as being a relative version of the condition in Theorem \ref{COTThm}.

\begin{definition}
\label{HomotopicallyRelated}
Two satellite operators $P_0$ and $P_1$ defined by the links $K_0\cup\eta_0$ and $K_1\cup\eta_1$, respectively, are \emph{homotopically related} if $K_0$ and $K_1$ are concordant and $\eta_0$ and $\eta_1$ cobound an immersed annulus in the complement of the concordance (that is, if $\eta_0\times\{0\}$ and $\eta_1\times\{1\}$ are freely homotopic in the concordance complement). Note that this definition makes sense in both the smooth and topological settings; however, we will primarily make use of it in the smooth setting.
\end{definition}

Satellite operators which satisfy the condition of Theorem \ref{COTThm} are exactly those which are topologically homotopically related to the satellite operator given by the unlink $K\sqcup\eta$.

Two (smoothly) homotopically related satellite operators which do not satisfy the condition of Theorem \ref{COTThm} are the aforementioned Mazur pattern and identity operator, shown in Figure \ref{fig:HomRelSatOps}. These two can be seen to be homotopically related since $K_0$ is the unknot, and so its complement has fundamental group $\Z$, as does the complement of the concordance from the unknot to itself. And so, $\eta_0$ is freely homotopic to a meridian since it has linking number 1 with $K_0$, which gives the immersed annulus.

Theorem \ref{COTThm} motivates the following question, asked by Ray at a 2019 AIM workshop \cite{SmoothConcordanceClasses}:

\begin{question}
\label{MainQuestion}
For homotopically related satellite operators $P_0$ and $P_1$, is $P_0(J)$ topologically concordant to $P_1(J)$ for each $J$? In other words, is $P_0(J)\#-P_1(J)$ topologically slice?
\end{question}

Note that if the answer to Question \ref{MainQuestion} is ``yes,'' this would imply that the actions of satellite operators $P$ which take the unknot to itself (for example, the Whitehead doubling operator, the Mazur pattern, and the identity operator) are completely determined up to topological concordance by their winding number via an argument similar to the one above.

In this paper, we give some evidence that the answer to this question is ``yes,'' in particular the following theorem. The condition of (1)-solvability lies between algebraic sliceness and sliceness, and will be defined later.
\begin{theorem}
\label{1solvable}
Given (smoothly) homotopically related satellite operators $P_\epsilon$ for $\epsilon\in\{0,1\}$, for any $J$ the knot $P_0(J)\#-P_1(J)$ is (1)-solvable. In other words, $P_0 \equiv P_1$ on $\cC/\cF_{(1)}$, where $\cF_{(1)}$ is the subgroup of (1)-solvable knots.
\end{theorem}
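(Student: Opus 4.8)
The plan is to build, by hand, a $(1)$-solution for the zero-framed surgery $M_{P_0(J)\#-P_1(J)}$, in the spirit of Theorem \ref{COTThm} and of Cochran--Orr--Teichner's technique of producing $(n)$-solvable knots by infection. Since there is a standard cobordism from $M_{P_0(J)}\sqcup -M_{P_1(J)}$ to $M_{P_0(J)\#-P_1(J)}$ (built from one $1$-handle and one $2$-handle, with vanishing rational homology relative to its outgoing end), it is enough to produce a cobordism $V$ from $M_{P_0(J)}$ to $M_{P_1(J)}$ such that $H_1$ of $V$ and of each end is $\Z$ via the inclusions and $H_2(V)$ admits a $(1)$-Lagrangian with $(1)$-duals; gluing $V$ to the standard cobordism then yields a $(1)$-solution, and hence $P_0(J)\#-P_1(J)\in\cF_{(1)}$. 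The geometric inputs are: a smooth concordance $C$ from $K_0$ to $K_1$; an immersed annulus $A$ realizing the free homotopy between $\eta_0$ and $\eta_1$, made transverse with finitely many double points; and the description of the satellite operation as ``delete $\nu\eta_\epsilon$, glue in the exterior $E_J$ of $J$ with a meridian--longitude interchange.''

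To construct $V$, first note that the exterior $E_C=(S^3\times[0,1])\setminus\nu C$ is a homology cobordism from $E_{K_0}$ to $E_{K_1}$; capping it along $\partial\nu C$ using the $0$-framings of $K_0$ and $K_1$ (which agree along $C$) gives a homology cobordism $\widehat E_C$ from $M_{K_0}$ to $M_{K_1}$ in which $\eta_0$ and $\eta_1$ are still freely homotopic (so in particular $\lk(K_0,\eta_0)=\lk(K_1,\eta_1)$ automatically). Now remove from $\widehat E_C$ a regular neighborhood of $A$ (an $S^1\times[0,1]\times D^2$ away from the double points, self-plumbed at each of them) and glue in a copy of $E_J\times[0,1]$ along the resulting $T^2\times[0,1]$, using on each level the meridian--longitude interchange defining the satellite operation. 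On the two ends this converts $M_{K_0},M_{K_1}$ into $M_{P_0(J)},M_{P_1(J)}$; in the interior, each double point of $A$ contributes a local plumbing, hence a pair of classes in $H_2$ -- a ``plumbing torus'' and a geometrically dual sphere. This is $V$.

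It then remains to verify the two axioms. The $H_1$-statement is a Mayer--Vietoris bookkeeping: $\widehat E_C$ is a homology cobordism, $E_J$ is a rational homology $S^1\times D^2$, and a half-lives--half-dies argument along the glued-in tori -- using the equal winding numbers so the two ends behave consistently -- gives $H_1\cong\Z$ with the inclusions isomorphisms. For the $H_2$-statement, one identifies a spanning set of $H_2(V)$ consisting of the double-point torus/sphere pairs together with classes from $H_*(E_J)$ (organized into a ``capped Seifert surface of $J$'' and a geometric dual, appearing with opposite signs at the two ends). The candidate $(1)$-Lagrangian is spanned by the plumbing tori and the capped Seifert surface: one checks it lies in $\pi_1(V)^{(1)}$ -- for the Seifert surface because curves on a Seifert surface are nullhomologous in the knot exterior, and for the plumbing tori because the free homotopy $\eta_0\simeq\eta_1$ lets the two sheets at each double point be joined by a loop lying in the commutator subgroup -- and that its self- and cross-intersections vanish in $\Z[\pi_1(V)/\pi_1(V)^{(1)}]=\Z[t^{\pm1}]$, which is a framing computation (each plumbing torus has square $0$, and the two copies of $J$ at opposite ends cancel in the abelianized pairing); the $(1)$-duals are then visible geometrically.

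The main obstacle -- and the reason the conclusion is $(1)$-solvability rather than topological sliceness, i.e.\ the reason this is evidence for, rather than a resolution of, Question \ref{MainQuestion} -- is controlling the double points of $A$ in the verification step. An immersed annulus cannot in general be replaced by an embedded one, so these double points would obstruct pushing the argument to the next stage (which, via the Freedman--Quinn disk-embedding machinery, would give sliceness), and showing they contribute only a hyperbolic summand carrying a Lagrangian in $\pi_1^{(1)}$ with vanishing $\Z[t^{\pm1}]$-intersections is where the real work lies. A secondary but genuine nuisance is tracking orientations and framings through the capping-off step so that $-P_1(J)$, and not $P_1(-J)$, appears.
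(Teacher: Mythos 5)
Your overall strategy is closely aligned with the paper's: delete a neighborhood of the immersed annulus $A$, repair the resulting hole using the knot $J$, and argue that what's left is a $(1)$-solution, with the double points of $A$ as the source of $H_2$. But as written there are two concrete gaps that prevent the argument from going through.

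First, the construction of $V$ breaks down precisely at the double points. When $A$ has self-intersections, $\nu(A)$ is a self-plumbed $D^2$-bundle, and the new boundary $\partial'\nu(A)$ created by deleting it is \emph{not} $T^2\times[0,1]$: each double point contributes a Whitehead-link-type piece to this $3$-manifold (this is exactly the surgery description worked out in \cite{freedman4ManifoldTopologyII1995}, which the paper reproduces). So the instruction ``glue in $E_J\times[0,1]$ along the resulting $T^2\times[0,1]$'' has no meaning in general. The paper avoids this by not trying to glue a product: it removes $\nu(A)$ entirely and then caps the new boundary component $\partial^-(N\setminus\nu(A))$ with a separately built $4$-manifold $Z$, whose Kirby diagram is tailored to the Whitehead-link pieces coming from the double points.

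Second, the identification of $H_2$ and of the $(1)$-Lagrangian is not right. You propose $c$ ``plumbing torus / dual sphere'' pairs plus a ``capped Seifert surface / dual'' pair from $H_*(E_J)$, and take the tori plus the Seifert surface as the Lagrangian. But $H_2(E_J)=0$, so there are no classes from $H_*(E_J)$ to organize; and a Lagrangian of rank $c+1$ cannot sit inside a hyperbolic form of rank $2c$. Moreover the tori that actually appear in $H_2$ of the deleted-annulus complement (boundaries of tubular neighborhoods of loops going around $A$) carry a generator of $H_1$ and so do \emph{not} lie in $\pi_1^{(1)}$; they cannot form a $(1)$-Lagrangian. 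What the paper does instead is build $Z$ from a slice disk complement for $J\#-J$, with one $2$-handle pair per double point of $A$; this forces $H_2(N')\cong H_2(Z)\cong\Z^{2c}$, and the Lagrangian half consists of $c$ capped Seifert surfaces whose $\pi_1$-image lies in the commutator subgroup \emph{of the slice disk complement for $J\#-J$}. That slice disk (and its complement) is the essential ingredient missing from your verification; it is what converts ``curves on a Seifert surface are nullhomologous in the knot exterior'' into a statement inside the $4$-manifold, and it is where the $J\#-J$-cancellation you gesture at is actually realized.

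You are right that the double points are where the real work lies, and right about the count ``one hyperbolic pair per double point''; but to close the gap you would need to (a) replace the $E_J\times[0,1]$ gluing with a genuine cap along the corrected boundary, following the Freedman--Quinn surgery description, and (b) source the Lagrangian from surfaces in a slice disk complement for $J\#-J$, not from plumbing tori.
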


In fact, at the end of Section \ref{Construction}, we will show that $P_0(J)\#-P_1(J)$ is {\it homotopy ribbon (1)-solvable} when $K_0\#-K_1$ is homotopy ribbon, which is slightly stronger than being (1)-solvable.


In the particular case of the Mazur pattern, denoted by $Q$, the 0-surgeries $M_J$ and $M_{Q(J)}$ are (smoothly) homology cobordant rel meridian. In fact this is true for any satellite operator with winding number 1 which takes the unknot to itself, as shown by Cochran--Franklin--Hedden--Horn in Corollary 2.2 of \cite{cochranKnotConcordanceHomology2013a}. It is an open question whether any two knots whose 0-surgeries are smoothly or topologically homology cobordant rel meridian are topologically concordant. 

In the smooth setting, Theorem \ref{COTThm} is false, and the answer to Question \ref{MainQuestion} is ``no.'' In unpublished work in 1983, Akbulut used gauge theory to show that the Whitehead double of the right-handed trefoil is not smoothly slice. Gauge theoretic techniques are very sensitive to matters of handedness, and it is still unknown whether the Whitehead double of the left-handed trefoil is smoothly slice. The question of smooth sliceness of the Whitehead double of the left-handed trefoil is one of the most important questions in low-dimensional topology. Furthermore, one can use Heegaard-Floer theory to show that for many knots $J$ (in particular the figure-eight knot), the knots $J$ and $Q(J)$ are not necessarily smoothly concordant (see Theorem 3.1 of Cochran--Franklin--Hedden--Horn \cite{cochranKnotConcordanceHomology2013a}, and Proposition 1.2 of Collins  \cite{collinsHomologyCobordismSmooth2022}).

In Section \ref{Background}, we will give some basic definitions and background. In Section \ref{Construction}, we will construct a (1)-solution for $P_0(J)\#-P_1(J)$ (which is moreover a homotopy ribbon (1)-solution when $K_0\#K_1$ is homotopy ribbon). In Section \ref{SlicenessConditions}, we will discuss some additional conditions which would guarantee that these knots are topologically slice. In Sections \ref{CGinvariants} and \ref{MetaRhoInvs}, we will show that $P_0(J)$ and $P_1(J)$ cannot be distinguished up to topological concordance using certain obstructions coming from Casson-Gordon invariants and metabelian $\rho$ invariants, respectively, which gives further evidence that these two knots are topologically concordant.

\subsection*{Acknowledgements}

I would like to thank Shelly Harvey, Miriam Kuzbary, and Allison Miller for their many valuable conversations and useful suggestions. I was partially supported by the RTG award NSF DMS-1745670.

\section{Background}
\label{Background}

We begin with a precise definition of satellite operators:

\begin{definition}
A {\it satellite operator} $P=P_{K,\eta}$ is given by a two component link $K\cup\eta$, where $\eta$ is an unknot, and acts on the set of knots in $S^3$ as follows: Notice that the exterior of $\eta$ is a solid torus $E_\eta$ which contains the knot $K$ and which has as a preferred longitude a meridian $\mu_\eta$ of $\eta$. The result of applying $P$ to $J$ is the knot given by deleting a tubular neighborhood of $J \subset S^3$ and gluing in $E_\eta$ in such a way as to identify the meridian of $\eta$ with the preferred longitude of $J$ and vice versa (which guarantees that the ambient manifold is still $S^3$). Then $P(J)$ is the image of $K$ in the resulting $S^3$. In other words, a satellite operator acts by gluing together the exterior $E_\eta$ of $\eta$ and the exterior of $J$ along their torus boundary via the homeomorphism $T^2 \to T^2$ which identifies the meridian of $\eta$ with the preferred longitude of $J$ and vice versa, and while doing this, keeps track of the knot $K \subset E_\eta$.
\end{definition}

Notice that $P(U)=K$ since when $J=U$, the exterior of $J$ is just a solid torus, which, when glued to $E_\eta$ simply replaces the solid torus neighborhood of $\eta$ in the trivial way.

	A knot in $S^3$ is called {\it smoothly/topologically slice} if it is the boundary of a smooth/locally flat properly embedded disk in $B^4$. Two knots $K$ and $J$ are {\it smoothly/topologically concordant} if there is a smooth/locally flat properly embedded annulus in $S^3 \times I$ with boundary $(K\times\{0\})\sqcup(-J\times\{1\})$. The set of equivalence classes of knots up to smooth/topological concordance is denoted $\cC_{\sm}$ or $\cC_{\ttop}$, respectively.

The sets $\cC_{\sm}$ and $\cC_{\ttop}$ both form groups under the connect sum operation $\#$, with $[K]^{-1}=[-K]$. We will abuse notation and write $K$ for its concordance class. The fact that $-K=K^{-1}$ can be seen by taking the pair $(S^3 \times I, K \times I)$, and removing a tubular neighborhood of an arc running from a point on $K\times\{0\}$ to a point on $-K\times\{1\}$ (note that taking $S^3$ with the standard orientation means that the boundary on the ``1'' side is $(S^3,-K)$), after which the remainder of $S^3 \times I$ is a 3-ball and the remainder of the annulus $K \times I$ is a slice disk for $K\#-K \subset S^3 \# S^3 = S^3$. (This construction works in either the smooth or the topological setting.)

Note that $K$ and $J$ are (smoothly or topologically) concordant if and only if the connect sum $K\#-J$ is (smoothly or topologically) slice, by an argument similar to that in the previous paragraph showing that $[K]^{-1}=[-K]$.

Satellite operators descend to maps $\cC_\sm\to\cC_\sm$ and $\cC_\ttop\to\cC_\ttop$ (but, as noted before, are typically not homomorphisms, and can instead be thought of as group actions).

We will need the following well-known characterization of topological sliceness.

\begin{prop}[see e.g. Prop 2.1 of \cite{cochranNewConstructionsSlice2007}]
\label{TopSliceCondition}
A knot $K$ is topologically slice if and only if its 0-surgery $M_K$ bounds a topological 4-manifold $W$ such that
	\begin{enumerate}
	\item $\pi_1(W)$ is normally generated by the meridian $\mu$ of $K$,
	\item $H_1(W)\cong\Z$ (in other words, the inclusion map $H_1(M_K) \to H_1(W)$ is an isomorphism), and
	\item $H_2(W)=0$.
	\end{enumerate}
\end{prop}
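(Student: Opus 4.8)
The plan is to prove both implications, the ``only if'' direction by classical algebraic topology applied to a slice disk complement, and the ``if'' direction by a single handle attachment followed by an appeal to Freedman's topological surgery machinery.

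For the forward direction, suppose $D\subset B^4$ is a locally flat slice disk for $K$, and let $W$ be the exterior of $D$, that is, the closure of $B^4\setminus\nu(D)$ for a tubular neighborhood $\nu(D)\cong D^2\times D^2$ (such a neighborhood exists by Freedman--Quinn, and the normal bundle is trivial since $D$ is contractible). First I would identify $\partial W=M_K$: the frontier of $\nu(D)$ is a solid torus $D\times\partial D^2$ glued to the knot exterior $E_K\subset S^3$ along $T^2$, with meridian $\partial D\times\mathrm{pt}$ sent to the framing of $K$ induced by $D$, which is the $0$-framing because $[\partial D]=0$ in $H_1$; so the gluing is precisely $0$-surgery. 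Then I would run Mayer--Vietoris on $B^4=W\cup\nu(D)$ with $W\cap\nu(D)\simeq S^1$: since $\nu(D)$ and $B^4$ are contractible, this immediately yields $H_2(W)=0$ and $H_1(W)\cong\Z$, with generator the meridian, and van Kampen on the same decomposition gives that $\pi_1(W)$ is normally generated by that meridian (the meridian of $D$ being isotopic in $\partial W$ to the meridian $\mu$ of $K$). All of this is routine.

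For the converse, suppose $W$ bounds $M_K$ and satisfies (1)--(3). Write the $0$-surgery as $M_K=E_K\cup_{T^2}(S^1\times D^2)$; the meridian $\mu\subset M_K$ is the core $S^1\times\{0\}$ of the filling torus. Attach to $W$ a $2$-handle along $\mu$ using the framing coming from the filling torus, and call the result $W'$. I would then check: (a) surgering $M_K$ along $\mu$ with this framing converts the $0$-filling of $E_K$ back into the meridional filling, so $\partial W'=S^3$; and (b) the cocore of the $2$-handle is a locally flat properly embedded disk $\delta\subset W'$ whose boundary is a meridian of $\mu$, which in $\partial W'=S^3$ is exactly the $0$-framed longitude of $K$, hence isotopic to $K$ (so $\delta$ is a slice disk for $K$ up to isotopy, which suffices). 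Next I would show $W'$ is contractible: van Kampen with (1) gives $\pi_1(W')=\pi_1(W)/\langle\langle\mu\rangle\rangle=1$; the long exact sequence of $(W',W)$ together with (2), (3), and the fact that $[\mu]$ generates $H_1(W)=\Z$ gives $H_1(W')=H_2(W')=0$; and Poincar\'e--Lefschetz duality on $W'$ (whose boundary is $S^3$) then forces $H_3(W')=H_4(W')=0$. Finally I would invoke Freedman's theorem that a compact contractible topological $4$-manifold with boundary $S^3$ is homeomorphic to $B^4$, so that $\delta\subset W'\cong B^4$ exhibits $K$ as topologically slice.

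The step that does genuine work, as opposed to bookkeeping, is the last one: the appeal to Freedman's topological surgery theory (equivalently the topological $4$-dimensional Poincar\'e conjecture) to promote the contractible manifold $W'$ to an honest $4$-ball. Everything else, particularly in the converse direction, is a matter of carefully tracking which surgery and which framing on $\mu$ reconstruct $S^3$ and identify the cocore's boundary with $K$ --- the sort of Kirby-calculus bookkeeping that is easy to get a sign or framing wrong in, but that presents no conceptual difficulty.
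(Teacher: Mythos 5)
Your proof is correct and follows the standard argument for this well-known characterization, which is the one given (or sketched) in the cited reference: slice disk exterior plus Mayer--Vietoris/van Kampen for the forward direction, and a single $2$-handle attachment along $\mu$ followed by Freedman's classification of compact contractible topological $4$-manifolds bounding $S^3$ for the converse. The paper itself does not reprove the proposition but cites Cochran--Friedl--Teichner, and your argument is essentially the same as theirs.
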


It is often difficult to check whether $M_K$ bounds such a topological 4-manifold, but sometimes it is easier to show that $M_K$ bounds a smooth 4-manifold satisfying the following weaker condition, due to Cochran-Orr-Teichner in \cite{cochranKnotConcordanceWhitney2003}. Recall that the derived series of a group $G$ is defined inductively as $G^{(0)}:=G$ and $G^{(n+1)}:=[G^{(n)}:G^{(n)}]$, that is, each term is the commutator subgroup of the previous term.

\begin{definition}
\label{Solvability}
A knot $K$ is called \emph{$(n)$-solvable} if $M_K$ bounds a smooth 4-manifold $W$ such that
\begin{enumerate}
	\item the inclusion map $H_1(M_K) \to H_1(W)$ is an isomorphism,
	\item $H_2(W)$ is freely generated by embedded surfaces $\{S_i, T_i\}$ with trivial normal bundle such that $S_i$ and $T_i$ intersect once transversely, and otherwise the surfaces are disjoint, and
	\item the inclusion maps $\pi_1(S_i),\pi(T_i)\to\pi(W)$ land in $\pi_1(W)^{(n)}$
\end{enumerate}
If, additionally, the inclusion maps $\pi_1(S_i)\to\pi_1(W)$ land in $\pi_1(W)^{(n+1)}$, then $K$ is said to be \it{$(n.5)$-solvable}. If $K$ is $(h)$-solvable for some $h\in\frac{1}{2}\N$, the manifold $W$ is called an \emph{$(h)$-solution} for $K$.
\end{definition}

\begin{remark}
	Notice that smooth concordance preserves $(h)$-solvability since if $K$ and $J$ are concordant and $J$ is $(h)$-solvable, we can construct an $(h)$-solution for $K$ in the following way: Do 0-surgery $\times I$ along a concordance from $K$ to $J$, and glue an $(h)$-solution for $J$ to the $M_J$ boundary component. By a Seifert-van Kampen and Mayer-Vietoris argument, this gives an $(h)$-solution for $K$ with the surfaces representing generators for the second homology simply the images of the surfaces representing the second homology in the $(h)$-solution for $J$.
	
	Also, if $K$ and $J$ are both $(h)$-solvable, with $(h)$-solutions $W_K$ and $W_J$, respectively, one can construct an $(h)$-solution $W$ for $K\#J$ by gluing together $W_K$ and $W_J$ along the solid tori in their boundaries given by tubular neighborhoods of meridians of $K$ and $J$ respectively, identifying the meridians and preferred longitudes in the torus boundaries. The boundary of $W$ is then $M_{K\#J}$, which can by seen by viewing $K\#\cdot$ as a satellite operator given by $K\cup\mu_K$: the tubular neighborhoods of meridians of $K$ and $J$ are now buried in the interior of $W$, and the boundary is given by gluing together the exteriors of $\mu_K$ and $\mu_J$ (note that $\mu_J$ can be freely homotoped to be the core of the surgery solid torus in $M_J$, and so the exterior of $\mu_J$ is just the exterior of $J$). Of course, we could just have well have interchanged the roles of $K$ and $J$. Again by Seifert-van Kampen and Mayer-Vietoris, this $W$ is an $(h)$-solution for $K\#-J$.
	
	Lastly, if $K$ is $(h)$-solvable then so is $-K$ by taking the mirror image of an $(h)$-solution for $K$. So, the $(h)$-solvable filtration is a filtration by subgroups.
\end{remark}

In the previous remark, we used the fact that the 0-surgery on a connect sum $K\#J$ is homeomorphic to the union of the exteriors of $K$ and $J$, glued along their torus boundary in such a way as to identify the meridians and longitudes. We will use this fact at a few points throughout this paper.

\begin{definition}
	The set of $(h)$-solvable knots up to smooth concordance is denoted $\cF_{(h)}$.
\end{definition}

This defines a filtration of the concordance group $\cC_\sm$:
	$$\cdots\subseteq\cF_{(1.5)}\subseteq\cF_{(1)}\subseteq\cF_{(0.5)}\subseteq\cF_{(0)}\subseteq\cC_\sm$$
This filtration was first defined by Cochran-Orr-Teichner in \cite{cochranKnotConcordanceWhitney2003}. Note that if the surfaces $S_i,T_i$ were spheres, condition (3) of Definition \ref{Solvability} would be trivially satisfied for all $h$. In this case, we could do surgery on one sphere from each pair, replacing a tubular neighborhood $S^2 \times D^2$ with $B^3 \times S^1$ (which have the same boundary), in order to get a manifold which satisfies the conditions of Prop \ref{TopSliceCondition}. All topologically slice knots are $(h)$-solvable for every $h\in\frac{1}{2}\N$ as noted in Section 1 of \cite{cochranKnotConcordanceHigherorder2009}, and it is conjectured that $\bigcap_{h\in\frac{1}{2}\N}\cF_{(h)}$ contains precisely the topologically slice knots. It is known that for all $n\in\N$, the quotient $\cF_{(n)}/\cF_{(n.5)}$ has infinite rank (see \cite{cochranKnotConcordanceHigherorder2009}). However, it is unknown whether $\cF_{(n.5)}/\cF_{(n+1)}$ is even nontrivial for any $n$.

\section{Construction of a (1)-solution}
\label{Construction}

In this section we will prove Theorem \ref{1solvable} by constructing a (1)-solution for $P_0(J)\#-P_1(J)$.

Fix the following notation: Let $P_\epsilon$ be homotopically related satellite operators coming from the links $K_\epsilon\cup\eta_\epsilon$, $\epsilon\in\{0,1\}$. Let $C$ be the (smooth) concordance between $K_0$ and $K_1$, and let $W = (S^3 \times I) \setminus \nu(C)$. Note that $W$ is a manifold satisfying the conditions of Proposition \ref{TopSliceCondition} for $K_0\#-K_1$. Let $J$ be a knot, and let $P_\epsilon(J)$ be the image of $J$ under the satellite operator $P_\epsilon$. Let $\mu$ be a fixed meridian of $J$.

 Let $N$ be the manifold obtained by gluing two copies of $M_J \times I$ to $W$ in the following way (see Figure \ref{fig:NConstruction} for a schematic picture.): recall that $M_J=(S^3 \setminus J)\cup_f(S^1 \times D^2)$ where $f : \partial(S^1 \times D^2) \to \partial (S^3 \setminus J)$ identifies the meridian of $S^1 \times D^2$ with the preferred longitude of $J$ and the longitude of $S^1 \times D^2$ with the meridian of $J$. Then $N$ is obtained by gluing two copies of $M_J \times I$, which we will denote by $(M_J \times I)_\epsilon$ for $\epsilon\in\{0,1\}$, to $W$, by identifying the surgery solid torus $S^1 \times D^2 \subset (M_J \times \{0\})_\epsilon$ with $\nu(\eta_\epsilon)$ in such a way as to identify the longitude of the surgery solid torus (that is, a meridian of $J$) with the 0-framed longitude of $\eta$. This gluing ``buries'' the two solid tori that get glued together inside the interior of the new 4-manifold. And, the effect of these identifications on the boundary amounts to gluing the exteriors of $J$ and $\eta$ together via the homeomorphism $T^2 \to T^2$ which interchanges meridians and longitudes, so that $\partial N = M_{P_0(J)\#-P_1(J)}\sqcup-M_J\sqcup-M_J$. Denote the two copies of $\mu\times\{1\}$ by $\mu_\epsilon$, and let $A$ be the immersed annulus in $W$ joining the $\eta_\epsilon$, extended through $(M_J \times I)_\epsilon$ by crossing with $I$, so that its boundary components are the two $\mu_\epsilon$. Finally, let $\hat{\Sigma}_\epsilon$ denote Seifert surfaces for $J$ capped off by longitudinal disks in the copies of $(M_J \times \{1\})_\epsilon$.

\begin{figure}
	\begin{center}
	\includegraphics[width=0.3\linewidth]{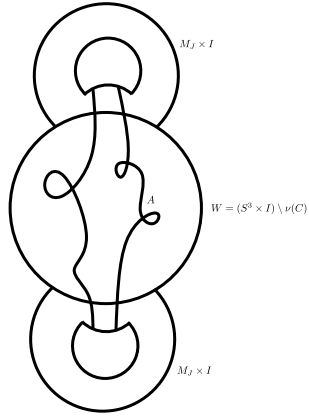}
	\end{center}
	\caption{Construction of $N$: Two copies of $M_J \times I$ are glued to $W=(S^3 \times I)\setminus\nu(C)$ along solid torus portions of their respective boundaries. This ``buries'' the solid tori in the interior of $N$, and the new boundary is $M_{P_0(J)\#-P_1(J)}\sqcup-M_J\sqcup-M_J$.}
	\label{fig:NConstruction}
\end{figure}

We will now construct a (1)-solution for $P_0(J)\#-P_1(J)$. This will involve cutting out a neighborhood of $A$, giving a manifold with two boundary components, one of which is $M_{P_0(J)\#-P_1(J)}$. We will then find an appropriate manifold with which to ``cap off'' the other boundary component, which will give us our (1)-solution.

This will involve a series of lemmas keeping track of the fundamental group and homology of the intermediate steps in the construction. Throughout, we will only care about normal generating sets for fundamental groups and subgroups, and so we will only care about elements of the normal generating sets up to conjugacy, and in turn will only care about their representatives up to free homotopy. Given group elements $g_1,...,g_n \in G$, we denote the subgroup they normally generate by $\langle\langle g_1,...,g_n \rangle\rangle$.

\begin{lemma}
\begin{enumerate}
\item $\pi_1(N)$ is normally generated by the meridian of $P_0(J)$ (or equivalently of $P_1(J)$, $K_0$, or $K_1$ since these meridians are all freely homotopic inside $N$).
\item $H_1(S^3 \setminus \nu(P_\epsilon(J))) \to H_1(N)$ is an isomorphism. That is, $H_1(N)\cong\Z$ and is generated by the meridian of $P_0(J)$, or equivalently of $P_1(J)$, $K_0$, or $K_1$.
\item $H_2(M_J) \oplus H_2(M_J) \to H_2(N)$ is an isomorphism. That is, $H_2(N)\cong\Z^2$ and is generated by the $\hat{\Sigma}_\epsilon$.
\end{enumerate}
\end{lemma}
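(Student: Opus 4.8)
The plan is to compute $\pi_1(N)$, $H_1(N)$, and $H_2(N)$ by a Seifert--van Kampen and Mayer--Vietoris analysis of the decomposition $N = W \cup (M_J \times I)_0 \cup (M_J \times I)_1$, where the pieces are glued along the solid tori $\nu(\eta_\epsilon) \cong (S^1 \times D^2) \subset (M_J \times \{0\})_\epsilon$. I would first record the inputs: by Proposition \ref{TopSliceCondition} (applied to the concordance exterior), $W$ has $\pi_1(W)$ normally generated by the meridian $\mu_{K_0}$ of $K_0$, $H_1(W) \cong \Z$ generated by that meridian, and $H_2(W) = 0$; and $M_J \times I \simeq M_J$ has $H_1 \cong \Z$ generated by the meridian $\mu$ of $J$, $H_2 \cong \Z$ generated by the capped Seifert surface $\hat\Sigma$, and $\pi_1(M_J)$ normally generated by $\mu$. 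The gluing locus is a solid torus $T_\epsilon = \nu(\eta_\epsilon)$, with $\pi_1(T_\epsilon) \cong \Z$, generated by the core; crucially, under the gluing this core is a meridian $\mu_\epsilon$ of $J$ (it is identified with the $0$-framed longitude of $\eta_\epsilon$, which is a meridian of $J$ in $M_J$-coordinates), while inside $W$ the core of $\nu(\eta_\epsilon)$ is $\eta_\epsilon$ itself.

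For part (1): glue $(M_J\times I)_0$ to $W$ along $T_0$. Van Kampen gives $\pi_1(W\cup (M_J\times I)_0) = \pi_1(W) *_{\Z} \pi_1(M_J)$, amalgamated over $\langle \eta_0 \rangle = \langle \mu \rangle$. Since $\pi_1(M_J)$ is normally generated by $\mu$, and $\mu$ is identified with $\eta_0 \in \pi_1(W)$, the whole group is normally generated by $\pi_1(W)$, hence by $\mu_{K_0}$. Then, since $\eta_0$ is freely homotopic in $W$ to $\eta_1$ (this is exactly the homotopically-related hypothesis giving the annulus $A$, equivalently both have linking number/homotopy class determined by winding number — more precisely $\eta_0 \simeq \eta_1$ freely in $W$), attaching the second copy $(M_J \times I)_1$ along $T_1$ adds only elements already normally generated by $\eta_1 \simeq \eta_0$, so $\pi_1(N)$ stays normally generated by $\mu_{K_0}$. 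Finally, the meridians of $P_0(J)$, $P_1(J)$, $K_0$, $K_1$ are all freely homotopic in $N$: the meridian of $P_\epsilon(J)$ is a meridian of $K_\epsilon$ sitting on $\partial N$, $K_0$ and $K_1$ are concordant in $W$ so their meridians are freely homotopic there, proving the parenthetical remark.

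For parts (2) and (3): run Mayer--Vietoris twice. First, for $X_0 := W \cup_{T_0} (M_J \times I)_0$, the sequence $H_2(T_0) \to H_2(W) \oplus H_2(M_J) \to H_2(X_0) \to H_1(T_0) \to H_1(W)\oplus H_1(M_J) \to H_1(X_0) \to 0$ reads $0 \to 0 \oplus \Z\langle\hat\Sigma_0\rangle \to H_2(X_0) \to \Z\langle\eta_0\rangle \xrightarrow{(\mu_{K_0},\,\mu)\mapsto} \Z \oplus \Z \to H_1(X_0)\to 0$; the map $H_1(T_0)\to H_1(W)\oplus H_1(M_J)$ sends the core to $(\eta_0, \mu)$, and $\eta_0$ generates $H_1(W)=\Z$ while $\mu$ generates $H_1(M_J)=\Z$, so this map is injective with cokernel $\Z$ (the diagonal is killed, identifying the two $\Z$'s). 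Hence $H_2(X_0) \cong \Z\langle\hat\Sigma_0\rangle$ and $H_1(X_0)\cong\Z$. Repeating with $N = X_0 \cup_{T_1} (M_J\times I)_1$: now $H_1(T_1) \to H_1(X_0) \oplus H_1(M_J)$ sends the core to $(\eta_1, \mu)$, and $\eta_1 = \eta_0$ still generates $H_1(X_0)=\Z$ (here is where I use $\eta_1 \simeq \eta_0$, now on the level of $H_1$), so again the connecting map $H_2(N) \to H_1(T_1)$ is zero and $H_1(T_1)\to H_1(X_0)\oplus H_1(M_J)$ is injective with cokernel $\Z$. Therefore $H_1(N)\cong\Z$, generated by $\mu_{K_0}$ (which I identify with all the listed meridians as in part (1)), and $H_2(N) \cong H_2(X_0) \oplus H_2(M_J) \cong \Z\langle\hat\Sigma_0\rangle \oplus \Z\langle\hat\Sigma_1\rangle$, i.e. the map $H_2(M_J)\oplus H_2(M_J) \to H_2(N)$ is an isomorphism.

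The main obstacle is bookkeeping the identifications correctly: tracking which curve on each gluing torus is the core of $\nu(\eta_\epsilon)$, verifying that under the $f$-gluing this core becomes a meridian $\mu$ of $J$ on the $M_J$ side while being $\eta_\epsilon$ on the $W$ side, and — the one genuinely substantive point — using the homotopically-related hypothesis to know $\eta_1$ is (freely) homotopic, hence homologous, to $\eta_0$ in $W$ so that it still generates $H_1(X_0)\cong\Z$ when the second piece is attached. Everything else is a routine van Kampen / Mayer--Vietoris computation with the cited facts $H_*(W)$ and $H_*(M_J)$ as inputs.
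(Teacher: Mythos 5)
Your proposal follows the same two-step Seifert--van Kampen / Mayer--Vietoris decomposition as the paper, attaching the copies of $M_J \times I$ to $W$ one at a time along $\nu(\eta_\epsilon)$, and the computation goes through. Two small inaccuracies are worth flagging, though neither is fatal. First, you assert that $\eta_0$ generates $H_1(W) \cong \Z$; this is only true when the winding number $w = \lk(K_0,\eta_0)$ is $\pm 1$ (as for the Mazur pattern), but the theorem is stated for general homotopically related operators. In general $\eta_0 = w\,\mu_{K_0}$ in $H_1(W)$, so the image of the generator of $H_1(T_0)$ is $(w,1) \in H_1(W)\oplus H_1(M_J)$. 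The conclusion you want --- that the map is injective with cokernel $\Z$ --- still holds because $(w,1)$ is a primitive vector, but the reason is that the second coordinate is a generator, not the first; the paper isolates exactly this point by observing only that $H_1(\nu(\eta_\epsilon)) \to H_1(M_J\times I)$ is an isomorphism, without any claim about $H_1(W)$. Second, you invoke the homotopically-related hypothesis (free homotopy of $\eta_0$ and $\eta_1$ in $W$) in both part (1) and the second Mayer--Vietoris step, but it is not actually needed for this lemma: for (1), after the first gluing $\pi_1(X_0)$ is already normally generated by $\mu_{K_0}$, and the second gluing only adjoins $\pi_1(M_J)$ normally generated by $\mu_1 \simeq \eta_1$, which already lies in $\pi_1(W)$ and hence in the normal closure of $\mu_{K_0}$; for the homology, the same primitivity of $(w,1)$ argument applies with $\eta_1$ in place of $\eta_0$, independently of any relation between $\eta_0$ and $\eta_1$. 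The free-homotopy hypothesis is genuinely used later (in cutting out the annulus $A$), but the present lemma holds for any two-piece gluing of this type.
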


\begin{proof}
(1) $\pi_1(W)$ is normally generated by the meridian of $K_0$ (or equivalently of $K_1$ since these are freely homotopic and so conjugate in $\pi_1(W)$). In $N$, a meridian of $K_\epsilon$ is freely homotopic to a meridian of $P_\epsilon(J)$. Moreover, $\pi_1((M_J \times I)_\epsilon)$ is normally generated by $\mu_\epsilon$, which is freely homotopic to the curve $\mu\times\{0\}\subset(M_J \times I)_\epsilon$ identified with $\eta_\epsilon$ under the gluing used to construct $N$, and so is in the subgroup normally generated by a meridian of $K_0$. Therefore, by Seifert-van Kampen, $\pi_1(N)=\pi_1(W)*_{\langle\langle\eta_0\mu_0^{-1}\rangle\rangle}\pi_1(M_J \times I)*_{\langle\langle\eta_1\mu_1^{-1}\rangle\rangle}\pi_1(M_J \times I)$ is normally generated by a meridian of $K_0$, or equivalently of $K_1$, $P_0(J)$, or $P_1(J)$ since these are freely homotopic in $N$.

\setlength\mathsurround{0pt}

(2 and 3) If we attach one copy of $M_J \times I$, say the one glued to $\eta_0$, we have the following Mayer-Vietoris sequence:
	$$\begin{tikzcd}
		0 \arrow[r]
			& 0 \oplus H_2 (M_J \times I) \arrow[r] \arrow[d, phantom, ""{coordinate, name=Z}]
			& H_2(W \cup (M_J \times I))  \arrow[dll,
											rounded corners,
											to path={ -- ([xshift=2ex]\tikztostart.east)
											|- (Z) [near end]\tikztonodes
											-| ([xshift=-2ex]\tikztotarget.west)
											-- (\tikztotarget)}] \\
		H_1(\nu(\eta_0)) \arrow[r]
			& H_1(W) \oplus H_1(M_J \times I) \arrow[r]
			& H_1(W \cup (M_J \times I)) \arrow[r]
			& 0
	\end{tikzcd}.$$

\setlength\mathsurround{0.8pt}

Since $\eta_0$ is identified with $\mu\times\{0\}\subset(M_J\times\{0\})_0$, which is freely homotopic to $\mu_0$, the map $H_1(\nu(\eta_0)) \to H_1(M_J \times I)$ is an isomorphism. This implies that the maps $H_1(W) \to H_1(W \cup (M_J \times I))$ and $H_2(M_J \times I) \to H_2(W \cup (M_J \times I))$ are isomorphisms.\footnote{To see that the map $H_1(W) \to H_1(W \cup (M_J \times I))$ is an isomorphism, choose the basis for $H_1(W) \oplus H_1(M_J \times I)$ consisting of the image of a generator of $H_1(\nu(\eta_0))$ (which is $\{\text{some element of }H_1(W)\} \oplus \{\text{a generator of }H_1(M_J \times I)\}$), and $\{\text{a generator of }H_1(W)\}\oplus\{0\}$. This is a basis since $H_1(\nu(\eta_0)) \to H_1(M_J \times I)$ is an isomorphism. Then, one can see that $H_1(W \cup (M_J \times I))$ is generated by the image of a generator of $H_1(W)$. We apply a similar argument later in this proof, and also in the proof of Lemma \ref{N'Homology}.} And noting that $H_1(S^3 \setminus \nu(P_\epsilon(J))) \cong H_1(S^3 \setminus \nu(K_\epsilon)) \to H_1(W)$ is an isomorphism, we see that $H_1(S^3 \setminus \nu(P_\epsilon(J))) \to H_1(W)$ is an isomorphism.

Similarly, attaching another copy of $M_J \times I$ to the other side yields

\setlength\mathsurround{0pt}

	$$\begin{tikzcd}
		0 \arrow[r]
			& H_2(W \cup (M_J \times I)) \oplus H_2 (M_J \times I) \arrow[r] \arrow[d, phantom, ""{coordinate, name=Z}]
			& H_2(N)  \arrow[dll,
									rounded corners,
									to path={ -- ([xshift=2ex]\tikztostart.east)
									|- (Z) [near end]\tikztonodes
									-| ([xshift=-2ex]\tikztotarget.west)
									-- (\tikztotarget)}] \\
		H_1(\nu(\eta_1)) \arrow[r]
			& H_1(W \cup (M_J \times I)) \oplus H_1(M_J \times I) \arrow[r]
			& H_1(N) \arrow[r]
			& 0
	\end{tikzcd}.$$
	
\setlength\mathsurround{0.8pt}
	
Similarly, since $H_1(\nu(\eta_1)) \to H_1(M_J \times I)$ is an isomorphism, we have that $H_1(W \cup (M_J \times I)) \to H_1(N)$ and $H_2(W \cup (M_J \times I)) \oplus H_2(M_J \times I) \to H_2(N)$ are isomorphisms. We can compose these with the isomorphisms from the first Mayer-Vietoris sequence to obtain isomorphisms $H_1(S^3 \setminus \nu(P_\epsilon(J))) \to H_1(N)$ and $H_2(M_J) \oplus H_2(M_J) \to H_2(N)$.
\end{proof}

From now on, let $\lambda$ denote the normal generator of $\pi_1(N)$ from the previous lemma. Recall that $\mu_\epsilon$ are the meridians of $J$ in $M_J\times\{1\}\subset(M_J \times I)_\epsilon$, and $A$ is the immersed annulus cobounded by the $\eta_\epsilon$, extended through the $(M_J \times I)_\epsilon$ to the boundary $-M_J$s via the product structure so that it is cobounded by $\mu_\epsilon$.

\begin{lemma}
\begin{enumerate}
\item $\pi_1(N\setminus\nu(A))$ is normally generated by $\lambda$ and a meridian of $A$.
\item $H_1(N \setminus\nu(A)) \to H_1(N)$ is an isomorphism.
\item $H_2(N \setminus\nu(A)) \cong \Z \oplus G$ (where $\hat{\Sigma}_0-\hat{\Sigma}_1$ generates the $\Z$  summand, and the $G$ summand has one generator for each self-intersection of $A$, with possibly some relations)
\end{enumerate}
\end{lemma}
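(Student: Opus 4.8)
The plan is to compute the homotopy and homology of $N \setminus \nu(A)$ by a Seifert--van Kampen and Mayer--Vietoris argument, treating $N = (N \setminus \nu(A)) \cup \nu(A)$. Since $A$ is an immersed (not embedded) annulus, $\nu(A)$ is a regular neighborhood of the image, which is a $2$-complex: an annulus with finitely many transverse double points, each double point being a clasp-type self-intersection. So $\nu(A)$ deformation retracts onto this $2$-complex, and its intersection with $N \setminus \nu(A)$ is the ``unit normal bundle'' of that complex. The first step is to understand these pieces: $\pi_1(\nu(A))$ is free on one generator for each double point (the core circle of the annulus being homotopically determined by $\mu_0$, which is already $\lambda$-conjugate in $N$), and $H_1$ of the boundary piece $\partial \nu(A) \cap (N\setminus\nu(A))$ is generated by a meridian of $A$ together with loops around the double points.

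First I would set up van Kampen for (1): $\pi_1(N) = \pi_1(N\setminus\nu(A)) *_{\pi_1(\text{gluing region})} \pi_1(\nu(A))$. Since $\pi_1(N)$ is normally generated by $\lambda$ (previous lemma), and the generators of $\pi_1(\nu(A))$ coming from double points are themselves products of conjugates of a meridian of $A$ (a clasp intersection contributes a loop that bounds a punctured disk in the complement meeting $A$ in two points of opposite... actually meeting a meridian disk structure), I would argue that killing $\lambda$ and the meridian of $A$ kills everything: the core of $A$ is $\lambda$-conjugate, the longitude $\mu_\epsilon$ is a meridian of $P_\epsilon(J)$ hence $\lambda$-conjugate, and each double-point generator is normally generated by a meridian of $A$. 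This gives (1). For (2), once we know $H_1(N)\cong\Z$ generated by $\lambda$ and that $\lambda$ lives in $N \setminus \nu(A)$, the Mayer--Vietoris sequence $H_1(\text{gluing}) \to H_1(N\setminus\nu(A)) \oplus H_1(\nu(A)) \to H_1(N) \to 0$ together with the fact that a meridian of $A$ is nullhomologous in $N$ (it bounds a normal disk to $A$, which we can push off) and that $H_1(\nu(A))$ is generated by the double-point loops, which die in $N$, forces $H_1(N\setminus\nu(A)) \to H_1(N)$ to be an isomorphism (the meridian of $A$ maps to $0$ and is the only ``extra'' class). For (3), the relevant Mayer--Vietoris piece is $H_3(N) \to H_2(\text{gluing}) \to H_2(N\setminus\nu(A)) \oplus H_2(\nu(A)) \to H_2(N) \cong \Z^2$; since $\nu(A)$ is homotopy equivalent to a $1$-complex wedge annulus, $H_2(\nu(A)) = 0$, and the gluing region (circle bundle over the annulus-with-double-points complex) has $H_2$ accounting for the annulus itself (a torus class, giving the $\mu$-framed boundary) plus a class for each double point. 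Chasing the sequence, the image of $H_2(N\setminus\nu(A))$ in $H_2(N)=\Z^2$ is the subgroup generated by $\hat\Sigma_0$ and $\hat\Sigma_1$ minus the contribution along which we cut — but $\hat\Sigma_0$ and $\hat\Sigma_1$ together with $A$ bound (their $\mu_\epsilon$ boundaries are the ends of $A$), so in $N\setminus\nu(A)$ only the difference $\hat\Sigma_0 - \hat\Sigma_1$ survives as a closed surface, generating a $\Z$; the kernel/new classes are exactly the $G$ summand coming from double points (plus relations from $H_3(N)$ and from how the double-point loops in the gluing region map).

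The main obstacle I expect is the careful bookkeeping at the self-intersections of $A$. Each transverse double point of the immersed annulus creates, in the complement, both a new $1$-cycle (a loop linking the local sheet) and interacts with the $2$-dimensional homology via the link of the singular point (locally the complement of two transverse $2$-planes in $\mathbb{R}^4$, whose boundary is $S^3$ with a Hopf link removed). Getting the exact statement --- that $G$ has ``one generator for each self-intersection, with possibly some relations'' --- requires writing down the local model and identifying which of these generators get identified or killed by the global Mayer--Vietoris boundary maps; the hedged phrasing ``possibly some relations'' suggests we only need an upper bound on the rank plus the explicit $\Z$ summand, so I would aim to produce the surjection from $\Z \oplus F$ ($F$ free on the double points) onto the relevant quotient and the injection of the $\Z = \langle \hat\Sigma_0 - \hat\Sigma_1\rangle$ summand, rather than computing $G$ precisely. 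I would also need to verify that $\hat\Sigma_0 - \hat\Sigma_1$ is genuinely nonzero in $H_2(N\setminus\nu(A))$, which follows from its nonzero image in $H_2(N)\cong\Z^2$ under the inclusion-induced map.
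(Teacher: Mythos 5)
Your strategy is a genuinely different route from the paper's. You propose Mayer--Vietoris with the decomposition $N = (N\setminus\nu(A))\cup\nu(A)$, which forces you to understand $H_*(\partial'\nu(A))$ directly (the ``unit normal bundle'' over the singular $2$-complex $A$). The paper instead uses the long exact sequence of the pair $(N, N\setminus\nu(A))$ together with excision, Poincar\'e--Lefschetz duality, and a deformation retraction to reduce the relative groups to $H^{4-*}(A,\partial A)$, which is a direct CW computation on an annulus with $c$ pairs of points identified. That duality step is exactly what trades away the local-model bookkeeping at the self-intersections that you flag as the ``main obstacle'': instead of analyzing the link of each double point, one reads off $H_2(N,N\setminus\nu(A))\cong\Z$ and $H_3(N,N\setminus\nu(A))\cong\Z^{c+1}$, with explicit geometric representatives. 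The two sequences carry the same information (they're linked by excision), so your outline can be made to work, but the paper's formulation is what makes it tractable without case analysis at the singularities.

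Two points where your sketch is slightly off or incomplete. First, you write that ``$\hat\Sigma_0$ and $\hat\Sigma_1$ together with $A$ bound (their $\mu_\epsilon$ boundaries are the ends of $A$)'' --- but the $\hat\Sigma_\epsilon$ are already closed. The correct reason $\hat\Sigma_0-\hat\Sigma_1$ survives is that the map $H_2(N)\to H_2(N,N\setminus\nu(A))\cong\Z$ (or, in your formulation, the restriction to the image in $H_2(N)$ of $H_2(N\setminus\nu(A))$) is algebraic intersection with $A$, and each $\hat\Sigma_\epsilon$ meets $A$ exactly once (in $\mu_\epsilon$), so the kernel is $\langle\hat\Sigma_0-\hat\Sigma_1\rangle$. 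Second, out of the $c+1$ generators of $H_3(N,N\setminus\nu(A))$, one corresponds to the dual of an arc running between the two boundary circles of $A$; its image in $H_2(N\setminus\nu(A))$ is a torus that bounds a copy of $M_J\setminus\nu(\mu_J)$ and therefore dies, leaving only $c$ generators for $G$. Your description of $G$ as coming from the double points is correct in conclusion, but that extra class has to be accounted for explicitly.
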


\begin{proof}
(1) follows from Seifert-van Kampen.

Define $\partial'\nu(A)$ by the following decomposition: $\partial\nu(A) = \nu(\partial A) \sqcup \partial'\nu(A) = \nu(\eta_0)\sqcup\nu(\eta_1)\sqcup\partial'\nu(A)$. Then
\begin{align*}
	H_n(N,N\setminus\nu(A))
		& = H_n(\nu A,\partial'\nu(A))
			& \text{by excision}\\
		& = H^{4-n}(\nu A , \nu(\partial A))
			& \text{by Poincar\'{e}-Lefschetz duality}\\
		& = H^{4-n}(A, \partial A)
			& \text{by a deformation retract}\\
		& = \begin{cases}
			\Z			& n=2\\
			\Z^{c+1}	& n=3\\
			0			& \text{else}
			\end{cases}
			& \begin{multlined}
				\text{where $c$ is the number of}\\
				\text{self-intersections of $A$}
			\end{multlined}
\end{align*}
where the final step comes from finding a CW structure on an annulus with $c$ pairs of points identified and computing the cohomology directly from the cochain complex. The group $H^1(A,\partial(A))$ has one generator corresponding to each self-intersection of $A$, plus one generator which is represented by an arc going from one boundary component to the other. Following through the isomorphisms, we see that the corresponding element of $H_3(N,N \setminus \nu(A))$ is represented by a tubular neighborhood of a loop going around $A$ (this is a Poincar\'{e}-Lefschetz dual of the element in $H^1(A,\partial A)$ since it intersects the arc representing it exactly once), with boundary a torus in $\partial'\nu(A) \subset N\setminus\nu(A)$, which can be taken without loss of generality to be isotopic to the boundary of a tubular neighborhood of either of the $\mu_\epsilon$ by choosing the loop to be a push off of one of the boundary components. Also, the generator of $H_2(N,N\setminus\nu(A))$ is given by a Poincar\'{e}-Lefschetz dual of $A$, since the generator of $H^2(A, \partial A)$ is represented by $A$.

These homology groups then fit in to the long exact sequence for the pair $(N,N\setminus\nu(A))$:

\setlength\mathsurround{0pt}

$$\begin{tikzcd}
		H_3(N, N \setminus \nu(A)) \arrow[r]
			& H_2(N \setminus \nu(A)) \arrow[r] \arrow[d, phantom, ""{coordinate, name=Z}]
			& H_2(N)  \arrow[dll,
									rounded corners,
									to path={ -- ([xshift=2ex]\tikztostart.east) 
									|- (Z) [near end]\tikztonodes
									-| ([xshift=-2ex]\tikztotarget.west)
									-- (\tikztotarget)},
									swap, "\pi"] \\
		H_2(N, N \setminus \nu(A)) \arrow[r]
			& H_1(N \setminus \nu(A)) \arrow[r]
			& H_1(N) \arrow[r]
			& 0
	\end{tikzcd}.$$

\setlength\mathsurround{0.8pt}

Note that the map $\pi : H_2(N) \to H_2(N, N \setminus \nu(A))$ is given by the algebraic intersection number with $A$ since the generator of $H_2(N,N \setminus \nu(A))$ is the Poincar\'{e}-Lefschetz dual of $A$. Since the boundary components of $A$ are the meridians $\mu_\epsilon$ of $J$ in $(M_J \times I)_\epsilon$, and the capped off Seifert surfaces $\hat{\Sigma}_\epsilon$ intersect these meridians once, we see that the images of the classes $\hat{\Sigma}_\epsilon \in H_2(N)$ under $\pi$ are $\pm 1 \in \Z \cong H_2(N, N \setminus \nu(A))$. This shows that $\pi$ is surjective and has kernel $\langle\hat{\Sigma}_0-\hat{\Sigma}_1\rangle$. This gives (2), and the fact that $H_2(N \setminus \nu(A)) \cong \Z \oplus G$, where $G$ is the image of the map $H_3(N,N \setminus \nu(A)) \to H_2(N \setminus \nu(A))$.

A priori, $G$ is some quotient of $\Z^{c+1}$. However, recall that the submanifold representing the extra generator described above (the Poincar\'{e}-Lefschetz dual of an arc running from one boundary component of $A$ to the other) has boundary a torus which can be taken to be isotopic to the boundary of a tubular neighborhood of either $\mu_\epsilon$. This is then nullhomologous in $N\setminus\nu(A)$ since it bounds a copy of $M_J\setminus\nu(\mu_J)$. Therefore, $G$ in fact just has one generator for each self-intersection of $A$, with possibly some relations.
\end{proof}

Notice that $N \setminus \nu(A)$ has two boundary components, with $\partial^+(N \setminus \nu(A)) = M_{P_0(J)\#-P_1(J)}$ unchanged from $\partial^+ N$. We wish to ``plug the hole'' by gluing in some 4-manifold with boundary equal to $\partial^-(N \setminus \nu(A))$. To that end, we will find a surgery description of this 3-manifold.

To start, take an arc in $N$ running from $\mu_0$ to $\mu_1$ along $A$ (recall that the $\mu_\epsilon$ are the copies of the meridian of $J$ living in $M_J\times\{1\}\subset(M_J \times I)_\epsilon$), missing all self-intersections of $A$. If we cut out a neighborhood of this arc from $N$, the new boundary is $M_J \# M_{-J}$, and the remainder of $A$ is an immersed disk bounded by the grey curve in Figure \ref{fig:SimpleBoundary}. When we remove the remainder of $A$, if there were no self-intersections, this would amount to doing 0-surgery on this grey curve. We can account for the self-intersections as in \cite{freedman4ManifoldTopologyII1995}, to get the surgery description shown in Figure \ref{fig:Boundary}. Each self-intersection corresponds to one of the Whitehead links which is banded on to the diagram. The sign of the clasp should match the sign of the self-intersection, but won't affect our discussion so we draw the clasps with indeterminate crossings following the convention in \cite{freedman4ManifoldTopologyII1995}. A priori, the immersed disk bounded by the grey curve in Figure \ref{fig:SimpleBoundary} might induce a nontrivial framing. However, as $A$ is an immersed annulus in $B^4$, the framing difference between the two ends must be twice the algebraic self-intersection number of $A$, similar to how adding a local kink changes the framing of an immersed disk by $\pm 2$. We can thus add local kinks to undo this framing difference, which makes the immersed disk bounded by the grey curve in Figure \ref{fig:SimpleBoundary} 0-framed.

Notice that doing 0-surgery on the grey curve in Figure \ref{fig:SimpleBoundary} produces $M_{J\#-J}$ (as in Figure \ref{fig:Zboundary}). Notice that the 3-manifold doesn't change if we ``blow up'' the clasps as in Figure \ref{fig:Zconstruction} (for now ignore the different labels on the components and instead treat everything as 0-framed). Again, the signs of the clasps match the signs of the corresponding self-intersections of $A$. Now, paying attention to the labels in Figure \ref{fig:Zconstruction}, we see that $\partial^-(N \setminus \nu(A))$ bounds a manifold $Z$ (framings written inside angle brackets denote a surgery description of a 3-manifold, which in this case is $M_{J\#-J}$), with schematic picture shown in Figure \ref{fig:Zconstruction2}.

\begin{figure}
	\begin{center}
	\includegraphics[width=0.5\linewidth]{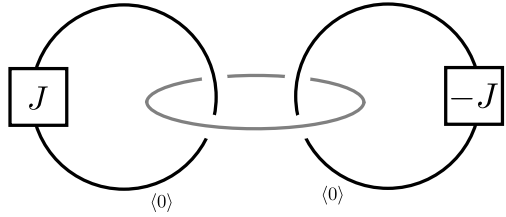}
	\end{center}
	\caption{$\partial^-(N\setminus\nu(\text{arc}))=M_J \# M_{-J}$ with the grey curve bounding an immersed disk obtained by cutting $A$ open along the arc}
	\label{fig:SimpleBoundary}
\end{figure}

\begin{figure}
	\begin{center}
	\includegraphics[width=0.5\linewidth]{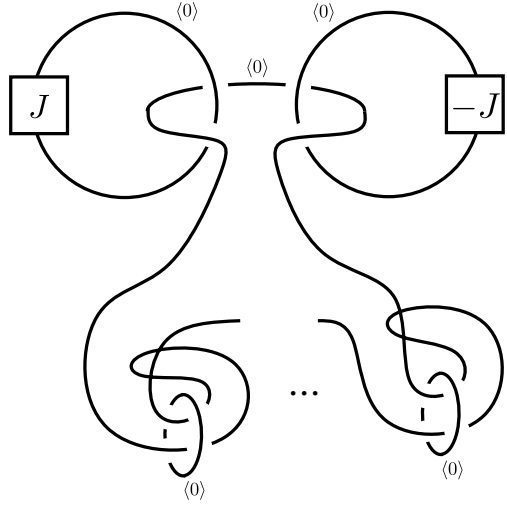}
	\end{center}
	\caption{$\partial^-(N \setminus \nu(A))$ accounting for self-intersections of $A$; the sign of each clasp matches the sign of the corresponding self-intersection}
	\label{fig:Boundary}
\end{figure}

\begin{figure}
	\begin{center}
	\includegraphics[width=0.5\linewidth]{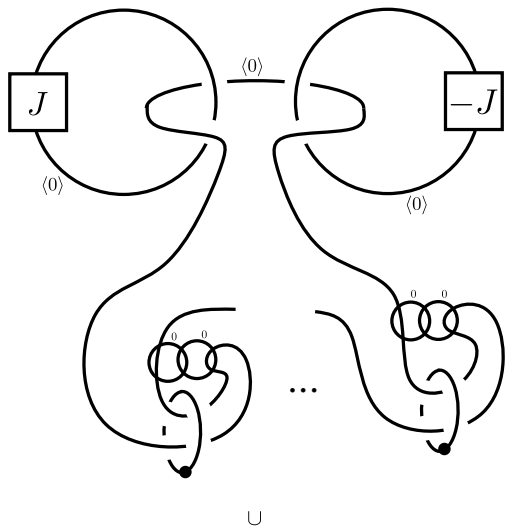}
	\end{center}
	\caption{Construction of $Z$}
	\label{fig:Zconstruction}
\end{figure}

It is straightforward to compute the homology of $Z$ and $\partial Z=\partial^-(N \setminus \nu(A))$, with the results as in the following two lemmas.

\begin{lemma}
\begin{enumerate}
\item $H_1(Z)=\Z^{c+1}$ where $c$ is the number of self-intersections of $A$, and the extra generator is the meridian of the slice disk for $J\#-J$.

\item $H_2(Z)=\Z^{2c}$, and is represented by surfaces which are the unions of: the cores of the 2-handles in $Z$, the homotopies through $M_{J\#-J}\times I$ which consist of undoing the clasps in the attaching circles of the 2-handles in the construction of $Z$, and parallel Seifert surfaces for $J$. See Figure \ref{fig:Zconstruction2} for a schematic picture.
\end{enumerate}
\end{lemma}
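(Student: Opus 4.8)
The plan is to read a handle decomposition of $Z$ off of Figure \ref{fig:Zconstruction2}, compute $H_1(Z)$ and $H_2(Z)$ (e.g.\ by adding the handles one at a time and applying Mayer--Vietoris), and then match the resulting algebraic generators to the geometric surfaces in the statement. Concretely, $Z$ is built on the exterior $X_0 := B^4\setminus\nu(D)$ of a slice disk $D$ for $J\#-J$, so that $\partial X_0 = M_{J\#-J}$, and for each of the $c$ self-intersections of $A$ one attaches one $1$-handle --- coming from the $\langle 0\rangle$-framed circle produced by blowing up that clasp --- and two $2$-handles --- coming from the two components of the blown-up Whitehead link, attached with framing $0$ by the framing correction noted just before the lemma.

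First I would record $H_0(X_0)=\Z$, $H_1(X_0)=\Z$ generated by the meridian $\mu$ of $D$ (equivalently of $J\#-J$; this is the ``meridian of the slice disk'' of the statement), and $H_2(X_0)=0$. Attaching the $c$ one-handles adds a free $\Z$ to $H_1$ each time and leaves $H_2$ unchanged, so after the $1$-handles $H_1=\Z^{c+1}$ and $H_2=0$. It then remains to understand the $2c$ two-handles, and the key point, to be read off the figure, is that each attaching circle $\gamma$ is null-homologous in the current manifold: undoing its clasp by a homotopy supported in a collar $M_{J\#-J}\times I$ of $\partial X_0$ carries $\gamma$ to a curve in $M_{J\#-J}$ which is either a parallel of the grey curve of Figure \ref{fig:SimpleBoundary} (and so bounds a disk) or a $0$-framed longitude of $J$ (and so bounds a Seifert surface for $J$) --- in either case null-homologous. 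Since the attaching framings are $0$ and the $[\gamma]$'s vanish, each $2$-handle contributes a fresh free $\Z$ to $H_2$ and imposes no relation on $H_1$, giving $H_1(Z)=\Z^{c+1}$ and $H_2(Z)=\Z^{2c}$; freeness and the absence of torsion follow because the $2c$ co-cores form a basis of $H_2(Z,\partial Z)$ that pairs with the $2$-handle classes by the identity matrix.

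To identify the generators geometrically: the core of the $2$-handle over $\gamma_i$ is a disk $D_i$ with $\partial D_i = \gamma_i$; gluing to $D_i$ the immersed annulus in the collar $M_{J\#-J}\times I$ that realizes the clasp-undoing homotopy, and then capping the resulting curve by the embedded surface in $M_{J\#-J}$ found above (a disk or a parallel copy of a Seifert surface for $J$, as in the statement), produces the closed surfaces of item (2); as each meets the $i$-th co-core once and no other co-core, they freely generate $H_2(Z)$. By the same bookkeeping the $c$ non-meridional generators of $H_1(Z)$ are the cores of the $1$-handles, i.e.\ the blow-up circles.

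The main obstacle is the clasp bookkeeping rather than any homological algebra. One must verify that blowing up the $i$-th clasp and performing the framing correction yields exactly one $1$-handle and two $0$-framed, null-homologous $2$-handles, so that the ranks $c+1$ and $2c$ are sharp and no torsion or extra $H_1$ appears (it is also convenient to take $D$ ribbon so that $Z$ has no $3$-handles, making the vanishing of the relevant boundary map immediately yield $H_2(Z)=\Z^{2c}$), and one must check that the capping surfaces can be chosen disjointly embedded and of the stated form. Tracing the attaching circles through Figures \ref{fig:SimpleBoundary}, \ref{fig:Zconstruction}, and \ref{fig:Zconstruction2} --- keeping track of which circle is homologous to $\mu$, which becomes a longitude of $J$, and how the blow-up circle links them --- is the crux of the argument.
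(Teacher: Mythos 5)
The paper does not actually prove this lemma; it only says ``It is straightforward to compute the homology of $Z$ and $\partial Z$,'' so there is nothing to compare against verbatim. Your handle-by-handle Mayer--Vietoris computation, starting from the slice-disk exterior $X_0=B^4\setminus\nu(\Delta)$ with $H_1=\Z\langle\mu\rangle$, $H_2=0$, adding the $1$-handles and then the $2$-handles, is exactly the natural way to verify the claim, and the counts $c+1$ and $2c$ come out as stated once the handles are as you describe. Two points to tighten. First, you present the attaching circles as ``either a parallel of the grey curve $\ldots$ or a $0$-framed longitude of $J$,'' but the lemma's description of the $H_2$ generators (core $\cup$ clasp-undoing annulus in the collar $\cup$ parallel Seifert surface for $J$) indicates that \emph{all} $2c$ attaching circles become $0$-framed longitudes of $J$ after the clasp homotopy; there is no disk case, so the ``either/or'' should be dropped. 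Second, and more substantively, the lemma is only true if the $2c$ attaching circles have trivial image in $H_1(X_0\cup\text{$1$-handles})=\Z^{c+1}$; your argument gets this by homotoping each $\gamma$ into $M_{J\#-J}$ and capping with a Seifert surface, which is consistent with the description in item (2), but you should say explicitly that the $\gamma_i$ do not run over the $1$-handles (equivalently, they have zero total linking with the dotted circles), since otherwise attaching the $2$-handles would kill $1$-handle generators and the rank of $H_1(Z)$ would drop. This is precisely the ``clasp bookkeeping'' you flag as the crux, and it should be verified from Figure \ref{fig:Zconstruction}. A small terminological note: the ``cores of the $1$-handles'' are arcs, not loops; the $H_1$ classes are better described as the meridians of the dotted circles produced by the blow-ups. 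Finally, your remark that taking $\Delta$ ribbon makes $X_0$, hence $Z$, free of $3$-handles is consistent with the paper's choice of the usual (ribbon) slice disk for $J\#-J$ and does make the freeness of $H_2(Z)$ immediate.
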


\begin{figure}
	\begin{center}
	\includegraphics[width=\linewidth]{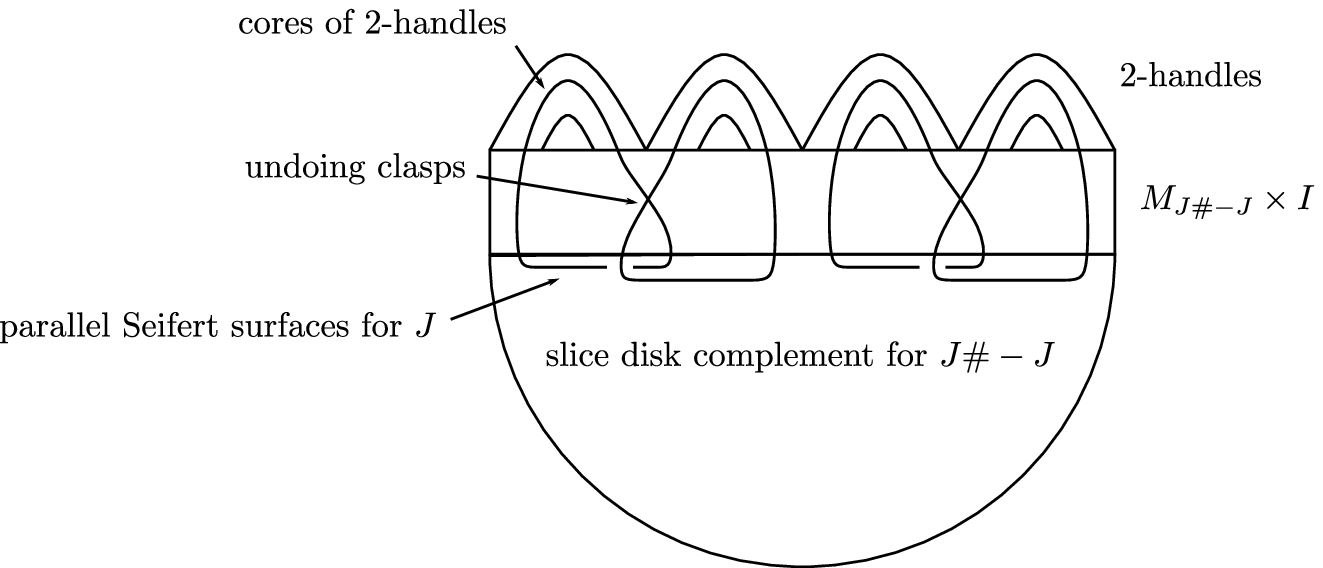}
	\end{center}
	\caption{Schematic picture of $Z$ with embedded surfaces generating $H_2(Z)$ (1-handles are not pictured)}
	\label{fig:Zconstruction2}
\end{figure}

\begin{lemma}
\begin{enumerate}
\item $H_1(\partial Z)=\Z^{c+1}$ (and $H_1(\partial Z) \to H_1(Z)$ is an isomorphism).
\item $H_2(\partial Z)=\Z^{c+1}$ (and $H_2(\partial Z) \to H_2(Z)$ is the zero map). Moreover, $H_2(\partial Z) \to H_2(N \setminus \nu(A))$ takes generators to corresponding generators.
\end{enumerate}
\end{lemma}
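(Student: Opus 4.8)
The plan is to read $H_*(\partial Z)$ off the surgery presentation of $\partial Z=\partial^-(N\setminus\nu(A))$ produced in Figures \ref{fig:SimpleBoundary}--\ref{fig:Zconstruction}, compare with the handle computation of $H_*(Z)$ from the previous lemma, and then upgrade the resulting rank statements to honest isomorphisms using Poincar\'{e}--Lefschetz duality.

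\textbf{Part (1).} The preceding discussion exhibits $\partial Z$ as surgery on an explicit link in $S^3$: the grey curve representing $J\#-J$ (made $0$-framed after the kink corrections) together with the blown-up clasp components coming from the $c$ self-intersections of $A$. Then $H_1(\partial Z)$ is the cokernel of the linking matrix of this link, and a direct computation --- straightforward from the diagram, since each blown-up clasp contributes a unimodular block --- shows it is $\Z^{c+1}$, with basis the meridian $\mu$ of $J\#-J$ together with one meridian $m_i$ for each self-intersection of $A$. By the previous lemma these same classes generate $H_1(Z)=\Z^{c+1}$ (the $m_i$ as the cores of the $1$-handles of $Z$, and $\mu$ as the meridian of the slice disk for $J\#-J$), so $H_1(\partial Z)\to H_1(Z)$ is onto; since a surjection $\Z^{c+1}\to\Z^{c+1}$ is an isomorphism, this gives (1).

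\textbf{Part (2).} Because $\partial Z$ is a closed oriented $3$-manifold, Poincar\'{e} duality and universal coefficients give $H_2(\partial Z)\cong H^1(\partial Z)\cong\Hom(H_1(\partial Z),\Z)=\Z^{c+1}$, using (1) and the fact that $H_1(\partial Z)$ and $H_0(\partial Z)$ are free so there is no $\mathrm{Ext}$ contribution. To see that $H_2(\partial Z)\to H_2(Z)$ vanishes, first note that $H_3(Z)=0$: dualizing (1), the restriction $H^1(Z)\to H^1(\partial Z)$ is an isomorphism, and since $Z$ and $\partial Z$ are connected the cohomology exact sequence of $(Z,\partial Z)$ forces $H^1(Z,\partial Z)=0$, whence $H_3(Z)\cong H^1(Z,\partial Z)=0$ by Lefschetz duality. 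Consequently the connecting map $\partial\colon H_3(Z,\partial Z)\to H_2(\partial Z)$ in the long exact sequence of the pair is injective; under Lefschetz/Poincar\'{e} duality ($H_3(Z,\partial Z)\cong H^1(Z)$ and $H_2(\partial Z)\cong H^1(\partial Z)$) this map is identified, up to sign, with the restriction $H^1(Z)\to H^1(\partial Z)$, which is an isomorphism, so $\partial$ is also onto. By exactness $H_2(\partial Z)\to H_2(Z)$ is the zero map.

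Finally, that $H_2(\partial Z)\to H_2(N\setminus\nu(A))$ takes generators to generators is a matter of tracing the identification $\partial Z=\partial^-(N\setminus\nu(A))$ through the construction of $Z$: the class Poincar\'{e} dual to $\mu$ is the Seifert surface of $J\#-J$ capped off by the meridian disk of the surgery, and in $N\setminus\nu(A)$ the knot $J\#-J$ is the connected sum of the copy of $J$ in $(M_J\times\{1\})_0$ and the copy of $-J$ in $(M_J\times\{1\})_1$ joined along the chosen arc, so this class is carried to $\hat{\Sigma}_0-\hat{\Sigma}_1$, the generator of the $\Z$ summand; and the class dual to $m_i$ is a torus encircling the $i$-th blown-up clasp, which is a torus encircling the $i$-th self-intersection of $A$, i.e.\ the corresponding generator of the $G$ summand of $H_2(N\setminus\nu(A))$. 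I expect the real work here to be exactly this bookkeeping --- pinning down the surgery diagram and its linking matrix (including the self-framing corrections) and matching explicit surface representatives across the gluing $\partial Z=\partial^-(N\setminus\nu(A))$ --- rather than anything conceptual; once the generators are identified, the duality arguments above are routine.
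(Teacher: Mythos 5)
The paper does not actually prove this lemma: it is preceded by the sentence ``It is straightforward to compute the homology of $Z$ and $\partial Z=\partial^-(N \setminus \nu(A))$, with the results as in the following two lemmas,'' so there is no proof in the paper to compare against. Your reconstruction is therefore a blind fill-in of a gap the paper leaves to the reader, and the overall strategy is sound.

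For Part (1), your plan --- read $H_1$ off the surgery/Kirby presentation of $\partial Z$, match generators with those of $H_1(Z)$ exhibited in the preceding lemma, and conclude the inclusion-induced map is an isomorphism --- is correct. The one place where you wave your hands is the claim that ``each blown-up clasp contributes a unimodular block.'' That is exactly the picture-dependent computation the paper calls ``straightforward,'' but be aware it is not a formality you can skip: surjectivity of $H_1(\partial Z)\to H_1(Z)=\Z^{c+1}$ does \emph{not} by itself rule out a torsion summand in $H_1(\partial Z)$, since any torsion would be killed by the map and invisible to a rank count. So the unimodularity (or some other diagram-based argument for freeness) is genuinely load-bearing, not merely decorative, and needs to be verified against Figure \ref{fig:Zconstruction}.

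For Part (2), your duality argument is clean and correct, and arguably slicker than a second diagram computation: Poincar\'{e}--Lefschetz duality turns Part (1) into $i^*\colon H^1(Z)\xrightarrow{\;\cong\;}H^1(\partial Z)$; the cohomology long exact sequence of $(Z,\partial Z)$ together with connectedness then kills $H^1(Z,\partial Z)$ and hence $H_3(Z)$; and the standard duality identification of the connecting map $\partial\colon H_3(Z,\partial Z)\to H_2(\partial Z)$ with $i^*$ makes $\partial$ an isomorphism, forcing $H_2(\partial Z)\to H_2(Z)$ to vanish by exactness. The ``moreover'' clause about generators mapping to generators is, as you say, bookkeeping: your identification of the capped-off Seifert surface with $\hat{\Sigma}_0-\hat{\Sigma}_1$ and of the tori about the blown-up clasps with the $G$-summand generators is consistent with the description of $H_2(N\setminus\nu(A))$ in the preceding lemma and with the description of $\partial Z$ via Figures \ref{fig:SimpleBoundary}--\ref{fig:Zconstruction}. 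That tracing is where the real work lives, as you anticipated; the duality skeleton around it is routine and correctly applied.
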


Now, let $N'=(N \setminus \nu(A)) \cup Z$.

\begin{lemma}
\begin{enumerate}
\item $H_1(N')\cong\Z$ and is generated by $\lambda$.
\item $H_2(Z) \to H_2(N')$ is an isomorphism.
\end{enumerate}
\label{N'Homology}
\end{lemma}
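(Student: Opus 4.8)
The plan is to compute $H_1(N')$ and $H_2(N')$ via a Mayer--Vietoris argument for the decomposition $N' = (N \setminus \nu(A)) \cup_{\partial Z} Z$, using the homology of the three pieces $N \setminus \nu(A)$, $Z$, and $\partial Z$ already recorded in the preceding lemmas. First I would write down the relevant stretch of the Mayer--Vietoris sequence
\[
H_2(\partial Z) \to H_2(N \setminus \nu(A)) \oplus H_2(Z) \to H_2(N') \to H_1(\partial Z) \to H_1(N \setminus \nu(A)) \oplus H_1(Z) \to H_1(N') \to 0,
\]
and feed in the known values: $H_1(\partial Z) \cong \Z^{c+1}$, $H_1(N \setminus \nu(A)) \cong \Z$ (generated by $\lambda$, via the isomorphism $H_1(N \setminus \nu(A)) \xrightarrow{\sim} H_1(N)$), $H_1(Z) \cong \Z^{c+1}$, $H_2(\partial Z) \cong \Z^{c+1}$, $H_2(N \setminus \nu(A)) \cong \Z \oplus G$, and $H_2(Z) \cong \Z^{2c}$.

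**The $H_1$ computation.**
For part (1), the key input is that $H_1(\partial Z) \to H_1(Z)$ is an isomorphism (second lemma on $Z$-homology). I would choose a basis for $H_1(\partial Z) \cong \Z^{c+1}$ adapted to the gluing, exactly in the style of the footnoted argument in the proof of the first lemma: pick the $c+1$ generators so that one maps to a generator of $H_1(N \setminus \nu(A)) \cong \Z$ together with its image in $H_1(Z)$, and the remaining $c$ map to $0$ in $H_1(N \setminus \nu(A))$ but to independent elements of $H_1(Z)$. Since $H_1(\partial Z) \to H_1(Z)$ is already an isomorphism, the map $H_1(\partial Z) \to H_1(N \setminus \nu(A)) \oplus H_1(Z)$ is injective with cokernel generated by the image of a generator of $H_1(N \setminus \nu(A))$, i.e.\ by $\lambda$; hence $H_1(N') \cong \Z$ generated by $\lambda$. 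This also shows the connecting map $H_2(N') \to H_1(\partial Z)$ is zero.

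**The $H_2$ computation and the main obstacle.**
For part (2), the vanishing of that connecting map reduces the sequence to the short exact sequence $0 \to \operatorname{im}(H_2(\partial Z)) \to H_2(N \setminus \nu(A)) \oplus H_2(Z) \to H_2(N') \to 0$. Now I would use the two facts recorded in the lemmas: $H_2(\partial Z) \to H_2(Z)$ is the \emph{zero} map, while $H_2(\partial Z) \to H_2(N \setminus \nu(A))$ sends generators to corresponding generators. The image of $H_2(\partial Z)$ in the direct sum is therefore contained entirely in the $H_2(N \setminus \nu(A)) \cong \Z \oplus G$ factor, and it must surject onto the $\Z$ summand (generated by $\hat\Sigma_0 - \hat\Sigma_1$, which is one of the corresponding generators) and onto $G$ (whose generators, one per self-intersection of $A$, are precisely the images of the $c$ torus classes in $\partial Z$ that die in $Z$). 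Hence the quotient $\bigl(H_2(N \setminus \nu(A)) \oplus H_2(Z)\bigr) / \operatorname{im}(H_2(\partial Z))$ is exactly $H_2(Z)$, and the composite $H_2(Z) \hookrightarrow H_2(N \setminus \nu(A)) \oplus H_2(Z) \twoheadrightarrow H_2(N')$ is an isomorphism. The step I expect to be the most delicate is pinning down exactly that $\operatorname{im}\bigl(H_2(\partial Z) \to H_2(N \setminus \nu(A))\bigr)$ is \emph{all} of $\Z \oplus G$ and not a proper subgroup — i.e.\ matching the $c$ torus generators of $H_2(\partial Z)$ with the $c$ generators of $G$ and the remaining generator with $\hat\Sigma_0 - \hat\Sigma_1$ — which is why the earlier lemma was careful to state that ``$H_2(\partial Z) \to H_2(N \setminus \nu(A))$ takes generators to corresponding generators.'' Once that identification is in hand, the conclusion is a formal diagram chase.
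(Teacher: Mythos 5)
Your proposal is correct and takes essentially the same approach as the paper: a Mayer--Vietoris argument for $N' = (N \setminus \nu(A)) \cup_{\partial Z} Z$, using that $H_1(\partial Z) \to H_1(Z)$ is an isomorphism (to get $H_1(N') \cong \Z$ and kill the connecting map into $H_1(\partial Z)$), that $H_2(\partial Z) \to H_2(Z)$ is zero, and that $H_2(\partial Z) \to H_2(N \setminus \nu(A))$ carries generators onto generators. The paper's proof is terser but makes exactly these identifications; your extra care about why $\operatorname{im}(H_2(\partial Z))$ exhausts $\Z \oplus G$ is precisely what the earlier lemma's phrase ``takes generators to corresponding generators'' is packaging.
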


\setlength\mathsurround{0pt}

\begin{proof}
Consider the Mayer-Vietoris sequence:
	$$\begin{tikzcd}
		H_2(\partial Z) \arrow[r]
			& H_2(Z) \oplus H_2 (N \setminus \nu(A)) \arrow[r] \arrow[d, phantom, ""{coordinate, name=Z}]
			& H_2(N')  \arrow[dll,
											rounded corners,
											to path={ -- ([xshift=2ex]\tikztostart.east)
											|- (Z) [near end]\tikztonodes
											-| ([xshift=-2ex]\tikztotarget.west)
											-- (\tikztotarget)}] \\
		H_1(\partial Z) \arrow[r]
			& H_1(Z) \oplus H_1(N \setminus \nu(A)) \arrow[r]
			& H_1(N') \arrow[r]
			& 0
	\end{tikzcd}.$$

\setlength\mathsurround{0.8pt}

Since $H_1(\partial Z) \to H_1(Z)$ is an isomorphism, we see that $H_1(N \setminus \nu(A)) \to H_1(N')$ is an isomorphism. Composing with isomorphisms from before we get (1). We also see that $H_2(N') \to H_1(\partial Z)$ is the zero map.

Since $H_2(\partial Z) \to H_2(Z)$ is also the zero map, and $H_2(\partial Z) \to H_2(N \setminus \nu(A))$ takes generators to generators, we see that $H_2(Z) \to H_2(N')$ is an isomorphism.
\end{proof}

We now prove Theorem \ref{1solvable}.

\begin{proof}[Proof of Theorem \ref{1solvable}]
By the proof of the previous theorem, $H_2(N')$ is generated by the same surfaces that generate $H_2(Z)$. These surfaces consist of Seifert surfaces which are capped off by 0-framed 2-handles (along with the homotopies that preserve the framing), and so have trivial normal bundle. They intersect geometrically once in pairs, and the generators of their fundamental groups lie on a Seifert surface for $J$, and thus are in the commutator subgroup of $\pi_1(Z)$, which has a natural homomorphism into $\pi_1(N')$. Therefore $N'$ is a (1)-solution for $P_0(J)\#-P_1(J)$.
\end{proof}

\begin{remark}
\label{nSolvable}
If the map on fundamental groups induced by the inclusion from the Seifert surface for $J$ to the slice disk complement for $J\#-J$ has image in the $n$th term of the derived series, then $N'$ would be an $(n)$-solution for $P_0(J)\#-P_1(J)$. We did not have to use the usual slice disk for $J\#-J$, and one might be able to get a better result by choosing a different slice disk for $J\#-J$. In Section \ref{SlicenessConditions}, we will give even stronger conditions that would guarantee that $P_0(J)\#-P_1(J)$ is topologically slice.
\end{remark}

Given a slice knot, one might wonder if it is ribbon:

\begin{definition}
	A (smoothly) slice knot $K$ is {\it ribbon} if it has a (smooth) slice disk $\Delta$ such that the restriction of radial Morse function on $B^4$ to $\Delta$ has only index 0 and 1 critical points.
\end{definition}

This gives rise to the following famous conjecture:

\begin{conj}[Slice-Ribbon Conjecture]
	Every slice knot is ribbon.
\end{conj}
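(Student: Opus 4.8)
The Slice--Ribbon Conjecture is one of the best-known open problems in low-dimensional topology, so the useful content of a proposal here is to locate the obstruction rather than to claim a route to the proof. The plan is the naive one: given a slice knot $K$ with a smooth slice disk $\Delta\subset B^4$, put the restriction of the radial Morse function to $\Delta$ in self-indexed Morse position, then try to remove every index $2$ critical point. First I would cancel all but one of the index $0$ critical points against index $1$ critical points (always possible for minima); since $\chi(\Delta)=1$, the numbers of index $1$ and index $2$ critical points are then equal. Next I would try to cancel the index $1$/index $2$ pairs one at a time: this amounts to isotoping $\Delta$ so that the circle a given maximum caps off is the circle created by a nearby saddle, with the connecting band slidable off, so that the critical pair cancels. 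If this can be completed, $\Delta$ has only index $0$ and $1$ critical points and $K$ is ribbon.

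The hard part is exactly the cancellation step, which is the Andrews--Curtis conjecture in disguise: the data recording how the saddles and maxima interact amounts to a balanced presentation of $\pi_1$ of the slice disk exterior, which need not be infinite cyclic, and rearranging it to achieve geometric cancellation of the pairs would require reducing that presentation to the trivial one by Andrews--Curtis moves, for which no general mechanism is known. Phrased geometrically, one is trying to replace a smoothly embedded $2$-disk in a $4$-manifold by one compatible with a prescribed handle decomposition, and homotopy of $2$-disks in dimension $4$ is far weaker than isotopy, with no smooth Whitney trick available to close the gap. So this route does not produce a proof for general $K$, and no other route is currently known.

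What is realistic --- and underlies every known positive case --- is to restrict to families where sliceness can be strongly obstructed through $4$-manifolds bounded by the double branched cover $\Sigma_2(K)$ or the zero-surgery $M_K$. I would take a family such as $2$-bridge, pretzel, or Montesinos knots; use that slice $K$ forces $\Sigma_2(K)$ to bound a rational homology ball, and apply Donaldson's diagonalization theorem together with Heegaard Floer $d$-invariants (following Lisca, Greene--Jabuka, and others) to pin down exactly which members of the family can be slice; then write down explicit band presentations exhibiting each survivor as ribbon. The obstacle here is the very first clause: there is still no way to convert an arbitrary slice disk into a ribbon one, so this program settles the conjecture only in ranges where the obstruction machinery is sharp enough that the knots left standing are already manifestly ribbon.
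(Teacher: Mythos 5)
This statement is the Slice--Ribbon Conjecture, one of the central open problems in the field. The paper does not prove it and does not claim to; it is stated as a \emph{conjecture} purely to motivate the subsequent definitions of homotopy ribbon, the topological analogue of the conjecture, and the notion of a homotopy ribbon $(h)$-solution used in Proposition \ref{HomotopyRibbon1Solvable}. There is therefore no paper proof to compare against. You correctly recognize that the statement is open, and your discussion of the obstruction is accurate: after cancelling minima, the remaining $1$/$2$ handle cancellation problem for the slice disk has the flavor of an Andrews--Curtis problem, and homotopy of embedded disks in a $4$-manifold is far weaker than isotopy, so the naive route stalls. Your survey of the positive cases (double branched covers bounding rational homology balls, Donaldson diagonalization, $d$-invariants \`a la Lisca and Greene--Jabuka, then exhibiting the survivors as ribbon by explicit bands) is also a fair description of how the known verifications actually go. In short: nothing to fix, because there is nothing here to prove --- the appropriate reading of the prompt is that the paper is recording a conjecture, and you have responded in kind.
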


One can weaken ribbonness to the following algebraic topological condition, which makes sense in both the smooth and topological settings (unlike ribbonness, which relies on a Morse function).

\begin{definition}
	A (smoothly/topologically) slice knot is \it{(smoothly/topologicaly) homotopy ribbon} if it bounds a (smooth/topological) slice disk $\Delta$ such that the inclusion-induced map $\pi_1(M_K)\to\pi_1(B^4\setminus\nu(\Delta))$ is surjective.
\end{definition}

Every ribbon knot is homotopy ribbon. This gives rise to the topological analogue of the Slice-Ribbon Conjecture:

\begin{conj}
	Every topologically slice knot is topologically homotopy ribbon.
\end{conj}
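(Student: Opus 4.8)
The statement is the topological analogue of the Slice--Ribbon Conjecture, so rather than an actual proof I can only describe the shape an argument would have to take and indicate where I expect it to break down. The natural first move is Proposition \ref{TopSliceCondition}: a topologically slice knot $K$ has $M_K=\partial W$ for a topological $4$-manifold $W$ with $\pi_1(W)$ normally generated by the meridian $\mu$, with $H_1(W)\cong\Z$, and with $H_2(W)=0$. One checks, in the standard way, that any such $W$ is \emph{automatically} the exterior of a topological slice disk for $K$ provided one can additionally arrange $\pi_1(W)\cong\Z$: capping $\partial W=M_K$ off to recover $S^3$ produces a homotopy $4$-ball, homeomorphic to $B^4$ by Freedman, in which one reads off a slice disk $\Delta'$ with $B^4\setminus\nu(\Delta')\cong W$. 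Crucially, if $\pi_1(W)\cong\Z$ then the generator $\mu$ already lies in the image of $\pi_1(M_K)\to\pi_1(W)$, so that map is surjective and $\Delta'$ is homotopy ribbon. Thus the conjecture reduces to: \emph{every topologically slice knot bounds a topological slice disk whose exterior has fundamental group $\Z$.}

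The plan would then be to start from the $W$ provided by Proposition \ref{TopSliceCondition} and perform $\pi_1$-surgery to kill $\pi_1(W)^{(1)}=\ker(\pi_1(W)\to\Z)$, which is finitely normally generated in $\pi_1(W)$ since $\pi_1(W)$ is finitely generated and $\Z$ is finitely presented. Concretely: choose embedded circles $\gamma_i\subset W$ (automatically with trivial normal bundle, $W$ being orientable) normally generating $\pi_1(W)^{(1)}$ and surger them; each surgery trades $S^1\times D^3$ for $D^2\times S^2$, killing the desired $\pi_1$ class but introducing a new class in $H_2$. To restore $H_2(W)=0$ one must find embedded dual disks (or spheres) capping off these classes and surger those in turn, after which a Mayer--Vietoris and van Kampen bookkeeping in the spirit of the computations in Section \ref{Construction} should yield $\pi_1\cong\Z$ and $H_2=0$ as required. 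The prototype is the Freedman--Quinn proof that Alexander-polynomial-one knots are topologically slice \cite{freedmanTopology4manifolds1990}, where a $W$ with $\pi_1(W)\cong\Z$ is produced directly; the goal here is to run a relative version of that surgery machine starting from an arbitrary topological slice-disk exterior.

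The main obstacle --- and the reason the statement is genuinely open --- is the dual-disk step: producing the embedded capping disks requires the disk embedding theorem, which applies only when the ambient fundamental group is \emph{good} in the sense of Freedman--Quinn, and we have essentially no control over $\pi_1(W)$ for a general topologically slice knot beyond the fact that $\mu$ normally generates it. There is no known mechanism forcing $W$ (or any surgered modification of it) to have good, let alone abelian, fundamental group; a topologically slice knot all of whose slice-disk exteriors were forced to carry a bad fundamental group would be a counterexample, and excluding this possibility appears out of reach of present technology. Accordingly I would expect this conjecture to be at least as hard as the relevant ``good group'' problem, and it is in any case implied by --- hence no easier than --- the topological Slice--Ribbon Conjecture, since ribbon disks are homotopy ribbon. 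A realistic intermediate target, reachable with the methods of this paper, is to verify the conjecture for specific families, for instance knots already known to bound disks with abelian or solvable exterior group; the general case remains open.
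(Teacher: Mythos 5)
This statement is a conjecture in the paper, recorded without proof (it is the topological analogue of the Slice--Ribbon Conjecture, introduced immediately after the observation that every ribbon knot is homotopy ribbon). There is therefore no ``paper's own proof'' to compare against, and you were right not to manufacture one: recognizing the statement as open and sketching where an attempted argument would founder is the correct response.

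Your analysis of the difficulty is essentially sound, with one inaccuracy worth flagging. You claim the conjecture ``reduces to'' finding a slice disk whose exterior has fundamental group $\Z$. That is only a sufficient condition, not an equivalent reformulation: the definition of homotopy ribbon requires only that $\pi_1(M_K)\to\pi_1(B^4\setminus\nu\Delta)$ be surjective, and surjectivity certainly does not force the exterior group to be $\Z$ --- genuine ribbon disk exteriors typically have nonabelian, indeed highly non-residually-nilpotent, fundamental groups. So aiming for $\pi_1\cong\Z$ is strictly harder than the conjecture; it is closer to the ($\Z$-)disk embedding regime of Freedman--Quinn than to homotopy ribbonness per se. Your identification of the good-group hypothesis in the disk embedding theorem as the operative obstruction is the right thing to point at, and your observation that the conjecture is implied by (hence no easier than) the topological Slice--Ribbon Conjecture is correct and is exactly how the paper positions it.
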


Smooth homotopy ribbonness can be extended to $(n)$-solutions:

\begin{definition}
	An $(h)$-solvable knot (for $h\in\frac{1}{2}\N$) is \it{homotopy ribbon $(h)$-solvable} if it bounds an $(h)$-solution $W$ such that the inclusion map $\pi_1(M_K) \to \pi_1(W)$ is surjective. The manifold $W$ is then called a \it{homotopy ribbon $(h)$-solution}.
\end{definition}

Note that the homotopy ribbon $(h)$-solution must be a smooth manifold.

The $(1)$-solution we have constructed for $P_0(J)\#-P_1(J)$ is in fact a homotopy ribbon $(1)$-solution when $K_0\#-K_1$ is smoothly homotopy ribbon:

\begin{prop}
\label{HomotopyRibbon1Solvable}
In the notation of Definition \ref{HomotopicallyRelated} and Theorem \ref{1solvable}, if $K_0\#-K_1$ is smoothly homotopy ribbon via the disk $C\setminus(\text{an arc running from $K_0$ to $K_1$})$ (where $C$ is the concordance between $K_0$ and $K_1$), then the $(1)$-solution $N'$ for $P_0(J)\#-P_1(J)$ constructed in Theorem \ref{1solvable} is in fact a homotopy riboon $(1)$-solution.
\end{prop}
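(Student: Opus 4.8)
The plan is to trace through the construction of $N'$ and show that, under the homotopy ribbon hypothesis on $K_0 \# -K_1$, the inclusion-induced map $\pi_1(M_{P_0(J)\#-P_1(J)}) \to \pi_1(N')$ is surjective. The key observation is that the first lemma already tells us $\pi_1(N)$ is normally generated by the meridian $\lambda$, which is freely homotopic (hence conjugate) to a meridian of $P_0(J)$ living on the boundary component $M_{P_0(J)\#-P_1(J)}$. So surjectivity up to conjugacy is automatic from the normal generation statements; the work is in upgrading ``normally generated'' to ``generated.'' First I would observe that the hypothesis that $C \setminus (\text{arc})$ exhibits $K_0 \# -K_1$ as homotopy ribbon says precisely that $\pi_1(M_{K_0 \# -K_1}) \to \pi_1(W)$ is surjective (recall $W = (S^3 \times I)\setminus\nu(C)$ is the exterior, and removing the arc turns the $\partial^+$ boundary into $M_{K_0 \# -K_1}$ while $W$ itself is the slice disk complement). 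Thus every element of $\pi_1(W)$ is represented by a loop in $\partial^+ W$, which survives into $\partial^+ N = M_{P_0(J)\#-P_1(J)}$.

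Next I would push this surjectivity through the Seifert--van Kampen decomposition $\pi_1(N) = \pi_1(W) *_{\langle\langle \eta_0 \mu_0^{-1}\rangle\rangle} \pi_1(M_J\times I) *_{\langle\langle \eta_1\mu_1^{-1}\rangle\rangle} \pi_1(M_J \times I)$ from Lemma 3.1. Each $\pi_1((M_J\times I)_\epsilon) = \pi_1(M_J)$ is normally generated by $\mu_\epsilon$, which under the gluing is identified with $\eta_\epsilon \subset W$, and hence every element of $\pi_1(M_J)$ is conjugate to a word in meridians of $J$ — but I need generation, not normal generation. Here I would use that $\pi_1(M_J)$ is in fact generated by conjugates of $\mu_J$ by elements that can themselves be pushed into $W$ via the amalgamation: more carefully, any loop in $(M_J\times I)_\epsilon$ can be freely homotoped into a neighborhood of a Seifert surface $\hat\Sigma_\epsilon$ and then... actually the cleanest route is to note that $N$ deformation retracts in a way that collapses the $M_J \times I$ pieces onto $W$ union the capped Seifert surfaces is not quite right either. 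Instead, I would argue directly: since $\pi_1(N)$ is generated by $\pi_1(W)$ together with $\pi_1((M_J\times I)_0)$ and $\pi_1((M_J\times I)_1)$, and each of the latter two is already generated (not just normally) by meridians $\mu_J$ conjugated by elements of $\pi_1(M_J)$ — and one checks inductively that those conjugators, being words in meridians, themselves map to loops in $\partial^+$ — it suffices to know the meridians $\mu_\epsilon = \eta_\epsilon$ lie in the image of $\pi_1(\partial^+ N)$. Since $\eta_\epsilon$ is freely homotopic in $N$ to a meridian of $K_\epsilon \subset \partial^+ W$, it is conjugate to an element in the image of $\pi_1(\partial^+ N)$, and using that this image is normal (being all of $\pi_1(N)$ up to the homotopy ribbon surjectivity from $W$) we conclude $\pi_1(\partial^+N) \to \pi_1(N)$ is surjective.

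Finally I would handle the two remaining modifications: deleting $\nu(A)$ and gluing in $Z$. For $N \setminus \nu(A)$: by Lemma 3.2(1), $\pi_1(N\setminus\nu(A))$ is normally generated by $\lambda$ and a meridian of $A$; the meridian of $A$ bounds a disk in $\nu(A)$, so it dies in $\pi_1(N)$, but I need to see it is killed already by loops in $\partial^+(N\setminus\nu(A)) = M_{P_0(J)\#-P_1(J)}$, which is untouched. In fact a meridian of $A$ is freely homotopic in $N\setminus\nu(A)$ to a meridian of $\mu_\epsilon$, which lies on the $M_J \# M_{-J}$-type boundary, not $\partial^+$; so I would instead observe that $\pi_1(N\setminus\nu(A)) \to \pi_1(N)$ is surjective with kernel normally generated by a meridian of $A$, and that the composition $\pi_1(\partial^+(N\setminus\nu(A))) \to \pi_1(N\setminus\nu(A)) \to \pi_1(N)$ is surjective by the previous paragraph; then gluing in $Z$ can only kill the meridian-of-$A$ class (since $\partial Z = \partial^-(N\setminus\nu(A))$ and $\pi_1(Z)$ receives $\pi_1(\partial Z)$), so $\pi_1(\partial^+(N\setminus\nu(A))) \to \pi_1(N')$ lands on everything that $\pi_1(N\setminus\nu(A)) \to \pi_1(N')$ hits, which is all of $\pi_1(N')$. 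The main obstacle, and the step I expect to require the most care, is precisely the bookkeeping in the second paragraph: converting the ``normally generated'' statements of Lemma 3.1 into genuine generation, i.e. checking that the conjugating elements needed to express arbitrary elements of the $\pi_1(M_J \times I)$ factors in terms of meridians can themselves be represented by loops coming from $\partial^+$; this is where the homotopy ribbon hypothesis on $K_0 \# -K_1$ is genuinely used and where one must be careful not to conflate conjugacy with equality in $\pi_1$.

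\begin{proof}[Proof sketch of Proposition \ref{HomotopyRibbon1Solvable}]
We must show $\pi_1(M_{P_0(J)\#-P_1(J)}) \to \pi_1(N')$ is surjective. By hypothesis $C \setminus (\text{arc})$ is a slice disk for $K_0\#-K_1$ whose complement is $W$, and homotopy ribbonness means $\pi_1(M_{K_0\#-K_1}) \to \pi_1(W)$ is surjective; in particular every element of $\pi_1(W)$ is represented by a loop in $\partial^+ W \subset \partial^+ N$. By the Seifert--van Kampen description in Lemma 3.1, $\pi_1(N)$ is generated by the images of $\pi_1(W)$ and of the two copies $\pi_1((M_J \times I)_\epsilon) \cong \pi_1(M_J)$. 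Each $\pi_1(M_J)$ is generated by conjugates of the meridian $\mu_J$ by words in meridians, and under the gluing $\mu_\epsilon = \eta_\epsilon$ is freely homotopic in $N$ to a meridian of $K_\epsilon$, hence conjugate into the image of $\pi_1(\partial^+N)$; since that image contains all of $\pi_1(W)$ (by homotopy ribbonness) it is already normal, so it absorbs all these conjugates and equals $\pi_1(N)$. Thus $\pi_1(\partial^+ N) \to \pi_1(N)$ is surjective. Removing $\nu(A)$ only enlarges $\pi_1$ by a normally-generating meridian of $A$ (Lemma 3.2(1)) while leaving $\partial^+$ unchanged, and gluing in $Z$ along $\partial^-(N\setminus\nu(A))$ kills that meridian class (as $\pi_1(\partial Z) \to \pi_1(Z)$ surjects onto everything relevant and the meridian of $A$ bounds in $Z$), so the composite $\pi_1(\partial^+N) \to \pi_1(N\setminus\nu(A)) \to \pi_1(N')$ is still surjective. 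Hence $N'$ is a homotopy ribbon $(1)$-solution.
\end{proof}
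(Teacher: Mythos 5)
Your overall strategy differs from the paper's: rather than homotoping a loop piecewise into $\partial N'$ using Lemma \ref{RibbonManifoldArcs} (a geometric ``push arcs to the boundary'' argument, applied first to arcs meeting $Z$, then to arcs in $W\setminus\nu(A)$), you argue group-theoretically via Seifert--van Kampen and then propagate surjectivity through the removal of $\nu(A)$ and the gluing of $Z$. That strategy is viable, and your handling of the last two steps is correct in outline once one observes that $\pi_1(N')$ is a quotient of $\pi_1(N)$ because a meridian of $A$ bounds a disk in $Z$, so surjectivity of $\pi_1(\partial^+N)\to\pi_1(N)$ passes to $\pi_1(N')$.

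The gap is in the middle step. You assert that $H:=\im\bigl(\pi_1(\partial^+N)\to\pi_1(N)\bigr)$ ``is already normal'' because it contains $\im(\pi_1(W))$, and you use normality to absorb the conjugates generating $\pi_1((M_J\times I)_\epsilon)$. That inference is unjustified: a subgroup of $\pi_1(N)$ is not normal merely because it contains $\im(\pi_1(W))$; establishing normality of $H$ is essentially the same as proving $H=\pi_1(N)$, so the argument is circular, as you suspected in your exploratory paragraph. The repair is to drop the normality claim entirely and note that $\pi_1(M_J)$ is genuinely generated (not just normally generated) by meridians of $J$, e.g.\ via the Wirtinger presentation of $\pi_1(S^3\setminus J)$ followed by the surjection $\pi_1(S^3\setminus J)\twoheadrightarrow\pi_1(M_J)$. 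Each such meridian --- not only $\mu_\epsilon$ --- lies in the knot exterior $E_J\subset(M_J\times\{0\})_\epsilon$, which is contained in $\partial^+N$ after the gluing. Hence every generator of $\pi_1((M_J\times I)_\epsilon)$ already lands in $H$, and together with $\im(\pi_1(W))\subseteq H$ (which is where the homotopy ribbon hypothesis enters) this gives $H=\pi_1(N)$ with no appeal to normality. With that repair your approach works; the paper's Lemma \ref{RibbonManifoldArcs} argument is arguably less delicate because the arc-pushing homotopy automatically keeps track of the basepoint and conjugator bookkeeping that is easy to mishandle in the purely group-theoretic version.
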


Before proving this proposition, we will need the following lemma:

\begin{lemma}
\label{RibbonManifoldArcs}
Given a 4-manifold $W$ with connected boundary $M$ such that the inclusion map $\pi_1(M) \to \pi_1(W)$ is surjective, any arc $\alpha$ in $W$ with both ends in $M$ may be homotoped to an arc in $M$ fixing the endpoints of the arc.
\end{lemma}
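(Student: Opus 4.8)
The plan is to reduce the statement to a standard fact about homotopies in a space with surjective $\pi_1$. First I would close up the arc $\alpha$ into a loop: pick a path $\beta$ in $M$ joining the two endpoints of $\alpha$ (possible since $M$ is connected), so that $\gamma := \alpha * \beta^{-1}$ is a loop in $W$. By the surjectivity hypothesis, the class $[\gamma] \in \pi_1(W)$ (based at an endpoint) is represented by a loop $\delta$ lying entirely in $M$; choose a free homotopy, or better a based homotopy after conjugating appropriately, realizing $\gamma \simeq \delta$ rel basepoint in $W$. Then $\alpha \simeq \delta * \beta$ rel endpoints in $W$, and $\delta * \beta$ is a path in $M$. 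This already gives the conclusion up to homotopy rel endpoints.

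The one subtlety is bookkeeping of basepoints: $\alpha$ has two distinct endpoints $p, q \in M$, so "loop" must be interpreted after choosing the auxiliary path $\beta \subset M$ from $q$ to $p$. Since $M$ is path-connected, $\pi_1(W, p)$ based at $p$ still surjects from $\pi_1(M, p)$ (the hypothesis is stated for the inclusion $M \hookrightarrow W$ and is basepoint-independent up to isomorphism because both spaces are path-connected, using that $W$ is connected as it has connected boundary and—if needed—one restricts attention to the relevant component, though here $W$ is connected). So $[\alpha * \beta^{-1}] \in \pi_1(W, p)$ lies in the image of $\pi_1(M, p)$, giving a loop $\delta$ in $M$ based at $p$ with $\delta \simeq \alpha * \beta^{-1}$ rel $p$ in $W$. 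Concatenating with $\beta$ on the right yields $\alpha \simeq \delta * \beta$ rel $\{p, q\}$ in $W$, and $\delta * \beta$ is supported in $M$, as desired.

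I would then remark that this is genuinely a homotopy of the arc rel endpoints (not an ambient isotopy), which is all that is used in the application. The main—and really only—obstacle is the purely formal one of handling the two endpoints carefully when passing between the "arc" and "loop" pictures; there is no geometric or analytic difficulty, as everything is on the level of $\pi_1$ and path concatenation.
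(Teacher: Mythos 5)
Your proof is correct and takes essentially the same approach as the paper: close the arc into a based loop using an auxiliary path in $M$, apply surjectivity of $\pi_1(M)\to\pi_1(W)$ to replace that loop by a based-homotopic loop in $M$, and then unwind the concatenation to get a homotopy of $\alpha$ rel endpoints into $M$. The only cosmetic difference is that the paper chooses an independent basepoint with two whiskers $v,w$ to the endpoints of $\alpha$, while you take the basepoint to be one endpoint and use a single connecting path $\beta$; the two bookkeeping conventions are interchangeable.
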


\begin{proof}
Choose the basepoint of both $\pi_1(M)$ and $\pi_1(W)$ to be a point $p \in M$. Choose whiskers $v,w$ from the basepoint to each of the ends of $\alpha$ inside $M$. This gives a based loop $\gamma = v \alpha w^{-1}$. The loop $\gamma$ is then homotopic (rel basepoint) to a loop $\gamma'$ lying entirely in $\partial W = M$, ie there is a continuous map from an annulus to $W$ which on the boundary is $\gamma \cup \gamma'$. We can then see that the arc $v^{-1} \gamma' w$, is homotopic rel boundary to $\alpha$, by homotoping along the map from an annulus (See Figure \ref{fig:RibbonMfldArc}).
\end{proof}

\begin{figure}
	\begin{center}
	\includegraphics{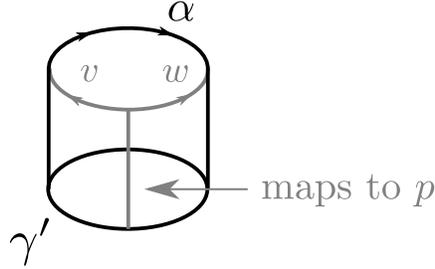}
	\end{center}
	\caption{The arc $\alpha$ is homotopic fixing endpoints to the arc travelling along one whisker, then traversing $\gamma'$, and finally travelling back along the other whisker.}
	\label{fig:RibbonMfldArc}
\end{figure}

\begin{proof}[Proof of Proposition \ref{HomotopyRibbon1Solvable}]
Recall that $N'=(N \setminus \nu(A)) \cup Z$, where a schematic for $N$ is depicted in Figure \ref{fig:NConstruction}, a partial Kirby diagram for $Z$ is depicted in Figure \ref{fig:Zconstruction}, and a schematic for $Z$ is depicted in Figure \ref{fig:Zconstruction2}. Take any based loop $\gamma \in \pi_1(N')$. We will homotope this curve to lie entirely in $\pi_1(M_{P_0(J)\#-P_1(J)})$, and abusing notation, will also denote each step along the way by $\gamma$. Without loss of generality, the intersection of this curve with $Z$ is a collection of arcs.

Since $Z$ consists of the complement of a ribbon disk for $J\#-J$, with some 1- and 2-handles attached, one can see that $\pi_1(\partial Z) \to \pi_1(Z)$ is surjective. This can be seen since inclusion-induced map from the boundary of a ribbon disk complement is already surjective, and a standard argument shows that attaching 1- and 2-handles will preserve surjectivity. Therefore, using Lemma \ref{RibbonManifoldArcs}, each of the arcs may be homotoped fixing their endpoints to lie in $\partial Z$, and then pushed off to lie entirely in $N \setminus \nu(A)$. The intersection of $\gamma$ with the $(M_J \times I)_\epsilon$ can pushed straight outward to $(M_J\times\{0\})_\epsilon$. Since the attaching regions between $W$ and the $(M_J \times I)_\epsilon$ are neighborhoods of curves, we can perturb $\gamma$ slightly so that it does not intersect them. At this point $\gamma$ lies entirely in $(W \setminus \nu(A)) \cup \partial N'$.

Now, the intersections of $\gamma$ with $W \setminus \nu(A)$ without loss of generality again consist of a collection of arcs. Since $K_0\#-K_1$ is homotopy ribbon, these arcs could be homotoped to $\partial W$, again using Lemma \ref{RibbonManifoldArcs} (since $(S^3 \times I)\setminus\nu(C)$ is homeomorphic to the exterior of a homotopy ribbon disk for $K_0\#-K_1$), but might hit $A$ along the way. However, this is no problem, as a meridian of $A$ can be homotoped to curve in $\partial Z$ which isotopic through $Z$ to the attaching curve of any of the 2-handles (it is a meridian of unknotted curve with framing $\langle 0 \rangle$), and so in particular bounds a disk. Therefore, we can replace neighborhoods of potential intersections with $A$ with these disks, thus achieving the desired homotopy.

So, now $\gamma$ lies entirely in $\partial(N')=M_{P_0(J)\#-P_1(J)}$, and we are done.
\end{proof}

\begin{remark}
	Note that this proof holds as long as we use a smooth homotopy ribbon disk for $J\#-J$ in the proof of Theorem \ref{1solvable}. In particular, as long as we use a (smooth) homotopy ribbon disk for $J\#-J$, Remark \ref{nSolvable} will still apply. Moreover, Proposition \ref{SliceGuarantee} below will still apply even if we only use a topological homotopy ribbon disk for $J \# -J$, though it is worth noting that this involves an application of the Sphere Embedding Theorem up to $s$-cobordism with a $\pi_1$-null condition and topological input. See Theorem 6.1 of \cite{freedmanTopology4manifolds1990}, as well as the discussion at the beginning of Chapter 5, and Sections 5.2 and 5.3.
\end{remark}

\section{Some conditions that would guarantee topological sliceness}
\label{SlicenessConditions}

\begin{prop}
\label{SliceGuarantee}
If there is a smooth or topological slice disk complement for $J\#-J$ in which any number of parallel longitudes of $J$ bound disjoint framed $\pi_1$-null immersed disks with algebraically trivial self-intersections, then $P_0(J)\#-P_1(J)$ is topologically slice.
\end{prop}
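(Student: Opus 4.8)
The plan is to upgrade the smooth $(1)$-solution $N'$ built in Section \ref{Construction} to a (smooth or topological) exterior of a slice disk for $P_0(J)\#-P_1(J)$ and then apply Proposition \ref{TopSliceCondition}. The only obstruction in $N'$ is that the surfaces generating $H_2(N')$ carry the positive genus of Seifert surfaces for $J$. The idea is to use the hypothesized immersed disks to trade each such Seifert surface for an immersed disk, turning the generators into $\pi_1$-null immersed spheres; then to promote those spheres to embedded spheres via the Sphere Embedding Theorem in its $\pi_1$-null form, at the cost of passing to an $s$-cobordant manifold; and finally to surger the resulting embedded spheres away, as with a spherical Lagrangian in Section \ref{Background}.

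First I would rerun the construction of Section \ref{Construction} using the given slice disk $\Delta$ for $J\#-J$ in place of the usual one; this is legitimate by the remark following Proposition \ref{HomotopyRibbon1Solvable}, and the arguments through Lemma \ref{N'Homology} go through unchanged. One checks, as there, that the resulting $4$-manifold $N'$ has $\partial N' = M_{P_0(J)\#-P_1(J)}$, that $H_1(N')\cong\Z$ and $\pi_1(N')$ are normally generated by the meridian $\lambda$, and that $H_2(N')\cong\Z^{2c}$ carries a unimodular hyperbolic form with a basis of embedded surfaces occurring in $c$ dual pairs $\{S_i,T_i\}$, $S_i\cdot T_i=\pm1$, where each surface has the form $\Sigma\cup E$: here $\Sigma$ is a parallel Seifert surface (or surfaces) for $J$ carrying all of the genus, bounded by a longitude $\lambda_J$ of the $J$-summand, and $E$ is an embedded disk obtained by capping $\lambda_J$ off with a clasp-undoing homotopy in $M_{J\#-J}\times I$ followed by the core of a $0$-framed $2$-handle of $Z$. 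The longitudes $\lambda_J$ arising this way — finitely many across the $S_i$ and $T_i$ — all lie in the copy of the exterior of $\Delta$ sitting inside $Z\subseteq N'$. Invoking the hypothesis, I would take disjoint framed $\pi_1$-null immersed disks $D$ bounded by these longitudes with algebraically trivial self-intersections, and replace each $\Sigma$ by the corresponding $D$, arranging by small isotopies that the $D$'s are disjoint from the $E$-parts and that each intersection point $S_i\cap T_i$ lies on an $E$-part. The outcome is immersed spheres $\widehat S_i=D\cup E$ and $\widehat T_i=D'\cup E'$ with $\widehat S_i\cdot\widehat T_i=\pm1$ geometrically and the pairs otherwise disjoint; with $[\widehat S_i]=[S_i]$ and $[\widehat T_i]=[T_i]$ in $H_2(N')$ (since $[\Sigma\cup\bar D]$ pairs trivially with each $S_j,T_j$ and the form is unimodular); with algebraically trivial self-intersections coming only from the $D$'s, the $E$-parts being embeddable; with trivial relative normal framing, the $D$'s being framed compatibly with the $0$-framing of $\lambda_J$; and $\pi_1$-null in $N'$, because the double-point loops all come from the $\pi_1$-null $D$'s and hence are nullhomotopic already in the exterior of $\Delta$.

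Now each $\widehat S_i$ is a $\pi_1$-null immersed $2$-sphere with trivial normal framing and algebraically trivial self-intersections, equipped with the geometrically dual framed immersed sphere $\widehat T_i$. Since $\pi_1(N')$ need not be a good group, I would invoke the $\pi_1$-null Sphere Embedding Theorem of Freedman--Quinn (Theorem 6.1 of \cite{freedmanTopology4manifolds1990}, together with the discussion of Chapter 5 and Sections 5.2--5.3): after replacing $N'$ by a topological $4$-manifold $N''$ that is $s$-cobordant rel $\partial$ to it, this produces disjoint topologically embedded $2$-spheres $\overline S_i\subset N''$ with trivial normal bundle, homologous to the $\widehat S_i$, and each still admitting a geometrically dual surface. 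Passing to $N''$ is harmless: $\partial N''\cong M_{P_0(J)\#-P_1(J)}$, and an $s$-cobordism induces isomorphisms on $\pi_1$, $H_1$, and $H_2$. Finally, as in Section \ref{Background}, surgering $N''$ along the $\overline S_i$ — one from each hyperbolic pair, each with trivial normal bundle and a geometric dual, so that the surgery kills $[\overline S_i]$ together with its dual class, leaves $H_1$, $\pi_1$, and the boundary unchanged, and creates no new second homology — yields a topological $4$-manifold satisfying the hypotheses of Proposition \ref{TopSliceCondition}. Hence $P_0(J)\#-P_1(J)$ is topologically slice, i.e.\ $P_0(J)$ and $P_1(J)$ are topologically concordant.

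The main obstacle is the middle step: verifying rigorously that the ambient surgeries output immersed spheres that are genuinely $\pi_1$-null and have trivial relative Euler number. This means pinning down exactly how the clasp-undoing homotopies, the $2$-handle cores, and the excised region $\nu(A)$ contribute to double-point loops and to normal framings, and then checking that the $\widehat T_i$ really serve as the algebraically transverse framed dual spheres that the $\pi_1$-null Sphere Embedding Theorem requires. Everything preceding it is the construction already carried out in Section \ref{Construction}, and everything following it is a routine surgery-and-$s$-cobordism bookkeeping argument.
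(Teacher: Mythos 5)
Your proposal follows the paper's argument: rerun the construction of $N'$ using the hypothesized slice disk for $J\#-J$, cap off the Seifert surface portions of the $H_2$-generators with the hypothesized disjoint framed $\pi_1$-null immersed disks to obtain $\pi_1$-null immersed spheres with algebraically trivial self-intersections and geometric duals, apply the topological form of Theorem 6.1 in Freedman--Quinn to pass to an $s$-cobordant $N''$ in which these spheres are embedded with trivial normal bundle, then surger away one sphere from each hyperbolic pair to produce a $4$-manifold satisfying Proposition \ref{TopSliceCondition}. This is precisely the paper's route, and your explicit identification of the immersed spheres $\widehat S_i = D\cup E$ fills in detail the paper leaves terse.

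One correction to the bookkeeping. You assert that ``one checks, as there, that \dots $\pi_1(N')$ [is] normally generated by the meridian $\lambda$,'' pointing back to Section \ref{Construction}. That is not established there: the relevant lemma only gives normal generation of $\pi_1(N\setminus\nu(A))$ by $\lambda$ \emph{and} a meridian of $A$, and the $(1)$-solution definition does not require (and the $(1)$-solution construction does not verify) normal generation of $\pi_1$ by the meridian. In the present proposition this condition must hold in order to invoke Proposition \ref{TopSliceCondition} on the surgered $N''$, and it follows from the hypothesis itself: the meridian of $A$, pushed into $Z$, is freely homotopic to one of the parallel longitudes of $J$, which by assumption bounds an immersed disk in the slice disk complement, hence is nullhomotopic in $Z$ and in $N'$. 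So the normal-generation claim is true, but it is a consequence of the extra hypothesis in this proposition, not a carry-over from Section \ref{Construction}; the paper makes this explicit, and your write-up should too.
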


Before proving this proposition we will review some definitions and techniques from the theory of 4-manifolds. These make sense in both the smooth and topological settings.

\begin{definition}
An $h$-cobordism rel boundary between two manifolds $M^n$ and $N^n$ with homeomorphic (possibly empty) boundary $P$ is a manifold $W^{n+1}$ with boundary $M \cup (P \times I) \cup N$ where $M$ and $N$ are glued to $P \times I$ along their boundaries, such that $W$ deformation retracts to $M$ and to $N$. Given homeomorphic properly immersed submanifolds $A \looparrowright M$, $B \looparrowright N$, an $h$-cobordism rel boundary of the pairs $(M,A)$, $(N,B)$ is an $h$-cobordism rel boundary $W$ of $M$ and $N$ together with a proper immersion $A \times I \looparrowright W$ which is a product on $P \times I$ and such that the restriction to $A\times\{0\}$ gives $A \looparrowright M$ and the restriction to $A \times \{1\}$ gives $B \looparrowright N$. See Figure \ref{fig:PairHCobordism} for a schematic picture.
\end{definition}

\begin{figure}
	\centering
	\includegraphics{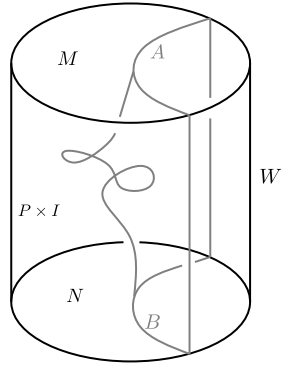}
	\caption{A schematic picture of an $h$-cobordism rel boundary of pairs. Note that $A$ and $B$ may also be immersed, but must be homeomorphic.}
	\label{fig:PairHCobordism}
\end{figure}

\begin{definition}
An $s$-cobordism (rel boundary of pairs) is an $h$-cobordism (rel boundary of pairs) such that the Whitehead torsion $\tau(W,M)$ vanishes. For a definition and discussion of the Whitehead torsion in the topological setting, see Section IV of \cite{cohenCourseSimplehomotopyTheory1973}. For a definition and discussion in the smooth setting, see \cite{milnorWhiteheadTorsion1966}.
\end{definition}

Now we will describe a 4-dimensional analog of 0-surgery. Given a sphere embedded in a 4-manifold $S^2 \hookrightarrow W^4$ with trivial normal bundle (that is, the given embedding extends to an embedding of $S^2 \times D^2$), we can cut out a neighborhood of the sphere. The new boundary is naturally homeomorphic to $S^2 \times S^1$. We then replace the piece we cut out with $D^3 \times S^1$, gluing along the identity homeomorphism $S^2 \times S^1 \to S^2 \times S^1$. If we are given two such spheres which intersect once transversely, each of which represents a generator of a $\Z$ summand of $H_2(W)$, then by Seifert-van Kampen and Mayer-Vietoris, doing surgery on one of the two spheres exactly kills the $\Z^2$ summand in $H_2(W)$ and leaves $\pi_1(W)$ and the rest of the homology groups unchanged.

\begin{proof}[Proof of Proposition \ref{SliceGuarantee}]
In the construction of $Z$ depicted in Figures \ref{fig:Zconstruction} and \ref{fig:Zconstruction2}, use the slice disk complement for $J\#-J$ given by the the conditions of Proposition \ref{SliceGuarantee}. (For the purposes of Theorem \ref{1solvable}, we could have used any slice disk complement for $J\#-J$.) And moreover, the meridian of $\eta$, together with the longitudes of $J$ which are freely homotopic to the attaching circles of the 2-handles, all bound disjoint framed $\pi_1$-null immersed disks in the slice disk complement (these are all parallel longitudes of $J$). Since the meridian of $\eta$ is then freely nullhomotopic in $Z$, we see that $\pi_1(N')$ is normally generated by $\lambda$. By the topological version of Theorem 6.1 in \cite{freedmanTopology4manifolds1990}, there is an $s$-cobordism rel boundary of the pair $(N', \text{immersed spheres})$ to a pair $(N'', \text{embedded spheres})$, where the embedded spheres still come in pairs which intersect once transversely. (See the discussion at the beginning of Chapter 5 in \cite{freedmanTopology4manifolds1990} for how to extend Theorem 6.1 to the topological setting.) Moreover, since the original disks were framed, these embedded spheres have trivial normal bundles. We can then do surgery on one sphere from each pair as described above to get a manifold that satisfies the conditions in Proposition \ref{TopSliceCondition}, and so $P_0(J)\#-P_1(J)$ is topologically slice.
\end{proof}

\begin{figure}
	\begin{center}
	\includegraphics[width=0.5\linewidth]{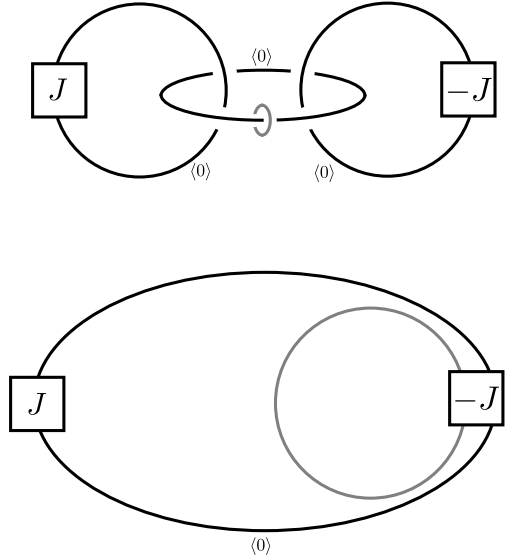}
	\end{center}
	\caption{Boundary of a slice disk complement for $J\#-J$. Any number of parallel copies of the grey curve must bound disjoint immersed $\pi_1$-null disks in order to prove $P_0(J)\#-P_1(J)$ is slice.}
	\label{fig:Zboundary}
\end{figure}

\begin{remark}
In the usual slice disk complement for $J\#-J$, the parallel longitude $J$ is not nullhomotopic, as the slice disk complement is homeomorphic to $(S^3 \setminus J) \times I$ and the longitude of a knot is not nullhomotopic in its complement. So, in order for the conditions of Corollary \ref{SliceGuarantee} to be satisfied, we would need to find a different (potentially topological) slice disk. A promising approach might be the work of Friedl-Teichner and Conway in \cite{friedlNewTopologicallySlice2005} and \cite{conwayHomotopyRibbonDiscs2022}. The first gives an algebraic condition on a surjection $\pi_1(M_K) \twoheadrightarrow G$ which guarantees that $K$ has a topological homotopy ribbon disk with fundamental group $G$ (this is a generalization of the theorem of Freedman that any knot with Alexander polynomial 1 is topologically $\Z$-slice, as their conditions reduces to the Alexander polynomial 1 condition for the abelianization map $\pi_1(M_K)\to\Z$). Conway expands on this, giving an algebraic classification of topological homotopy ribbon disks corresponding to a given surjection $\pi_1(M_K) \twoheadrightarrow G$. So, if we could find a surjection $\pi_1(M_{J\#-J}) \twoheadrightarrow G$ for some group $G$ which satisfies the conditions in the paper and which kills the longitude of $J$, this would be a large step in producing a slice disk which satisfies the conditions of Corollary  \ref{SliceGuarantee}. However, it is usually quite difficult to tell whether a particular surjection $\pi_1(M_{J\#-J}) \twoheadrightarrow G$ satisfies the relevant algebraic conditions.
\end{remark}

\section{Casson-Gordon invariants \label{CGinvariants}}

We will now review the setup for Casson-Gordon invariants, first defined in \cite{cassonCobordismClassicalKnots1986}. For any knot $K$, let $L_{K,n}$ be the $n$-fold cyclic branched cover of $S^3$ over $K$. Let $\chi:H_1(L_{K,n})\to\Z_m$ be a character. We will also use $\chi$ to denote the postcomposition of this map with the map $\Z_m\to\C^*$ sending $1 \mapsto e^{\frac{2 \pi i}{m}}$. To each such character, Casson and Gordon associate a {\it Witt class} $\tau(K,\chi) \in L_0(\C(t))\otimes\Q$.

	The {\it Witt group} $L_0(k)$ for a field $k$ is the set of equivalence classes of nondegenerate symmetric bilinear forms where two forms are equivalent if one can direct sum a metabolic form to each and get isometric forms. A metabolic form is a form which has a half-dimensional subspace on which it vanishes. Such a subspace is called a metabolizer. The set of nondegenerate symmetric bilinear forms under this equivalence relation forms a group under the direct sum. For more information on Witt groups, see e.g. Chapter 5 of \cite{livingstonIntroductionKnotConcordance}.

The following is a theorem of Casson-Gordon:

\begin{theorem}[Theorem 2 in \cite{cassonCobordismClassicalKnots1986}]
\label{CGSliceObstr}
Fix $n$ a prime power. If $K$ is smoothly or topologically slice, then the linking form on $H_1(L_{K,n})$ has a metabolizer $R$, and, for any character $\chi$ with prime power order $m$ which vanishes on $R$, we have $\tau(K,\chi)=0$.
\end{theorem}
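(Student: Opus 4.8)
The plan is to realize $L_{K,n}$ as the boundary of the $n$-fold cyclic branched cover $V$ of $(B^4,\Delta)$ associated to a slice disk $\Delta$ for $K$, to produce the metabolizer $R$ as the kernel of $H_1(L_{K,n})\to H_1(V)$, and to deduce the vanishing of $\tau(K,\chi)$ from the fact that $V$ is a rational homology $4$-ball. The prime-power hypothesis on $n$ is precisely what guarantees this rationality.

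Concretely, fix a slice disk $\Delta\subset B^4$ for $K$ (smooth, or locally flat in the topological case), and let $V=V_{K,n}$ be the $n$-fold cyclic cover of $B^4$ branched along $\Delta$, a compact $4$-manifold with $\partial V = L_{K,n}=:\Sigma$. The key homological input is that $V$ is a rational homology $4$-ball: the exterior $E=B^4\setminus\nu(\Delta)$ has the integral homology of $S^1$ by Alexander duality, so its infinite cyclic cover has first homology a torsion $\mathbb{Q}[t^{\pm1}]$-module of order $\Delta_K$, and since $n$ is a prime power $p^r$ the Fox--Milnor factorization $\Delta_K(t)=f(t)f(t^{-1})$ (up to units) forces $\Delta_K$ to be coprime to $t^n-1$: a common root would give $\Phi_n\mid\Delta_K$, hence $p=\Phi_n(1)\mid f(1)$, contradicting $f(1)\mid\Delta_K(1)=\pm1$. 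A Wang/Milnor sequence argument then yields $H_*(V;\mathbb{Q})\cong H_*(\mathrm{pt};\mathbb{Q})$, so in particular $H_1(\Sigma)$ and $H_1(V)$ are finite. The standard fact that $\ker(H_1(\partial V)\to H_1(V))$ is a metabolizer for the torsion linking form when $V$ is a rational homology ball (half-lives-half-dies via Poincar\'e--Lefschetz duality) then provides the required metabolizer $R:=\ker\bigl(H_1(\Sigma)\to H_1(V)\bigr)$.

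Now let $\chi$ be a character of prime power order $m$ vanishing on $R$. Then $\chi$ factors through the image of $H_1(\Sigma)$ in $H_1(V)$ and, extending within the appropriate primary part of $H_1(V)$, lifts to a prime-power order character $\widetilde\chi\colon H_1(V)\to\mathbb{Z}_{m'}$. Let $\widetilde\Delta\subset V$ be the (disk) branch locus, $\widetilde K=\partial\widetilde\Delta\subset\Sigma$ the branch circle, and $W:=V\setminus\nu(\widetilde\Delta)$, which is just the unbranched $n$-fold cyclic cover of $E$ and has $\partial W = \Sigma\setminus\nu(\widetilde K)$. By a Mayer--Vietoris argument applied to $V=W\cup(\widetilde\Delta\times D^2)$, we get $H_1(W;\mathbb{Q})\cong\mathbb{Q}$, generated by a meridian $\mu_{\widetilde\Delta}$ of $\widetilde\Delta$, and $H_2(W;\mathbb{Q})=0$. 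The decoration used to define $\tau(K,\chi)$ on $\Sigma\setminus\nu(\widetilde K)$ — the character $\chi$ together with the map sending $\mu_{\widetilde\Delta}$ to $t$ — extends over $W$ precisely because $\widetilde\chi$ extends $\chi$ over $H_1(V)$; hence, by the definition of the Casson--Gordon invariant, $\tau(K,\chi)\in L_0(\mathbb{C}(t))\otimes\mathbb{Q}$ is computed from this single bounding $4$-manifold $W$ (no multiple of $W$ is needed) via its ordinary and $\mathbb{C}(t)$-twisted intersection forms. The ordinary form vanishes since $H_2(W;\mathbb{Q})=0$; and since $W$ is a rational homology circle whose $H_1$-generator $\mu_{\widetilde\Delta}$ maps to the unit $t\ne1$ of $\mathbb{C}(t)$, a standard Alexander-module argument gives $H_*(W;\mathbb{C}(t))=0$, so the twisted form vanishes as well. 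Therefore $\tau(K,\chi)=0$.

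The step I expect to be the main obstacle is this last one: making precise the assertion that $\tau(K,\chi)$ is \emph{literally} the Witt class read off from the twisted and ordinary intersection forms of any $4$-manifold bounding the decorated $\Sigma\setminus\nu(\widetilde K)$ once the decoration extends — i.e.\ that there is no closed-manifold correction term in $L_0(\mathbb{C}(t))\otimes\mathbb{Q}$ — and carefully verifying the vanishing of $H_*(W;\mathbb{C}(t))$. Both belong to Casson--Gordon's well-definedness package, and I would cite them rather than reprove them. In the topological category one additionally records that the cyclic branched cover of $B^4$ over a locally flat disk is a topological $4$-manifold and that topological invariance of signature together with Novikov additivity (and Wall non-additivity where needed) make the same argument go through; this is routine but deserves a sentence.
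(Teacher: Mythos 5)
The paper does not prove this statement; it simply cites Theorem~2 of Casson--Gordon \cite{cassonCobordismClassicalKnots1986}, so there is no ``paper's proof'' to compare against. What you have written is a reasonable sketch of the standard Casson--Gordon argument: take the $n$-fold cyclic branched cover $V$ of $(B^4,\Delta)$, show it is a $\mathbb{Q}$-homology ball when $n$ is a prime power, extract the metabolizer $R=\ker(H_1(\Sigma)\to H_1(V))$ via half-lives-half-dies, extend $\chi$ to $H_1(V)$, and compute $\tau(K,\chi)$ from the disk-exterior cover $W=V\setminus\nu(\widetilde\Delta)$. That is indeed how the theorem is proved.

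Two points deserve correction or more care. First, a small slip: $H_1$ of the infinite cyclic cover of the disk exterior $E=B^4\setminus\nu(\Delta)$ has $\mathbb{Q}[t^{\pm1}]$-order equal to the \emph{disk} Alexander polynomial $f$, not to $\Delta_K$; the Fox--Milnor identity $\Delta_K\doteq f(t)f(t^{-1})$ is a consequence, not a description of that module. Your coprimality argument still goes through (one shows $\Delta_K$ is coprime to $t^n-1$ using $\Phi_{p^s}(1)=p$ and $|f(1)|=1$, and $f\mid\Delta_K$ then gives coprimality of $f$ with $t^n-1$), so the conclusion that $V$ is a $\mathbb{Q}$-homology ball is fine, but the statement as written misidentifies the module. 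Second, and more substantively: the assertion that $H_*(W;\mathbb{C}(t))=0$ ``by a standard Alexander-module argument'' glosses over the only place where the prime-power hypothesis on $m$ (as opposed to $n$) enters in a serious way. Because the coefficient system on $W$ is twisted through $\mathbb{Z}_{m'}\times\mathbb{Z}\to\mathbb{C}(t)^*$ and not merely through $\mathbb{Z}$, Milnor's rational-homology-circle argument does not directly apply to $W$; Casson--Gordon pass to the $m'$-fold cover $W_{m'}$ of $W$, use a $\mathbb{Z}_q$-coefficient version of Milnor's theorem (valid precisely because $m'$ is a prime power) to show $W_{m'}$ is again a $\mathbb{Q}$-homology circle, and then deduce vanishing of the twisted homology of $W$ as a summand. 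You flag this step as the one you would cite rather than reprove, which is acceptable, but you should recognize that the prime-power condition on $m$ is load-bearing there and not just in the character-extension step. With those caveats, the sketch is faithful to Casson--Gordon's proof.
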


The sliceness condition on $K$ can be weakened to (1.5)-solvability: 

\begin{theorem}[see Theorem B.1 of \cite{cochranFilteringSmoothConcordance2013} and Theorem 9.11 of \cite{cochranKnotConcordanceWhitney2003}]
\label{CG1.5SolvObstr}
The conclusion in Theorem \ref{CGSliceObstr} holds even if $K$ is only (1.5)-solvable.
\end{theorem}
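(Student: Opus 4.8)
The plan is to run Casson and Gordon's proof of Theorem~\ref{CGSliceObstr} with a $(1.5)$-solution $W$ for $K$ in place of the exterior of a slice disk, which is how the cited results of \cite{cochranKnotConcordanceWhitney2003} and \cite{cochranFilteringSmoothConcordance2013} proceed. Recall that such a $W$ has $\partial W=M_K$, has $H_1(M_K)\to H_1(W)$ an isomorphism onto $\Z$, and has $H_2(W)$ generated by embedded surfaces $\{L_i,D_i\}$ that are geometrically dual in pairs and otherwise disjoint, with $\pi_1(L_i)\to\pi_1(W)$ landing in $\pi_1(W)^{(2)}$ and $\pi_1(D_i)\to\pi_1(W)$ landing in $\pi_1(W)^{(1)}$.

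First I would extract the metabolizer. Since $H_1(W)\cong\Z$, the manifold $W$ has a unique connected $n$-fold cyclic cover $\widetilde W$, whose boundary $\widetilde M$ is the $n$-fold cyclic cover of $M_K$, and $H_1(L_{K,n})$ is naturally the torsion subgroup of $H_1(\widetilde M)$ with its linking form. The solvability hypothesis keeps the homology of $\widetilde W$ controlled enough that a half-lives--half-dies argument with $\Q/\Z$ coefficients --- exactly as in Casson--Gordon's proof, and as in the proof that $(0.5)$-solvable knots are algebraically slice --- shows that $R:=\ker\big(H_1(L_{K,n})\to H_1(\widetilde W)\big)$ is a metabolizer for the linking form.

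Next, given a character $\chi$ of prime-power order vanishing on this $R$, I would check that the coefficient system computing $\tau(K,\chi)$ extends over $W$. Recall that $\tau(K,\chi)$ can be computed (when it is defined, using that $n$ and $m$ are prime powers) as the reduced $\C(t)$-twisted signature of any $4$-manifold bounding $M_K$ over which the pertinent metabelian coefficient system $\pi_1(M_K)\to\Gamma$ --- the one determined by $\chi$ and by linking number with the lifted knot --- extends. Because $\chi$ vanishes on $R$, the linking-number part behaves as in the slice case, and $(1.5)$-solvability controls the image of $\pi_1(M_K)/\pi_1(M_K)^{(2)}$ in $\pi_1(W)/\pi_1(W)^{(2)}$, this coefficient system extends over a copy of $W$. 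Matching the extension with the boundary data defining $\tau(K,\chi)$ and keeping track of any multiples and of torsion in $H_1$ is the step I expect to require the most care: it is essentially the entire content of the claim that $(1.5)$-solvability, rather than sliceness, suffices.

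Finally I would verify that the signature defect vanishes. Since $\Gamma$ is metabelian, the extended coefficient system kills $\pi_1(W)^{(2)}$, so each $L_i$ becomes trivial under the twist; its $\C(t)$-twisted intersection numbers with the other $L_j$ therefore agree with the ordinary ones, all of which are zero, and $\langle[L_i]\rangle$ is an isotropic subspace of $H_2(W;\C(t))$ of exactly half the rank. Hence the $\C(t)$-twisted intersection form of $W$ is metabolic, its Witt class in $L_0(\C(t))\otimes\Q$ is $0$, and the signature defect --- namely $\tau(K,\chi)$ --- vanishes. The conceptual point is that a $(1.5)$-solution carries a Lagrangian sitting two steps into the derived series, precisely deep enough to be invisible to the metabelian coefficient system underlying Casson--Gordon invariants; the obstacle, as noted, is the bookkeeping that extends that coefficient system over $W$.
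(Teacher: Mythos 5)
The paper does not supply its own proof of this statement; it is quoted as a citation to Theorem~9.11 of Cochran--Orr--Teichner \cite{cochranKnotConcordanceWhitney2003} and Theorem~B.1 of Cochran--Harvey--Horn \cite{cochranFilteringSmoothConcordance2013}, so there is no in-paper argument to compare against. Your sketch does track the strategy of those references: replace the slice-disk exterior by a $(1.5)$-solution $W$, pass to the $n$-fold cyclic cover $\widetilde W$, extract the metabolizer $R=\ker\big(H_1(L_{K,n})\to H_1(\widetilde W)\big)$, extend the Casson--Gordon metabelian coefficient system over $W$ using that $\chi$ kills $R$, and then observe that the dual surfaces $S_i$ (your $L_i$), whose $\pi_1$ lands in $\pi_1(W)^{(2)}$, lift and become an isotropic half-rank subspace of $H_2(W;\C(t))$.

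Two of the steps you label ``bookkeeping'' deserve to be named more sharply, since they carry most of the weight. First, $\widetilde W$ is an \emph{unbranched} cyclic cover of a $(1.5)$-solution, not a branched cover of $B^4$, so it is not a $\Q$- or $\Z_p$-homology ball even for $n$ a prime power; the extraction of a metabolizer for the torsion linking form is therefore not literally Casson--Gordon's branched-cover duality argument but a $\Q/\Z$- or $\Z_p$-coefficient Bockstein and Poincar\'e--Lefschetz duality argument tied to the special structure of $H_2(W)$, as in the proof that $(0.5)$-solvability implies algebraic sliceness. Second, the claim that the lifted $L_i$ span \emph{exactly half} of $H_2(W;\C(t))$ is not a formal consequence of the untwisted fact that $\{S_i,T_i\}$ freely generate $H_2(W;\Z)$; one needs a bound on the twisted second Betti number and a nondegeneracy statement for the twisted intersection pairing between the lifted $S_i$ and $T_j$ (this is where the complementary condition $\pi_1(T_j)\to\pi_1(W)^{(1)}$ enters). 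Neither point is an error in your outline, but they are the substantive inputs supplied by the cited references rather than by the present paper, which records the theorem without reproof.
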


So, Casson-Gordon invariants give us an obstruction to (1.5)-solvability, which we will show fails for $P_0(J)\#-P_1(J)$.

\begin{prop}
\label{CGObstrFails}
The conclusion of Theorems \ref{CGSliceObstr} and \ref{CG1.5SolvObstr} hold for $P_0(J)\#-P_1(J)$. In other words, Theorem \ref{CG1.5SolvObstr} fails to obstruct the $(1.5)$-solvability of the knot $P_0(J)\#-P_1(J)$.
\end{prop}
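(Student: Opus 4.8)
The plan is to combine the connected-sum decomposition of the branched cover with Litherland's satellite formula for Casson--Gordon invariants and the hypotheses of Definition \ref{HomotopicallyRelated}. Fix $n$ a prime power and set $K = P_0(J)\#-P_1(J)$, so that $L_{K,n} = L_{P_0(J),n}\#-L_{P_1(J),n}$ and $H_1(L_{K,n}) = A_0\oplus A_1$ with $A_\epsilon = H_1(L_{P_\epsilon(J),n})$ and linking form $\lambda_0\oplus(-\lambda_1)$. First I would record the branched-cover picture: since $\eta_0$ and $\eta_1$ are freely homotopic in $W=(S^3\times I)\setminus\nu(C)$ they have the same winding number $w=\lk(\eta_\epsilon,K_\epsilon)$, and, writing $L_{P_\epsilon(J),n}$ as the infection of $L_{K_\epsilon,n}=L_{P_\epsilon(U),n}$ along the lift(s) of $\eta_\epsilon$ by (the appropriate lift of) $J$, one obtains a splitting $A_\epsilon = B_\epsilon\oplus C_\epsilon$ with $B_\epsilon = H_1(L_{K_\epsilon,n})$ the pattern contribution, $C_\epsilon$ the companion contribution built only out of $J$, $n$, $w$, and with the linking form block diagonal with respect to this splitting. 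Because $\eta_0$ and $\eta_1$ are freely homotopic in the concordance complement, this also furnishes an identification $C_0\cong C_1$ of linking-form groups.

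Next I would take the metabolizer to be $R = R_K\oplus R_J$, where $R_K\subseteq B_0\oplus B_1$ is a metabolizer of $\lambda_{K_0}\oplus(-\lambda_{K_1})$ --- which exists by Theorem \ref{CGSliceObstr} applied to the slice knot $K_0\#-K_1$, $K_0$ and $K_1$ being concordant --- and $R_J\subseteq C_0\oplus C_1$ is the diagonal metabolizer coming from $C_0\cong C_1$; block diagonality makes $R$ a genuine metabolizer of $\lambda_0\oplus(-\lambda_1)$. Now let $\chi$ be any character of prime-power order vanishing on $R$. It splits as $\chi=(\chi_0,\chi_1)$ with $\chi_\epsilon = (\chi_\epsilon^B,\chi_\epsilon^C)$, where $(\chi_0^B,\chi_1^B)$ vanishes on $R_K$ and $\chi_1^C = -\chi_0^C$ under $C_0\cong C_1$. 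By additivity of $\tau$ under connected sum and its behavior under mirroring, $\tau(K,\chi) = \tau(P_0(J),\chi_0)+\tau(-P_1(J),\chi_1)$, and Litherland's satellite formula rewrites $\tau(P_\epsilon(J),\chi_\epsilon)$ as $\tau(K_\epsilon,\chi_\epsilon^B)$ plus a companion term which is a signed sum of invariants $\tau(J,\cdot)$ whose characters are determined by $w$, $n$, and $\chi_\epsilon^C$ alone. The pattern terms assemble into $\tau\!\left(K_0\#-K_1,(\chi_0^B,\chi_1^B)\right)$, which vanishes by Theorem \ref{CGSliceObstr} since $(\chi_0^B,\chi_1^B)$ vanishes on $R_K$ and $K_0\#-K_1$ is slice; and the companion terms for $P_0$ and $P_1$ are produced from $\chi_0^C$ and $\chi_1^C=-\chi_0^C$ by the same recipe, so after accounting for the mirror sign coming from $-P_1(J)$ they cancel in pairs. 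Hence $\tau(K,\chi)=0$, which is the conclusion of Theorems \ref{CGSliceObstr} and \ref{CG1.5SolvObstr}.

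The main obstacle is the first step: establishing the splitting $A_\epsilon = B_\epsilon\oplus C_\epsilon$ with block-diagonal linking form, producing the identification $C_0\cong C_1$, and --- most delicately --- verifying that Litherland's companion term depends only on the free homotopy class of $\eta_\epsilon$ (equivalently only on $w$, $n$, $\chi_\epsilon^C$), which is precisely what forces the $\tau(J,\cdot)$ contributions for $P_0$ and $P_1$ to cancel. For winding number one, and in particular for the Mazur pattern, this is cleanest: $\eta_\epsilon$ has a unique lift $\tilde\eta_\epsilon$ to $L_{K_\epsilon,n}$ and $L_{P_\epsilon(J),n}$ is literally $(L_{K_\epsilon,n}\setminus\nu(\tilde\eta_\epsilon))\cup(L_{J,n}\setminus\nu(\tilde J))$, with $\tilde\eta_0$ and $\tilde\eta_1$ freely homotopic in the branched cover of $W$ over $C$; for general $w$ one must track the $\gcd(w,n)$ lifts and the corresponding cover of $J$. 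An alternative, parallel to Section \ref{Construction}, is to build by hand a $4$-manifold $\mathcal W$ with $\partial\mathcal W = L_{K,n}$ by taking the $n$-fold branched cover of the concordance complement, infecting it along the immersed annulus lifting the homotopy $\eta_0\simeq\eta_1$, and capping off with a $Z$-type piece: then $R=\ker\!\left(H_1(L_{K,n})\to H_1(\mathcal W)\right)$ and $\tau(K,\chi)=0$ reduces to showing that the dual surface pairs generating $H_2(\mathcal W)$ make the twisted intersection form metabolic, the subtle point being, once more, that the surfaces built from Seifert surfaces for $J$ are $\chi$-null after passing to the $\Z_n$-cover, which is exactly where the free homotopy of $\eta_0,\eta_1$ and the diagonal choice of $R$ re-enter. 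In either approach the self-intersections of the homotopy annulus --- which prevent it from being embedded --- are the source of the one genuine difficulty.
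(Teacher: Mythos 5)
Your overall strategy matches the paper's: use the Litherland decomposition of $H_1(L_{P(J),n})$ into pattern and companion pieces, take a block-diagonal metabolizer built from a metabolizer of $K_0\#-K_1$ and a metabolizer of the companion piece, and try to assemble the pattern terms and companion terms of Litherland's formula into $\tau$-invariants of the slice knots $K_0\#-K_1$ and $J\#-J$. But the proposal has a genuine gap exactly at the step you flag as ``the main obstacle,'' and the way you frame that obstacle is itself misleading. Litherland's formula reads
$$\tau(P(J),\chi)[t]=\tau(K,\chi_K)[t]+\sum_{i=1}^{h}\tau(J,\chi_i)\!\left[\chi_K(\tilde\eta^i)\,t^{w/h}\right],$$
so the companion summands are \emph{not} determined by $w$, $n$, and $\chi_\epsilon^C$ alone: each one carries a variable substitution $t\mapsto\chi_{K_\epsilon}(\tilde\eta_\epsilon^i)t^{w/h}$ in which the \emph{pattern} character appears, evaluated on the lifts of $\eta_\epsilon$ in $L_{K_\epsilon,n}$. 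For the $P_0$-companion terms and the $P_1$-companion terms to assemble into $\sum_i\tau(J\#-J,\cdot)[\cdots]$ and then vanish, you need the substitutions to agree, i.e.\ you need $\chi_{K_0}(\tilde\eta_0^i)=\chi_{-K_1}(-\tilde\eta_1^i)$ after a suitable reindexing of lifts. Nothing in your proposal establishes this, and it does not follow from your splitting $\chi_1^C=-\chi_0^C$; it is a constraint on the pattern characters $\chi_\epsilon^B$, not the companion characters.

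The missing ingredient, which is the heart of the paper's proof, is to lift the free homotopy (the immersed annulus $A$) between $\eta_0$ and $\eta_1$ to the $n$-fold cyclic branched cover of $B^4$ over the natural slice disk for $K_0\#-K_1$. By homotopy lifting, each lift $\tilde\eta_0^i$ is joined by an immersed annulus to a corresponding lift $-\tilde\eta_1^i$, so (after reindexing) $\tilde\eta_0^i - (-\tilde\eta_1^i)$ bounds in the branched cover, hence lies in the metabolizer $R\subset H_1(L_{K_0\#-K_1,n})$. Since the pattern part $\varphi=(\chi_{K_0},\chi_{-K_1})$ of the character was chosen to vanish on $R$, one gets $\chi_{K_0}(\tilde\eta_0^i)=\chi_{-K_1}(-\tilde\eta_1^i)$, which is exactly what is needed to collapse the companion sums into $\sum_i \tau(J\#-J,\psi^i)[\chi_{K_0}(\tilde\eta_0^i)t^{w/h}]=0$. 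Your secondary claim that the free homotopy ``furnishes an identification $C_0\cong C_1$'' is also not the right mechanism: both $C_\epsilon$ are already canonically $hH_1(L_{J,k})$ from the Litherland isomorphism using only $w$ and $n$, so no homotopy input is needed there; the homotopy enters precisely, and only, at the step you leave open.
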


Before proving this proposition, we will review some results of Litherland about the Casson-Gordon invariants of satellite knots. For reference, see \cite{litherlandCobordismSatelliteKnots1984} and \cite{litherlandSymmetriesTwistSpunKnots1985}. For a satellite operator $P$ given by $K\cup\eta$, let $w=\lk(K,\eta)$ be the winding number of $P$. Let $h=\gcd(n,w)$ and let $k=\frac{n}{h}$. Write $\Ch_n(K)=\Hom(H_1(L_{K,n}),\C^*)$ for the group of characters. There are canonical isomorphisms
	$$H_1(L_{P(J),n}) \cong H_1(L_{K,n}) \oplus h H_1(L_{J,k})$$
and
	$$\Ch_n(P(J)) \cong \Ch_n(K) \oplus h\Ch_k(J)$$
where $hA$ is the direct sum of $h$ copies of the group $A$, and the linking form on $H_1(L_{P(J),n})$ is given by the direct sum of the linking forms on the summands. (When $k=1$, the 1-fold branched cover $L_{J,1}$ is just $S^3$, and so $\Ch_1(J)$ is the trivial group.)

Then, given a character $\chi\in\Ch_n(P_\epsilon(J))$ which is identified under this isomorphism with $(\chi_K,\chi_1,\cdots,\chi_h)$, we have:
	$$\tau(P(J),\chi)[t]=\tau(K,\chi_K)[t]+\sum_{i=1}^{h}\tau(J,\chi_i)[\chi_K(\tilde{\eta}^i)t^{\frac{w}{h}}]$$
where the $\tilde{\eta}^i$ are the $h$ lifts of of $\eta$ to $L_{K,n}$ (or more precisely, the $h$ lifts of $k\eta$).

Moreover, for any knot $G$, the map $\cdot \# G$ is the satellite operator given by $G\cup\mu_G$ where $\mu_G$ is the meridian of $G$. In this case, $w=h=1$ and $k=n$, so we get
	$$\tau(F\#G,\chi)=\tau(F,\chi_F)+\tau(G,\chi_G)$$
(note that $\tilde\mu_G$ bounds a disk and in particular is nullhomologous in any cyclic branched cover over $G$, and so $\chi_G(\tilde\mu_G)=1\in\C^*$).

\begin{proof}[Proof of Proposition \ref{CGObstrFails}]
From the discussion above, we get the following canonical isomorphisms:
\begin{align*}
	H_1(L_{P_0(J) \# -P_1(J), n})
		& \cong H_1(L_{P_0(J),n}) \oplus H_1(L_{-P_1(J),n})\\
		& \cong H_1(L_{K_0,n}) \oplus h H_1(L_{J,k}) \oplus H_1(L_{-K_1,n}) \oplus h H_1(L_{-J,k})\\
		& \cong H_1(L_{K_0 \# -K_1, n}) \oplus h H_1(L_{J \# -J, k})
\end{align*}
Similarly, we have a canonical isomorphisms
\begin{align*}
	\Ch_n(P_0(J)\#-P_1(J))
		& \cong \Ch_n(K_0) \oplus h \Ch_k(J) \oplus \Ch_n(-K_1) \oplus h \Ch_k(-J)\\
		& \cong \Ch_n(K_0\#-K_1) \oplus h \Ch_k(J\#-J)
\end{align*}
	
By Theorem \ref{CGSliceObstr}, the linking forms on branched cyclic covers of $K_0\#-K_1$ and $J\#-J$ have metabolizers since these knots are topologically slice. Denote the metabolizer of $H_1(L_{K_0\#-K_1,n})$ by $R$ and the metabolizer of $H_1(L_{J\#-J,l})$ by $S$. The image of the submodule $R \oplus hS$ under the canonical isomorphisms is thus a metabolizer for the linking form on $H_1(L_{P_0(J) \# -P_1(J), n})$. A character $\chi\in\Ch_n(P_0(J)\#-P_1(J))$ vanishes on $R \oplus hS$ if and only if $\varphi$ vanishes on $R$ and each $\psi^i$ vanishes on $S$, where $\varphi \oplus \bigoplus_{i=1}^{h} \psi^i \in \Ch_1(L_{K_0\#-K_1,n}) \oplus h \Ch_1(L_{J\#-J,k})$ corresponds to $\chi$.

Now, let $\chi$ be a character which vanishes on $R \oplus hS$. Using the canonical isomorphisms above, decompose $\chi$ as
	$$\chi_{P_0(J)}\oplus\chi_{-P_1(J)} = \chi_{K_0} \oplus \bigoplus_{i=1}^{h} \chi_{J}^i \oplus \chi_{-K_1} \oplus \bigoplus_{i=1}^{h} \chi_{-J}^i$$
where the subscript denotes the knot whose character group to which the character belongs.

Now, we have
\begin{align*}
	\tau(P_0(J)\#-P_1(J),\chi)[t]
		& = \tau(P_0(J),\chi_{P_0(J)})[t]+\tau(-P_1(J),\chi_{-P_1(J)})[t]\\
		& = \tau(K_0,\chi_{K_0})[t]+\sum_{i=1}^{h}\tau(J,\chi_{J}^{i})[\chi_{K_0}(\tilde{\eta}_0^i)t^{\frac{w}{h}}]\\
		& \quad +\tau(-K_1,\chi_{-K_1})[t]+\sum_{i=1}^{h}\tau(-J,\chi_{-J}^{i})[\chi_{-K_1}(-\tilde{\eta}_1^i)t^{\frac{w}{h}}].
\end{align*}
By $\tilde\eta_\epsilon^i$ we are denoting the lifts of $\eta_\epsilon$ for $\epsilon\in\{0,1\}$.

Now we wish to show that $\chi_{K_0}(\tilde\eta_0^i)=\chi_{-K_1}(-\tilde\eta_1^i)$, after possibly reindexing the lifts of the $\eta_\epsilon$s. First, consider the $n$-fold cyclic branched cover of $B^4$ over the natural slice disk for $K_0\#-K_1$. The boundary of this branched cover is the $n$-fold cyclic branched cover of $K_0\#-K_1$. Now, by the homotopy lifting property, the $h$ lifts of $\eta_0$ are each joined to one of the $h$ lifts of $-\eta_1$ through an immersed annulus in the branched cover of $B^4$ which is a lift of $A$ (these lifts are all $k$-fold covers). Therefore, after possibly reindexing, we have that $\tilde\eta_0^i\oplus-(-\tilde\eta_1^i)$ is nullhomologous in the branched cover of $B^4$, so is in the kernel of the inclusion induced map, and therefore lies in the metabolizer $R$ of the linking form on $H_1(L_{K_0\#-K_1,n})$. (We write two minus signs since the $-\tilde\eta_1^i$ are lifts of $-\eta_1$, and $-(-\tilde\eta_1^i)$ are the opposites of the homology classes of these lifts.)

Now, since $\varphi$ vanishes on the metabolizer $R$ of $H_1(L_{K_0\#-K_1})$, we have that
\begin{align*}
	0
		& = \varphi(\tilde\eta_0^i\oplus-(-\tilde\eta_1^i))\\
		& = \chi_{K_0}(\tilde\eta_0^i)+\chi_{-K_1}(-(-\tilde\eta_1^i))\\
		& =\chi_{K_0}(\tilde\eta_0^i)-\chi_{-K_1}(-\tilde\eta_1^i),
\end{align*}
and so indeed $\chi_{K_0}(\tilde\eta_0^i)=\chi_{-K_1}(-\tilde\eta_1^i)$.

Therefore, we have that
\begin{align*}
	\tau(P_0(J)\#-P_1(J),\chi)
		& = \tau(K_0\#-K_1,\varphi)[t]+\sum_{i=1}^{h}\tau(J\#-J,\psi)[\chi_{K_0}(\tilde\eta_0^i)t^{\frac{w}{h}}]\\
		& = 0 + \sum_{i=1}^{h}0 = 0
\end{align*}
since $K_0\#-K_1$ and $J\#-J$ are both topologically slice and the corresponding characters vanish on the metabolizer by tracing through the isomorphism on $H_1$ (we could have equivalently written $\chi_{-K_1}(-\tilde\eta_1^i)$ instead of $\chi_{K_0}(\tilde\eta_1^i)$ in the final equation).
\end{proof}

\section{Metabelian \texorpdfstring{$\rho$}{rho} invariants}
\label{MetaRhoInvs}

Recall that the rational Alexander module $\cA_0(K)=H_1(M_K,\Q[t^{\pm 1}])$ comes equipped with the Blanchfield form
	$$\Bl_0:\cA_0(K) \times \cA_0(K) \to \Q(t)/\Q[t^{\pm 1}]$$
defined in the following way: since $\cA_0(K)$ is $\Q[t^{\pm 1}]$-torsion, given homology classes $[x],[y]\in\cA_0(K)$, there is a Laurent polynomial $p\in\Q[t^{\pm 1}]$ such that $py$ is nullhomologous. That is, there is some 2-chain $w \in C _2(M_K,\Q[t^{\pm 1}])$ such that $py = \partial w$. Then,
	$$\Bl_0([x],[y])=\frac{1}{p}\sum_{n\in\Z}\langle t^n x,w \rangle t^{-n}.$$
This can be thought of as a linking form which is equivariant with respect to the deck action.
	
For a 3-manifold $M$ (for our purposes, this will always be 0-surgery on a knot) and a representation $\varphi:\pi_1(M)\to\Gamma$, Cheeger and Gromov defined {\it von Neumann $\rho$-invariants} $\rho(M,\varphi)$ in \cite{cheegerBoundsNeumannDimension1985}. Typically, we will assume that $\Gamma$ is {\it poly-torsion-free-abelian (PTFA)}, which means that it admits a normal series
$$\{1\}\lhd\Gamma_1\lhd\cdots\lhd\Gamma_{n-1}\lhd\Gamma_n=\Gamma$$
such that each quotient $\Gamma_{i+1}/\Gamma_i$ is torsion-free abelian. If $M$ bounds a 4-manifold $W$, and $\psi$ extends to $\pi_1(W)$, then $\rho(M,\varphi)$ is given by the difference $\sigma_\Gamma^{(2)}(W,\psi)-\sigma(W)$, where $\sigma(W)$ is the ordinary signature of $W$ and $\sigma_\Gamma^{(2)}(W,\psi)$ is a certain kind of twisted signature. In other words, the $\rho$-invariants can be thought of as ``signature defects.'' We will not define them rigorously here, and instead will use a few key properties:

\begin{itemize}
	\item Given an injection $\Gamma\hookrightarrow\Gamma'$, we get a map $\varphi':\pi_1(M)\to\Gamma\hookrightarrow\Gamma'$. In this situation, $\rho(M,\varphi)=\rho(M,\varphi')$.
	\item If $\varphi$ is the zero map, then $\rho(M,\varphi)=0$.
	\item If $K$ is slice, $\Gamma$ is a PTFA group, and $\varphi:\pi_1(M_K)\to\Gamma$ extends to the fundmental group of the exterior of a slice disk, then $\rho(M_K,\varphi)=0$.
\end{itemize}

We will also abuse notation slightly, writing $\rho(K,\varphi):=\rho(M_K,\varphi)$.
	
Given a submodule $P\subset\cA_0(K)$, we get a normal subgroup of $\pi_1(M_K)$:
	$$\tilde{P}=\ker(\pi_1(M_K)^{(1)} \twoheadrightarrow \pi_1(M_K)^{(1)}/\pi_1(M_K)^{(2)}=\cA(K) \to \cA_0(K) \twoheadrightarrow \cA_0(K)/P)$$
where $\cA(K)=H_1(M_K;\Z[t^{\pm 1}])$ is the ordinary Alexander module. A priori, $\tilde{P}$ is only normal in $\pi_1(M_K)^{(1)}$. But, since $\pi_1(M_K)=\pi_1(M_K)^{(1)}\rtimes\Z$, we can write any element of $\pi_1(M_K)$ as $t^k a$ where $t$ is the generator of $\Z$ and $a\in\pi_1(M_K)^{(1)}$. Then, given an element $p\in\tilde{P}$, we see that $(t^k a)p(t^k a)^{-1}=t^k (apa^{-1}) t^{-k}$. Thus, since $\tilde{P}$ is normal in $\pi_1(M_K)^{(1)}$, we see that $apa^{-1}\in\tilde{P}$, and therefore its image in $\cA_0(K)$ is in $P$. But then, since $P$ is a submodule of $\cA_0(K)$, the element $t^k (apa^{-1}) t^{-k} = t^k \cdot apa^{-1}$ must also be in $P$, and therefore $(t^k a)p(t^k a)^{-1}\in\tilde{P}$. Thus, $\tilde{P}$ is normal in the larger group $\pi_1(M_K)$.

The normal subgroup $\tilde{P}$ in turn gives us a quotient map $\varphi_{\tilde{P}}:\pi_1(M_K)\twoheadrightarrow\pi_1(M_K)/\tilde{P}$. Since $\pi_1(M_K)^{(2)}\subseteq\tilde{P}$, this quotient is metabelian. As in \cite{cochranDerivativesKnotsSecondorder2010}, the groups $\pi_1(M_K)/\tilde{P}$ are all PTFA, and so we will not need to worry about this condition in the following discussion.

Recall from Section \ref{CGinvariants} that given a form on some space (for example the Blanchfield form $\Bl_0$ on the rational Alexander module $\cA_0(K)$ of a knot $K$), a {\it metabolizer} is a subspace which is its own orthogonal complement. Metabolizers of the Blanchfield form are intimately related with sliceness of knots: if $K$ is slice with a slice disk $\Delta$, then
	$$P_\Delta=\ker(\cA_0(K)=H_1(M_K;\Q[t^{\pm 1}]) \to H_1(B^4\setminus\Delta;\Q[t^{\pm 1}])$$
is a metabolizer for the Blanchfield form on $\cA_0(K)$.

The following is a theorem of Cochran-Harvey-Leidy:

\begin{theorem}[Theorem 4.2 in \cite{cochranDerivativesKnotsSecondorder2010}]
If $K$ is slice, then for some metabolizer $P_\Delta\subset\cA_0(K)$ corresponding to a slice disk $\Delta$, we have $\rho(K,\varphi_{\tilde{P}_\Delta})=0$. In particular, the set
	$$\{\rho(K,\varphi_{\tilde{P}}) | P \text{ is a metabolizer for } \cA_0(K)\}$$
contains 0.
\end{theorem}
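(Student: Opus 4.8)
The plan is to realize $\varphi_{\tilde P_\Delta}$, up to an injection of PTFA groups, as the restriction to $\pi_1(M_K)$ of a representation defined on a slice disk exterior, and then to quote the third listed property of $\rho$-invariants (vanishing whenever the representation extends over a slice disk exterior). Fix a slice disk $\Delta$ for $K$ and set $W=B^4\setminus\nu(\Delta)$. I would use the standard facts about slice disk exteriors: $H_1(M_K)\to H_1(W)$ is an isomorphism onto $H_1(W)\cong\Z$, generated by the meridian $t$, and---as recalled just before the statement---the submodule $P_\Delta=\ker(\cA_0(K)\to\cA_0(W))$ is a metabolizer for $\Bl_0$, where $\cA_0(W):=H_1(W;\Q[t^{\pm 1}])$. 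Mimicking the construction of $\tilde P$ for a knot, set
$$\tilde P^W=\ker\!\left(\pi_1(W)^{(1)}\twoheadrightarrow\pi_1(W)^{(1)}/\pi_1(W)^{(2)}=H_1(W;\Z[t^{\pm 1}])\to\cA_0(W)\right),$$
which is normal in $\pi_1(W)$ by the same argument as for $\tilde P$ in a knot exterior, and let $\Gamma=\pi_1(W)/\tilde P^W$. Since $\pi_1(W)^{(2)}\subseteq\tilde P^W$, the group $\Gamma$ is metabelian, fitting in an extension $1\to V\to\Gamma\to\Z\to 1$ with $V$ a subgroup of the $\Q$-vector space $\cA_0(W)$, so $\Gamma$ is PTFA (normal series $1\lhd V\lhd\Gamma$ with torsion-free abelian quotients).

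Next I would identify $\varphi_{\tilde P_\Delta}$ inside this picture. The inclusion $M_K=\partial W\hookrightarrow W$ induces a commuting square relating $\pi_1(M_K)^{(1)}\to\cA_0(K)$ and $\pi_1(W)^{(1)}\to\cA_0(W)$, and since $P_\Delta$ is by definition the kernel of $\cA_0(K)\to\cA_0(W)$, and $H_1(M_K)\to H_1(W)$ is an isomorphism, it follows that $\tilde P_\Delta$ is precisely the preimage of $\tilde P^W$ under $\pi_1(M_K)\to\pi_1(W)$. Consequently the induced homomorphism $\iota\colon\Lambda:=\pi_1(M_K)/\tilde P_\Delta\to\pi_1(W)/\tilde P^W=\Gamma$ is injective, and the composite $\pi_1(M_K)\to\pi_1(W)\twoheadrightarrow\Gamma$ equals $\iota\circ\varphi_{\tilde P_\Delta}$. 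In other words, $\iota\circ\varphi_{\tilde P_\Delta}\colon\pi_1(M_K)\to\Gamma$ extends over $\pi_1(W)$, the fundamental group of the exterior of the slice disk $\Delta$.

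To finish: by the first listed property of $\rho$-invariants, $\rho(K,\varphi_{\tilde P_\Delta})=\rho(K,\iota\circ\varphi_{\tilde P_\Delta})$ since $\iota$ is an injection of PTFA groups; and by the third property, $\rho(K,\iota\circ\varphi_{\tilde P_\Delta})=0$ because $K$ is slice and $\iota\circ\varphi_{\tilde P_\Delta}$ extends over $W$. Hence $\rho(K,\varphi_{\tilde P_\Delta})=0$, and since $P_\Delta$ is a metabolizer the displayed set contains $0$.

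I expect the only real content beyond bookkeeping to be the two inputs flagged above: the fact that $P_\Delta$ is a metabolizer for $\Bl_0$ (proved by a half-lives-half-dies argument using $H_2(W;\Q)=0$ and Poincar\'{e}--Lefschetz duality over $\Q[t^{\pm 1}]$, which is why the statement is phrased for \emph{some} metabolizer coming from a slice disk), and the check that $\Gamma$ is PTFA. The main obstacle in practice is purely organizational: keeping the integral and rational Alexander modules and the successive derived subgroups of $\pi_1(M_K)$ and $\pi_1(W)$ consistently aligned, so that the relevant square genuinely commutes and $\iota$ is visibly injective; once that alignment is in place the argument is formal.
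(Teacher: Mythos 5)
Your argument is correct and is essentially the standard one for this statement: build the quotient group $\Gamma=\pi_1(W)/\tilde P^W$ from the slice disk exterior $W$, observe that $\tilde P_\Delta$ is the full preimage of $\tilde P^W$ (which uses that $H_1(M_K)\to H_1(W)$ is an isomorphism so that any element landing in $\tilde P^W\subseteq\pi_1(W)^{(1)}$ must already lie in $\pi_1(M_K)^{(1)}$), deduce $\iota\colon\pi_1(M_K)/\tilde P_\Delta\hookrightarrow\Gamma$ is an injection of PTFA groups with $\iota\circ\varphi_{\tilde P_\Delta}$ extending over $\pi_1(W)$, and then apply the injectivity-invariance and slice-vanishing properties of $\rho$. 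The paper does not reprove this theorem but cites Cochran--Harvey--Leidy, and your proof reproduces their argument (and the Cochran--Orr--Teichner template it rests on) faithfully.
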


Cochran-Harvey-Leidy showed that, just as with Casson-Gordon invariants, the condition of sliceness in the previous theorem can be weakened to (1.5)-solvability:

\begin{theorem}[Theorem 10.1 in\cite{cochranDerivativesKnotsSecondorder2010}]
\label{RhoInvObstr}
If $K$ is (1.5)-solvable, then the set
	$$\{\rho(K,\varphi_{\tilde{P}}) | P \text{ is a metabolizer for } \cA_0(K)\}$$
contains 0.
\end{theorem}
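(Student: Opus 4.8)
The plan is to follow the Cochran--Orr--Teichner method for extracting signature obstructions from solutions, in the metabelian refinement of Cochran--Harvey--Leidy. Fix a $(1.5)$-solution $W$ for $K$; everything hinges on producing, from $W$, both the right metabolizer $P$ and a coefficient system over $W$ that restricts to $\varphi_{\tilde P}$ on $\partial W=M_K$. First I would produce the metabolizer: set
	$$P_W=\ker\bigl(\cA_0(K)=H_1(M_K;\Q[t^{\pm1}])\longrightarrow H_1(W;\Q[t^{\pm1}])\bigr),$$
using the coefficient system $\pi_1(W)\twoheadrightarrow H_1(W)/\text{torsion}\cong\Z$, which agrees with $\pi_1(M_K)\to\Z$ since $H_1(M_K)\to H_1(W)$ is an isomorphism. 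Running the long exact sequence of the pair $(W,M_K)$ with $\Q[t^{\pm1}]$-coefficients, invoking Poincar\'e--Lefschetz duality, and using that $\cA_0(K)$ is $\Q[t^{\pm1}]$-torsion, one shows that $P_W$ is self-annihilating for $\Bl_0$ and equal to its own $\Bl_0$-orthogonal complement; this part needs only $(0.5)$-solvability. So $P_W$ is a metabolizer, and it is the $P$ for which I will produce $\rho(K,\varphi_{\tilde P})=0$.

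Next I would extend the coefficient system over $W$ and match it with $\varphi_{\tilde P_W}$. Take $\Gamma=\pi_1(W)/\pi_1(W)^{(2)}_r$, where $(\cdot)^{(n)}_r$ is the rational derived series (cf.\ \cite{cochranDerivativesKnotsSecondorder2010}); since $\pi_1(W)^{(2)}\subseteq\pi_1(W)^{(2)}_r$, the group $\Gamma$ is metabelian, and by construction of the rational derived series it is PTFA, with normal series $\{1\}\lhd\pi_1(W)^{(1)}_r/\pi_1(W)^{(2)}_r\lhd\Gamma$. Let $\psi\colon\pi_1(M_K)\to\pi_1(W)\to\Gamma$ be the composite. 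Because $\pi_1(M_K)^{(2)}$ maps into $\pi_1(W)^{(2)}\subseteq\ker(\pi_1(W)\to\Gamma)$, the restriction $\psi|_{\pi_1(M_K)^{(1)}}$ factors through $\cA(K)$, hence through $\cA_0(K)$, and a diagram chase identifies its kernel with $\tilde P_W$ --- this uses precisely that $P_W$, not some larger submodule, is the kernel on the level of Alexander modules. Since $\psi$ is also injective on $\pi_1(M_K)/\pi_1(M_K)^{(1)}\cong\Z\to\Gamma/\Gamma_1\cong\Z$ (again using condition (1) of Definition \ref{Solvability}), we get $\ker\psi=\tilde P_W$, so $\pi_1(M_K)/\tilde P_W$ embeds in $\Gamma$ and $\psi$ equals $\varphi_{\tilde P_W}$ followed by this inclusion. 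By the invariance of $\rho$-invariants under injections of the target listed above, $\rho(K,\varphi_{\tilde P_W})=\rho(K,\psi)$.

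Finally I would show the signature defect vanishes. As $\psi$ extends over $W$, $\rho(K,\psi)=\sigma^{(2)}_\Gamma(W,\psi)-\sigma(W)$. Here $\sigma(W)=0$ because, by condition (2) of Definition \ref{Solvability}, $H_2(W;\Q)$ is freely generated by $\{S_i,T_i\}$ with the $S_i$ spanning a Lagrangian for the ordinary intersection form. For the $L^2$ term, $(1.5)$-solvability forces $\pi_1(S_i)\hookrightarrow\pi_1(W)^{(2)}\subseteq\pi_1(W)^{(2)}_r=\ker(\pi_1(W)\to\Gamma)$, so each $S_i$ lifts to the $\Gamma$-cover; a Mayer--Vietoris and duality argument then shows the lifted classes span a self-annihilating $\Q\Gamma$-submodule of half the $L^2$-rank of $H_2(W;\Q\Gamma)$, so the $\Q\Gamma$-intersection form is metabolic and $\sigma^{(2)}_\Gamma(W,\psi)=0$ by the Cochran--Orr--Teichner $L^2$-signature vanishing theorem. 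Combining, $\rho(K,\varphi_{\tilde P})=\rho(K,\varphi_{\tilde P_W})=\rho(K,\psi)=0$, so $0$ lies in the displayed set.

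I expect the main obstacle to be the middle step: constructing a metabelian PTFA quotient $\Gamma$ of $\pi_1(W)$ via the rational derived series and then verifying that the intrinsically defined metabelian representation $\varphi_{\tilde P_W}$ built from the metabolizer $P_W$ agrees, up to an injection of coefficient groups, with the representation $\psi$ that visibly extends over $W$. The first step is a standard Poincar\'e-duality computation and the last is an invocation of the $L^2$-signature vanishing machinery; but both are useless without the coefficient-system matching, which is exactly what lets the signature-defect computation on $W$ actually compute the invariant $\rho(K,\varphi_{\tilde P})$ appearing in the theorem.
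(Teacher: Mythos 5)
Your proposal is correct, and it is essentially the argument of the cited source: this theorem is not proved in the paper at all (it is quoted as Theorem 10.1 of \cite{cochranDerivativesKnotsSecondorder2010}), and your route --- take $P_W=\ker(\cA_0(K)\to H_1(W;\Q[t^{\pm 1}]))$ for a $(1.5)$-solution $W$, verify it is a metabolizer via duality as in Theorem 4.4 of \cite{cochranKnotConcordanceWhitney2003}, pass to the metabelian PTFA quotient $\pi_1(W)/\pi_1(W)^{(2)}_r$ to see that $\varphi_{\tilde P_W}$ composed with an injection extends over $W$, and then kill the signature defect with the Cochran--Orr--Teichner $L^2$-signature vanishing for $(1.5)$-solutions --- is exactly how Cochran--Harvey--Leidy prove it. The only quibble is your parenthetical that the metabolizer step needs only $(0.5)$-solvability: the kernel-is-self-annihilating statement in \cite{cochranKnotConcordanceWhitney2003} is proved at the level of $(1)$-solutions, and since your $W$ is a $(1.5)$-solution the weaker claim is neither needed nor clearly justified as stated.
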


Similarly to Casson-Gordon invariants, we can write the metabelian $\rho$ invariants of a satellite knot $P(J)$ in terms of the $\rho$ invariants of $K=P(U)$ and $J$. For this we will need to choose a basepoint common to $M_{P(J)}$, $M_K \setminus \nu(\eta)$, and $S^3 \setminus \nu(J)$, as well as meridians and longitude for the boundaries of $M_K \setminus \nu(\eta)$ and $S^3 \setminus \nu(J)$. Our choices are shown in Figure \ref{fig:BaseptChoice} for the particular case of the Mazur pattern. We will implicitly carry this choice of basepoint with us throughout the following discussion.

\begin{figure}
	\centering
	\includegraphics[width=0.5\linewidth]{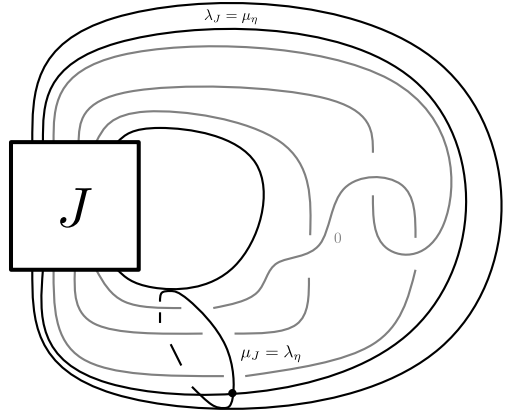}
	\caption{Decomposition of $M_{P_J}$ as $M_K \setminus \nu(\eta)$ and $S^3 \setminus J$ in the particular case of the Mazur pattern. The torus, with $K$ sitting inside, should be tied into $J$, and so it bounds the two submanifolds on either side. The basepoint is the intersection of the two labeled curves.}
	\label{fig:BaseptChoice}
\end{figure}

Let $\Gamma$ be a metabelian group. Given a homomorphism $\psi:\pi_1(M_{P(J)})\to\Gamma$, we get homomorphisms $\psi_K:\pi_1(M_K\setminus\nu(\eta))\to\Gamma$ and $\psi_J:\pi_1(S^3 \setminus \nu(J)) \to \Gamma$ since $M_K\setminus\nu(\eta)$ and $S^3 \setminus \nu(J)$ are both submanifolds of $M_{P(J)}$. When gluing together $M_K\setminus\nu(\eta)$ and $S^3\setminus\nu(J)$ to get $M_{P(J)}$ a meridian $\mu_\eta$ of $\eta$ and a longitude $\lambda_J$ of $J$ are identified. And, since $\lambda_J$ lies in $\pi_1(S^3 \setminus \nu(J))^{(2)}$, which includes into $\pi_1(M_{P(J)})^{(2)}$, we must have that $\psi$ sends the image of $\lambda_J$ (which is also the image of $\mu_\eta$) to $0\in\Gamma$. Therefore $\psi_K$ and $\psi_J$ extend uniquely to homomorphisms $\pi_1(M_K)\to\Gamma$ and $\pi_1(M_J)\to\Gamma$, which we will also denote $\psi_K$ and $\psi_J$, respectively. Similarly, we have $\psi_K(\eta)=\psi_J(\mu_J)$.

Conversely, given homomorphisms $\psi_K:\pi_1(M_K)\to\Gamma$ and $\psi_J:\pi_1(M_J)\to\Gamma$ such that $\psi_K(\eta)=\varphi_J(\mu_J)$, we can restrict them to $M_K\setminus\nu(\eta)$ and $S^3 \setminus \nu(J)$, and they will be compatible on the torus boundary when we glue these two manifolds together to get $M_{P(J)}$ (notice that the meridian of $\eta$ is nullhomotopic in $M_K$ and the longitude of $J$ is in $\pi_1(M_J)^{(2)}$ and so both must be sent to $1\in\Gamma$). Thus, $\psi_K$ and $\psi_J$ give a homomorphism $\psi:\pi_1(M_{P(J)})\to\Gamma$ whose restrictions to $M_K\setminus\nu(\eta)$ and $S^3\setminus\nu(J)$ are the homomorphisms coming from $\psi_K$ and $\psi_J$, respectively.

In summary, there is a bijection:
$$\begin{matrix}
	\Hom(\pi_1(M_{P(J)}),\Gamma)\\
	\updownarrow\\
	\{(\psi_K:\pi_1(M_K)\to\Gamma,\psi_J:\pi_1(M_J)\to\Gamma)|\psi_K(\eta)=\psi_J(\mu_J)\}
\end{matrix}.$$

The following is Lemma 2.1 in \cite{cochranDerivativesKnotsSecondorder2010}:

\begin{lemma}
\label{RhoSatelliteFormula}
In the notation in the previous paragraph,
	$$\rho(P(J),\varphi)=\rho(K,\varphi_K)+\rho(J,\varphi_J).$$
\end{lemma}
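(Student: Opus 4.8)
The plan is to deduce the identity from the defect formula $\rho(M,\varphi)=\sigma^{(2)}_\Gamma(W,\varphi)-\sigma(W)$ recalled above, applied to a well-chosen bounding $4$-manifold. In the spirit of the construction of $N$ in Section \ref{Construction}, let $E$ be obtained from $M_K\times I$ and $M_J\times I$ by gluing the solid torus $\nu(\eta)\times\{1\}\subset M_K\times\{1\}$ to the surgery solid torus of $M_J$ sitting inside $M_J\times\{1\}$, via the homeomorphism matching meridians and longitudes exactly as in the satellite construction of $P(J)$. This ``buries'' the two solid tori in the interior of $E$, and since deleting them from $M_K\times\{1\}$ and $M_J\times\{1\}$ and regluing along the resulting torus recovers $M_{P(J)}=(M_K\setminus\nu(\eta))\cup_{T^2}(S^3\setminus\nu(J))$, we get $\partial E=M_{P(J)}\sqcup(-M_K)\sqcup(-M_J)$ for a suitable choice of orientations.

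Next I would extend $\varphi$ over $E$. By van Kampen, $\pi_1(E)\cong\pi_1(M_K)*_{\Z}\pi_1(M_J)$, amalgamated over the fundamental group of the glued solid torus, whose core is sent to $\eta$ in $\pi_1(M_K)$ and to (a conjugate of) $\mu_J$ in $\pi_1(M_J)$; this is where the common basepoint fixed in the previous discussion is used. The hypothesis $\varphi_K(\eta)=\varphi_J(\mu_J)$ is precisely what is needed for $\varphi_K$ and $\varphi_J$ to glue to a homomorphism $\varphi_E\colon\pi_1(E)\to\Gamma$ whose restrictions to the three boundary components are $\varphi$, $\varphi_K$, and $\varphi_J$, respectively.

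The core of the argument is that $\sigma(E)=0$ and $\sigma^{(2)}_\Gamma(E,\varphi_E)=0$. For the ordinary signature, a Mayer--Vietoris computation gives $H_2(E;\Q)\cong\Q^2$, generated by the closed surfaces $\hat\Sigma_K\times\{1/2\}\subset M_K\times I$ and $\hat\Sigma_J\times\{1/2\}\subset M_J\times I$, where $\hat\Sigma_\bullet$ is a Seifert surface capped off with a longitudinal disk in the relevant surgery torus (so it generates $H_2$ of that $0$-surgery). These surfaces can be pushed off themselves in the $I$-direction and lie in disjoint product regions of $E$, so the intersection form vanishes identically and $\sigma(E)=0$. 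For the $L^2$-signature, assume $\Gamma$ is PTFA (as it is in all applications), so that $\Z\Gamma$ embeds in its skew field of fractions $\mathcal K$; it then suffices to show $H_2(E;\mathcal K)=0$, since that forces the middle $L^2$-Betti number of $(E,\varphi_E)$ to vanish and hence $\sigma^{(2)}_\Gamma(E,\varphi_E)=0$. As $E$ deformation retracts onto $M_K\cup_{\nu(\eta)}M_J$, this follows from Mayer--Vietoris over $\mathcal K$: the pieces $M_K$ and $M_J$ have vanishing $\mathcal K$-homology because the $0$-surgery on a knot has the $\mathcal K$-homology of a point for any nontrivial PTFA representation (as in \cite{cochranKnotConcordanceWhitney2003}), and the amalgamating circle contributes nothing since $\varphi_E$ is nontrivial on its core.

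Putting these together, $\rho(\partial E,\varphi_E)=\sigma^{(2)}_\Gamma(E,\varphi_E)-\sigma(E)=0$, and since $\rho$ is additive over disjoint unions with $\rho(-M,\psi)=-\rho(M,\psi)$, this rearranges to $\rho(P(J),\varphi)=\rho(K,\varphi_K)+\rho(J,\varphi_J)$, as desired. The main obstacle is the vanishing of the $L^2$-signature: one must run Mayer--Vietoris over $\mathcal K$ carefully and separately treat the degenerate cases in which $\varphi_K$, $\varphi_J$, or $\varphi_E(\eta)$ is trivial (each of which collapses to an easier computation --- for instance, if $\varphi_J$ is trivial the desired identity reduces to $\rho(P(J),\varphi)=\rho(K,\varphi_K)$ and one can use $M_K\times I$ in place of $E$). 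A secondary point is the basepoint and conjugation bookkeeping needed to make the amalgamated-product description of $\pi_1(E)$ literally valid, which is exactly why the paper was careful to fix a common basepoint for $M_{P(J)}$, $M_K\setminus\nu(\eta)$, and $S^3\setminus\nu(J)$.
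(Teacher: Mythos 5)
The paper does not prove this lemma; it cites it verbatim as Lemma~2.1 of Cochran--Harvey--Leidy \cite{cochranDerivativesKnotsSecondorder2010}, so there is no ``paper's own proof'' to line up against. Your argument is, as far as I can reconstruct, essentially the proof in the cited reference: build the cobordism $E=(M_K\times I)\cup_{S^1\times D^2}(M_J\times I)$ with $\partial E=M_{P(J)}\sqcup(-M_K)\sqcup(-M_J)$, extend the representation using the compatibility $\varphi_K(\eta)=\varphi_J(\mu_J)$, and kill both signature terms --- the ordinary one because the two capped Seifert surfaces live in disjoint product slices, the $L^2$ one because Mayer--Vietoris over the Ore localization $\mathcal K$ of a PTFA group shows $H_2(E;\mathcal K)=0$ once one knows $H_*(M_K;\mathcal K)=H_*(M_J;\mathcal K)=H_*(S^1;\mathcal K)=0$ for nontrivial representations (\cite{cochranKnotConcordanceWhitney2003}, Prop.~2.11 plus duality). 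The main structure is right.

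Two small points worth tightening. First, the PTFA hypothesis is genuinely used in your argument but is not literally in the statement as set up in the paper, which only fixes a metabelian $\Gamma$; it is implicit (the paper later notes the quotient groups $\pi_1(M_K)/\tilde P$ it feeds into this lemma are PTFA), but you should state the hypothesis explicitly since the $\mathcal K$-homology vanishing and the identification of $\sigma^{(2)}_\Gamma$ with a skew-field signature break down without it. Second, your treatment of the degenerate case is slightly off: you cannot ``use $M_K\times I$ in place of $E$,'' since $\partial(M_K\times I)$ does not contain $M_{P(J)}$ and hence cannot relate $\rho(P(J),\varphi)$ to $\rho(K,\varphi_K)$. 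What actually happens is cleaner than you suggest. If $\varphi_K$ is trivial then so is $\varphi_J$ (because $\mu_J$ normally generates and $\varphi_J(\mu_J)=\varphi_K(\eta)=1$), so $\varphi$ itself is trivial and both sides vanish. The only genuine degenerate case is $\varphi_J$ trivial with $\varphi_K$ nontrivial; then $\varphi_E$ kills the core of the glued torus, the Mayer--Vietoris over $\mathcal K$ gives $H_2(E;\mathcal K)\cong\mathcal K$ generated by $\hat\Sigma_J$ (coming from $H_2(M_J;\mathcal K)\cong\mathcal K$ with trivial coefficients), and this class has vanishing self-intersection, so $\sigma^{(2)}_\Gamma(E)=0$ still holds with the same $E$. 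So no separate cobordism is needed; keeping $E$ and redoing the coefficient computation suffices.
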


Recall the setup from Section \ref{Intro}: we have satellite operators $P_0$ and $P_1$ given by $K_0\sqcup\eta_0$ and $K_1\sqcup\eta_1$, respectively, which are homotopically related. That is, $K_0$ and $K_1$ are concordant, and $\eta_0\times\{0\}$ and $\eta_1\times\{1\}$ are homotopic through the complement of a concordance between $K_0$ and $K_1$. We showed in Theorem \ref{1solvable} that for any $J$, the difference $P_0(J)\#-P_1(J)$ is $(1)$-solvable.

\begin{prop}
\label{RhoInvObstrFails}
The set
	$$\{\rho(P_0(J)\#-P_1(J),\varphi_P) | P \text{ is a metabolizer for } \cA_0(P_0(J)\#-P_1(J))\}$$
contains 0. In other words, Theorem \ref{RhoInvObstr} fails to obstruct the $(1.5)$-solvability of the knot $P_0(J)\#-P_1(J)$.
\end{prop}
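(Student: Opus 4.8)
The plan is to follow the structure of the proof of Proposition \ref{CGObstrFails}, with the rational Alexander module, the Blanchfield form, and metabelian $\rho$-invariants playing the roles of branched covers, linking forms, and Casson--Gordon Witt classes. The first ingredient is the satellite analogue of the Litherland-type formulas of Section \ref{CGinvariants}. For a winding number $1$ satellite operator $P_{K,\eta}$ (which covers the Mazur pattern), a Mayer--Vietoris argument applied to the decomposition $M_{P(J)}=(M_K\setminus\nu(\eta))\cup_{T^2}(S^3\setminus\nu(J))$ — using that the gluing torus is swept out by the meridian of $\eta$, which is rationally null-homologous on both sides — yields a canonical isomorphism $\cA_0(P(J))\cong\cA_0(K)\oplus\cA_0(J)$ under which the Blanchfield form is the orthogonal sum $\Bl_0(K)\oplus\Bl_0(J)$; for general winding number $w$ one instead gets $\cA_0(K)$ together with a transfer of $\cA_0(J)$ along $t\mapsto t^w$, paralleling Litherland's formula. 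Combining this with the analogous (standard) splitting for connected sums gives an orthogonal decomposition
\[
  \cA_0(P_0(J)\#-P_1(J))\;\cong\;\cA_0(K_0\#-K_1)\;\oplus\;\cA_0(J\#-J).
\]

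Next I would build an explicit metabolizer and decompose the $\rho$-invariant along it. Since $K_0$ and $K_1$ are concordant, $K_0\#-K_1$ is topologically slice; fixing a slice disk $\Delta_K$ and setting $P_K=\ker(\cA_0(K_0\#-K_1)\to H_1(B^4\setminus\nu(\Delta_K);\Q[t^{\pm1}]))$ gives a metabolizer for $\Bl_0(K_0\#-K_1)$, and similarly a ribbon disk $\Delta_J$ for $J\#-J$ gives a metabolizer $P_J$ for $\Bl_0(J\#-J)$. By the orthogonal splitting above, $P:=P_K\oplus P_J$ is a metabolizer for $\Bl_0(P_0(J)\#-P_1(J))$; set $\Gamma:=\pi_1(M_{P_0(J)\#-P_1(J)})/\tilde P$ and let $\varphi_{\tilde P}$ be the associated metabelian representation. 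Applying Lemma \ref{RhoSatelliteFormula} first to the connected sum (viewed as the satellite operator $(-P_1(J))\cup\mu$ applied to $P_0(J)$) and then to each $P_\epsilon$ (viewed as a satellite of $J$), and using the bijection preceding Lemma \ref{RhoSatelliteFormula} — with a coherent choice of basepoint as around Figure \ref{fig:BaseptChoice}, and the fact that the meridians of $P_0(J)$, $P_1(J)$, $K_0$, $K_1$, $J$ are all freely homotopic in $M_{P_0(J)\#-P_1(J)}$ so that the requisite meridian-compatibilities hold — I would obtain induced representations $\varphi_K\colon\pi_1(M_{K_0\#-K_1})\to\Gamma$ and $\varphi_J\colon\pi_1(M_{J\#-J})\to\Gamma$ with
\[
  \rho(P_0(J)\#-P_1(J),\varphi_{\tilde P})=\rho(K_0\#-K_1,\varphi_K)+\rho(J\#-J,\varphi_J).
\]

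Finally I would show both terms vanish. Since $P\cap\cA_0(J\#-J)=P_J$, the representation $\varphi_J$ kills $\widetilde{P_J}$ and so factors through $\pi_1(M_{J\#-J})/\widetilde{P_J}$; because $P_J$ is the metabolizer of the slice disk $\Delta_J$, the standard argument shows $\varphi_J$ extends over $\pi_1(B^4\setminus\nu(\Delta_J))$, whence $\rho(J\#-J,\varphi_J)=0$ by the third listed property of $\rho$-invariants (a slice knot, a PTFA target, a representation extending over a slice disk complement). For the $K$-term the same reasoning gives $\rho(K_0\#-K_1,\varphi_K)=0$ once $\varphi_K$ extends over $\pi_1(B^4\setminus\nu(\Delta_K))=\pi_1((S^3\times I)\setminus\nu(C))$; since $P\cap\cA_0(K_0\#-K_1)=P_K$ this reduces to checking that $\varphi_K$ respects the free homotopy from $\eta_0$ to $\eta_1$ in the concordance complement (this is exactly homotopy-relatedness), and — as the precise analogue of the identity $\chi_{K_0}(\tilde\eta_0^i)=\chi_{-K_1}(-\tilde\eta_1^i)$ in the proof of Proposition \ref{CGObstrFails} — by the satellite bijection both $\varphi_K(\eta_0)$ and $\varphi_K(\eta_1)$ equal $\varphi_J$ evaluated on a meridian of $J$, hence agree. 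This gives $\rho(P_0(J)\#-P_1(J),\varphi_{\tilde P})=0$, so $P$ is the required metabolizer. The main obstacle is not any single calculation but the bookkeeping underlying the first two steps: pinning down the orthogonal direct-sum splitting of $\cA_0$ and $\Bl_0$ along the satellite and connected-sum decompositions (cleanly for winding number $\pm1$, via a transfer as in Section \ref{CGinvariants} in general), and then verifying that the representations $\varphi_{\tilde P}$ induces on the various pieces genuinely coincide with the metabelian representations attached to $P_K$ and $P_J$, which requires tracking the common basepoint and the identifications of all the relevant meridians and longitudes under the gluings.
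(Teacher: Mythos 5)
Your overall strategy matches the paper's: decompose $\cA_0$ and $\Bl_0$ along the satellite and connect-sum tori via Litherland/Livingston, take the metabolizer $P = P_K\oplus P_J$ coming from slice disks for $K_0\#-K_1$ and $J\#-J$, apply Lemma \ref{RhoSatelliteFormula} by cutting along the swallow-follow torus and the two satellite tori, reduce to $\rho(K_0\#-K_1,\cdot) + \rho(J\#-J,\cdot)$, and conclude each term vanishes because the restricted representations extend over the slice disk complements. This is the same decomposition, the same metabolizer, the same satellite formula, and the same vanishing argument the paper uses.

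There is, however, a genuine gap in the middle. You assert that the induced representation $\varphi_J$ on $\pi_1(M_{J\#-J})$ exists because ``the meridians of $P_0(J)$, $P_1(J)$, $K_0$, $K_1$, $J$ are all freely homotopic in $M_{P_0(J)\#-P_1(J)}$ so that the requisite meridian-compatibilities hold.'' But the compatibility needed to glue the representations on the two copies of $S^3\setminus\nu(J)$ into one on $\pi_1(M_{J\#-J})$ is $\psi_J(\mu_J) = \psi_{-J}(\mu_{-J})$, and under the satellite gluings these two meridians are identified with $\eta_0$ and $-\eta_1$ respectively, which are \emph{not} freely homotopic in the 3-manifold $M_{P_0(J)\#-P_1(J)}$ --- they are merely homologous. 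If they were freely homotopic, the conclusion would hold for any representation at all and the metabolizer condition would be vacuous. They only have the same image under $\psi$ because $\eta_0\cdot(-\eta_1)$ lies in the metabolizer $P_K$ (since $\eta_0$ and $\eta_1$ cobound an immersed annulus in $(S^3\times I)\setminus\nu(C)$, i.e.\ homotopy-relatedness), hence in $\tilde P = \ker\psi$. Your later appeal to homotopy-relatedness is then circular: ``both $\varphi_K(\eta_0)$ and $\varphi_K(\eta_1)$ equal $\varphi_J$ evaluated on a meridian of $J$, hence agree'' presupposes that $\varphi_J$ is already defined on all of $\pi_1(M_{J\#-J})$ --- exactly the fact being established. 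The paper orders the logic the right way: first cut along all three tori to obtain four terms $\rho(K_0,\psi_{K_0}) + \rho(J,\psi_J) + \rho(-K_1,\psi_{-K_1}) + \rho(-J,\psi_{-J})$ with the gluing relations; then use that $\eta_0\cdot(-\eta_1)$ is killed by $\psi$ to deduce $\psi_{K_0}(\eta_0)=\psi_{-K_1}(-\eta_1)$, hence $\psi_J(\mu_J)=\psi_{-J}(\mu_{-J})$; and only then reglue into $\psi_{K_0\#-K_1}$ and $\psi_{J\#-J}$, verifying via a commutative diagram that their kernels are the chosen metabolizers. Straightening out this logical order is the substance of the proof, not mere bookkeeping.
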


Before beginning the proof of Proposition \ref{RhoInvObstrFails}, we will fix some notation which will make it easier to work with metabelian groups.

\begin{definition}
The {\it metabelianization} of a group $G$ is defined to be $G/G^{(2)}$ and will be denoted $G^\metab$.
\end{definition}

Since homomorphisms to a metabelian group factor through the metabelianization of the domain, we can work with homomorphisms from metabelianizations. Since the longitude of a knot $K$ lies in the second commutator subgroup of the knot group, we have $\pi_1(K)^\metab\cong \pi_1(M_K)^\metab$, and so we can replace $\pi_1(M_K)$ with $\pi_1(K)$ or $\pi_1(K)^\metab$ in the discussion above. It will be easier to think in terms of knot groups than in terms of fundamental groups of 0-surgeries, and this is what we will do for the remainder of this section.

We have the following group theoretic lemma:

\begin{lemma}
\label{MetabelianLemma}
There is a canonical isomorphism $(G/G^{(2)})^{(1)} \cong G^{(1)}/G^{(2)}$
\end{lemma}

This lemma gives us the following corollary since the Alexander module of a knot group is the abelianization of its commutator subgroup:

\begin{cor}
The Alexander module of a knot is the commutator subgroup of the metabelianization of the knot group.
\end{cor}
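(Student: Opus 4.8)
The plan is to read the corollary off directly from Lemma~\ref{MetabelianLemma} together with the classical description of the Alexander module recalled just above the statement. Let $G=\pi_1(S^3\setminus\nu(K))$ denote the knot group. As noted there, the Alexander module $\cA(K)$ is the abelianization $(G^{(1)})^{\ab}=G^{(1)}/[G^{(1)},G^{(1)}]$ of the commutator subgroup of $G$; since $[G^{(1)},G^{(1)}]=G^{(2)}$ by the definition of the derived series, this says $\cA(K)=G^{(1)}/G^{(2)}$.

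Now apply Lemma~\ref{MetabelianLemma} to this $G$: it provides a canonical isomorphism $(G/G^{(2)})^{(1)}\cong G^{(1)}/G^{(2)}$. By definition $G^{\metab}=G/G^{(2)}$ is the metabelianization of $G$, so the left-hand side is exactly the commutator subgroup $(G^{\metab})^{(1)}$ of the metabelianization. Composing with the identification of the previous paragraph yields $(G^{\metab})^{(1)}\cong\cA(K)$, which is the content of the corollary.

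The one point I would spell out — and the only conceivable subtlety, though not really an obstacle — is that this identification is one of $\Z[t^{\pm1}]$-modules, not merely of abelian groups. The module structure on $\cA(K)=G^{(1)}/G^{(2)}$ is induced by conjugation by a lift of the generator $t$ of $G/G^{(1)}\cong\Z$; the module structure on $(G^{\metab})^{(1)}$ is induced by conjugation by a lift of the generator of $G^{\metab}/(G^{\metab})^{(1)}\cong G/G^{(1)}\cong\Z$. The canonical isomorphism of Lemma~\ref{MetabelianLemma} is induced by the composite $G^{(1)}\twoheadrightarrow G^{(1)}/G^{(2)}\hookrightarrow G/G^{(2)}$, which is equivariant for these conjugation actions, so the isomorphism is $\Z[t^{\pm1}]$-linear. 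Since everything here is formal, I do not anticipate any real difficulty; the substance of the argument is entirely contained in Lemma~\ref{MetabelianLemma} and in the textbook identification $\cA(K)=G^{(1)}/G^{(2)}$.
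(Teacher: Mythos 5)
Your proof is correct and follows the same route as the paper: identify $\cA(K)$ with $G^{(1)}/G^{(2)}$ (abelianization of the commutator subgroup) and then invoke Lemma~\ref{MetabelianLemma} to rewrite this as $(G/G^{(2)})^{(1)}=(G^{\metab})^{(1)}$. Your additional check that the isomorphism is $\Z[t^{\pm1}]$-linear, not just an isomorphism of abelian groups, is a worthwhile detail that the paper leaves implicit.
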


\begin{proof}[Proof of Lemma \ref{MetabelianLemma}]
Throughout, we will denote the equivalence class of an element $x \in G \text{ or } G^{(1)}$ in $G/G^{(2)}$ or $G^{(1)}/G^{(2)}$ by $\bar{x}$.

Define a map $(G/G^{(2)})^{(1)} \to G^{(1)}/G^{(2)}$ by taking a commutator $[\bar{a},\bar{b}]$ of elements $\bar{a},\bar{b} \in G/G^{(2)}$ with representatives $a,b \in G$ to $\overline{[a,b]} \in G^{(1)}/G^{(2)}$. If we had chosen different representatives $ag,bg \in G$ for $\bar{a}$ and $\bar{b}$ (where $g \in G^{(2)}$), we have $\overline{[ag,bg]}=\overline{[a,b]}$ since we are working mod $G^{(2)}$, and so this map is well-defined.

Conversely, we can define a map $G^{(1)}/G^{(2)} \to (G/G^{(2)})^{(1)}$ by taking an element $\overline{[a,b]} \in G^{(1)}/G^{(2)}$ where $a,b \in G$ to the commutator $[\bar{a},\bar{b}]$. If we had chosen a different representative $[a,b]g$ for $\overline{[a,b]}$ (where $g \in G^{(2)}$), we see that its image is given by $[\bar{a},\bar{b}]\prod[\bar{x_i},\bar{y_i}]$ where $x_i,y_i \in G^{(1)}$ since $g \in G^{(2)}$. But, in $G/G^{(2)}$, commutators commute, and so this is equal to $[\bar{a},\bar{b}]$, and thus the map is well-defined.

These two maps are inverses, and so $(G/G^{(2)})^{(1)} \cong G^{(1)}/G^{(2)}$.
\end{proof}

We will also frequently use the fact that any group homomorphism $\varphi : H \to G$ induces a homomorphism on the metabelianizations $\varphi^\metab : H^\metab \to G^\metab$ (since $\varphi(H^{(2)}) \subseteq G^{(2)}$). This appears most frequently when $H$ is a subgroup of $G$ and $\varphi$ is the inclusion map.

We will now recall some fundamental facts about the Alexander modules and Blanchfield forms of satellite knots following \cite{livingstonAbelianInvariantsSatellite1985} (see also \cite{seifertHOMOLOGYINVARIANTSKNOTS1950}).

\begin{theorem}[Theorem 2 in \cite{livingstonAbelianInvariantsSatellite1985}]
\label{BlanchfieldSatelliteProperties}
The Alexander matrix (ie a matrix giving a $\Lambda$-module presentation for the Alexander module) of a satellite knot corresponding to a Seifert surface obtained from Seifert surfaces for $K$ and $J$ is given by
	$$A_{P(J)}(t) = A_K(t) \oplus A_J(t^w)$$
where $\oplus$ denotes the block sum, and $w=\lk(K,\eta)$ as in Section \ref{CGinvariants}. In particular, the Alexander module is given by
	$$\cA(P(J))\cong\cA(K) \oplus w\cA(J)$$
where, as an abelian group, $w\cA(J)$ is the direct sum of $w$ copies of $\cA(J)$, but carries the $\Lambda$-module structure defined by
	$$t\cdot(a_1,a_2,...,a_w)=(t \cdot a_w, a_1,...,a_{w-1}).$$
Moreover, after choosing suitable bases, the matrix for the Blanchfield form has a similar decomposition:
	$$B_{P(J)}(t) = B_K(t) \oplus B_J(t^w).$$
\end{theorem}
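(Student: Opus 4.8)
I would deduce the module and Blanchfield‑form statements directly from the infinite cyclic cover, and obtain the matrix identity as a byproduct of the Seifert surface that realizes the same splitting. Write the exterior of $P(J)$ as $X = (E_\eta\setminus\nu(K))\cup_T(S^3\setminus\nu(J))$, where $T=\partial E_\eta=\partial\nu(J)$ and the satellite gluing identifies the meridian $\mu_\eta$ of $\eta$ with the preferred longitude $\lambda_J$ of $J$ and the meridian of $E_\eta$ with the meridian $\mu_J$ of $J$. The abelianization $\phi\colon\pi_1(X)\to\Z$ sends the meridian of $P(J)$ to $t$, sends $\lambda_J=\mu_\eta$ to $0$ (it is a longitude of $P(J)$), and sends $\mu_J$ to $t^{w}$ (since $\mu_J$ bounds a meridian disk of $E_\eta$ meeting $K$ in $w$ points, so $\lk(\mu_J,P(J))=w$). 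Passing to the infinite cyclic cover $\widetilde X\to X$ and restricting the decomposition gives $\widetilde X=\widetilde X_1\cup_{\widetilde T}\widetilde X_2$ with $\widetilde X_1=p^{-1}(E_\eta\setminus\nu(K))$, $\widetilde X_2=p^{-1}(S^3\setminus\nu(J))$.

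The two pieces are transparent. On $S^3\setminus\nu(J)$ the homomorphism $\phi$ is $w$ times the ordinary abelianization, so $\widetilde X_2$ is $|w|$ disjoint copies of the infinite cyclic cover $\widetilde Y$ of $S^3\setminus\nu(J)$, cyclically permuted by $t$ with $t^{|w|}$ acting as the covering translation on each copy; hence $H_1(\widetilde X_2)\cong w\cA(J)$ with exactly the cyclic shift $\Lambda$‑module structure of ``$\cA(J)$ with variable $t^{w}$'', and likewise $\widetilde T$ is $|w|$ copies of $S^1\times\mathbb R$ with $H_1(\widetilde T)\cong\Lambda/(t^{|w|}-1)$ generated by the cyclic family of lifts of $\lambda_J=\mu_\eta$. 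On the other side, since $\lk(\mu_\eta,K)=0$ the cover $\widetilde X_1$ is exactly the infinite cyclic cover of $S^3\setminus\nu(K)$ restricted to $E_\eta\setminus\nu(K)$; a Mayer--Vietoris computation for $S^3\setminus\nu(K)=(E_\eta\setminus\nu(K))\cup\nu(\eta)$ over that cover (using that the lift of $\nu(\eta)$ is a disjoint union of contractible pieces) identifies $\cA(K)$ with $H_1(\widetilde X_1)$ modulo the image of the lifts of $\mu_\eta$.

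Now I would assemble these via Mayer--Vietoris for $\widetilde X=\widetilde X_1\cup_{\widetilde T}\widetilde X_2$. The key input is the classical fact that the $0$‑framed longitude of a knot lies in the second derived subgroup of its group — equivalently, a pushed‑in Seifert surface for $J$ lifts to $\widetilde Y$ because every curve on a Seifert surface has linking number $0$ with $J$ — so the class of $\lambda_J$ vanishes in $\cA(J)=H_1(\widetilde Y)$. Hence $H_1(\widetilde T)\to H_1(\widetilde X_2)$ is zero, while $H_1(\widetilde T)\to H_1(\widetilde X_1)$ has image precisely the kernel of $H_1(\widetilde X_1)\twoheadrightarrow\cA(K)$ found above, and the map $H_1(\widetilde X)\to\widetilde H_0(\widetilde T)$ vanishes because each component of $\widetilde T$ injects on $H_0$ into $\widetilde X_2$. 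Therefore $\cA(P(J))=H_1(\widetilde X)\cong\big(H_1(\widetilde X_1)/\mathrm{image}\big)\oplus H_1(\widetilde X_2)\cong\cA(K)\oplus w\cA(J)$ with the stated module structures; rewriting the cell attachments (equivalently, building the Seifert surface of $P(J)$ from a Seifert surface for $K$ inside $E_\eta$ capped off by $|w|$ parallel copies of a Seifert surface for $J$) promotes this to the matrix identity $A_{P(J)}(t)=A_K(t)\oplus A_J(t^{w})$. For the Blanchfield form the same splitting is orthogonal: if $x$ is supported in $\widetilde X_1$ and $y$ in $\widetilde X_2$, a $\Lambda$‑multiple of $y$ bounds a $2$‑chain that can be taken inside $\widetilde X_2$, hence disjoint from the interior of $\widetilde X_1$ where $x$ lives, so $\Bl_{P(J)}(x,y)=0$; and the restriction of $\Bl_{P(J)}$ to each summand is computed by intersections occurring entirely inside $\widetilde X_1$, respectively inside a single copy of $\widetilde Y$, so it equals $B_K(t)$, respectively $B_J(t^{w})$, yielding $B_{P(J)}(t)=B_K(t)\oplus B_J(t^{w})$.

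The step I expect to be the main obstacle is the identification of the ``$K$‑side'': proving cleanly that $\cA(K)$ is $H_1(\widetilde X_1)$ modulo the lifts of $\mu_\eta$, and — more delicately — that after this quotient the restricted Blanchfield form is genuinely $B_K(t)$ rather than a modification by the $\mu_\eta$‑classes. Those classes are killed in two different ways (by $\nu(\eta)$ when one builds $S^3\setminus\nu(K)$, and by $S^3\setminus\nu(J)$ when one builds $X$), and one must check these produce compatible pairings; that the lifts of $\lambda_J=\mu_\eta$ bound $2$‑chains in $\widetilde X_2$, disjoint from $\widetilde X_1$, is what makes this go through. A secondary but essential ingredient is the fact that $\lambda_J\in\pi_1(S^3\setminus\nu(J))^{(2)}$, without which the $\widetilde T$‑class would contribute a spurious summand; its proof is short. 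Everything else is routine bookkeeping with the $|w|$‑fold cover and $\Lambda$‑module algebra.
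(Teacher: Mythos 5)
The paper does not prove this theorem; it cites it directly from Livingston's \emph{Abelian invariants of satellite knots}, so there is no in-paper argument to compare against. That said, your proof sketch is sound in outline and is worth evaluating on its own terms.

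Your route is the infinite-cyclic-cover / Mayer--Vietoris route: decompose $X = (E_\eta\setminus\nu(K))\cup_T(S^3\setminus\nu(J))$, lift to $\widetilde X$, identify $\widetilde X_2$ as $|w|$ cyclically permuted copies of $\widetilde Y$ giving $w\cA(J)$, identify $H_1(\widetilde X_1)$ modulo the lifts of $\mu_\eta$ with $\cA(K)$ by a second Mayer--Vietoris against $\widetilde{\nu(\eta)}$, and use that the lifts of $\lambda_J=\mu_\eta$ die in $H_1(\widetilde X_2)$ (since $\lambda_J$ bounds a lifted Seifert surface) so that the first Mayer--Vietoris sequence splits cleanly. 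All of this is correct, and the bookkeeping you do for $\widetilde T$ and its $H_0$ is the right way to see the connecting map vanishes. Livingston's own argument, by contrast, is organized around the explicit Seifert surface $F = (F_K\cap E_\eta)\cup|w|F_J$ and the Seifert/Alexander matrix it produces --- which is the natural way to obtain the literal matrix statement $A_{P(J)}(t)=A_K(t)\oplus A_J(t^w)$ and, via the usual $V\mapsto(1-t)(tV-V^T)^{-1}$ translation, the matrix statement $B_{P(J)}(t)=B_K(t)\oplus B_J(t^w)$. Your cover argument gives the module and form as abstract $\Lambda$-modules with a pairing; converting that into the theorem's ``matrix associated to this particular Seifert surface'' claim is exactly the step you gesture at with ``rewriting the cell attachments,'' and that step really does want the Seifert-surface picture made explicit.

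The one genuine gap, which you correctly flag yourself, is the $K$-side of the Blanchfield form. Orthogonality of the two summands is easy (a $\Lambda$-multiple of a class in $\widetilde X_2$ bounds inside $\widetilde X_2$, hence misses a cycle in the interior of $\widetilde X_1$). But showing that the restriction of $\Bl_{P(J)}$ to the $\cA(K)$ summand equals $\Bl_K$ requires comparing chains that compute the pairing in $\widetilde X$ with chains that compute it in $\widetilde{S^3\setminus\nu(K)}$: in the former, a bounding $2$-chain for a multiple of $x\in H_1(\widetilde X_1)$ may run into $\widetilde X_2$, while in the latter it may run into $\widetilde{\nu(\eta)}$, and one must check these yield the same equivariant intersection number. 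The observation that the lifts of $\mu_\eta=\lambda_J$ bound $2$-chains inside $\widetilde X_2$ (the lifted Seifert surfaces for $J$) is the right ingredient --- it lets you swap the piece of the bounding chain in $\widetilde X_2$ for a piece in $\widetilde{\nu(\eta)}$ at the cost of chains supported on $\widetilde T$ which contribute nothing to the intersection with a cycle pushed off $\widetilde T$ --- but that chase has to be carried out, not just asserted. As written your argument stops just short of this; filled in, it would reproduce Livingston's Theorem 2.
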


Note that the $\Lambda$-module $w\cA(J)$ can be seen as the first homology of the cover corresponding to the multiple of the abelianization map $w\cdot\ab:\pi_1(J)\to\Z$ (taken with $\Z$ coefficients, and coming with a $\Z$ action giving a $\Lambda$-module structure). Note also that if we restrict to the $\Z[t^w]$-module structure, then $w\cA(J)$ is the direct sum of $w$ copies of $\cA(J)$, where $t^w$ acts as the original $t$-action. Also, the map $w\cA(J)\to\pi_1(J)^\metab$ given by the inclusion map on each ($\Z$-module) $\cA(J)$-summand makes the following diagram commute (all maps represented by solid arrows are inclusions or induced by inclusions):

\setlength\mathsurround{0pt}

$$\begin{tikzcd}
	\cA(P(J)) \arrow{r}
		& \pi_1(P(J))^\metab\\
	w\cA(J) \arrow{u} \arrow[r,dashed]
		& \pi_1(J)^\metab \arrow{u}
\end{tikzcd}.$$

\setlength\mathsurround{0.8pt}

Implicit in the proof in \cite{livingstonAbelianInvariantsSatellite1985} of Theorem \ref{BlanchfieldSatelliteProperties} is that we have the following commutative diagram:

\setlength\mathsurround{0pt}

\begin{equation}
\begin{tikzcd}
	0 \arrow{r}
		& \cA(K) \arrow{r}{i} \arrow{d}{i}
		& \pi_1(K)^\metab \arrow{r}{\ab} \arrow{d}
		& \Z \arrow{r} \arrow{d}{\id}
		& 0\\
	0 \arrow{r}
		& \cA(P(J)) \arrow{r}{i}
		& \pi_1(P(J))^\metab \arrow{r}{\ab}
		& \Z \arrow{r}
		& 0\\
	& w\cA(J) \arrow{u}{i} \arrow{r}
		& \pi_1(J)^\metab \arrow{u} \arrow{r}{w\cdot\ab}
		& \Z \arrow{u}{\id}
\end{tikzcd}
\tag{$*$}
\label{eq:SatCommDiag}
\end{equation}

\setlength\mathsurround{0.8pt}

with exact rows, where the maps labeled with ``$i$'' are inclusion maps, the maps labeled with ``$\ab$'' are abelianization maps, the vertical maps to $\pi_1(P(J))^\metab$ are induced by the inclusion of the appropriate submanifolds, and the map $w\cA(J)\to\pi_1(J)^\metab$ is the map described above.

In the special case of a connect sum, we have
	$$\cA(K\#J)=\cA(K)\oplus\cA(J)$$
and similarly for the Blanchfield form and the commutative diagram, setting $w=1$.

\begin{proof}[Proof of Proposition \ref{RhoInvObstrFails}]
We have the following isomorphisms of Alexander modules
\begin{align*}
	\cA(P_0(J)\#-P_1(J))
		& \cong \cA(K_0) \oplus w\cA(J) \oplus \cA(-K_1) \oplus w \cA(-J)\\
		& \cong \cA(K_0\#-K_1) \oplus w \cA(J \# -J)
\end{align*}
and the corresponding decompositions of the Blanchfield forms. There are metabolizers of $\cA(K_0\#-K_1)$ and $w\cA(J\#-J)$ coming from slice disks, whose direct sum is then a metabolizer for $\cA(P_0(J)\#-P_1(J))$. Let $\psi:\pi_1(P_0(J)\#-P_1(J))^\metab\to\Gamma$ be the quotient map killing the metabolizer as above.

We will now use several applications of the satellite formula for metabelian $\rho$ invariants, cutting up $S^3$ along the three satellite tori $T_\#$, $T_0$, and $T_1$ shown in Figure \ref{fig:BaseptChoice2}. (Note that each torus is isotopic to the torus depicted in Figureref{fig:BaseptChoice} with respect to the corresponding satellite operations.) Notice that we will need to work with the fundamental group of the resulting submanifolds, and so want to choose an appropriate basepoint. So, the tori have been chosen to coincide at a particular ``spot,'' and we choose the basepoint to be in this spot.

\begin{figure}
	\centering
	\includegraphics[scale=0.7]{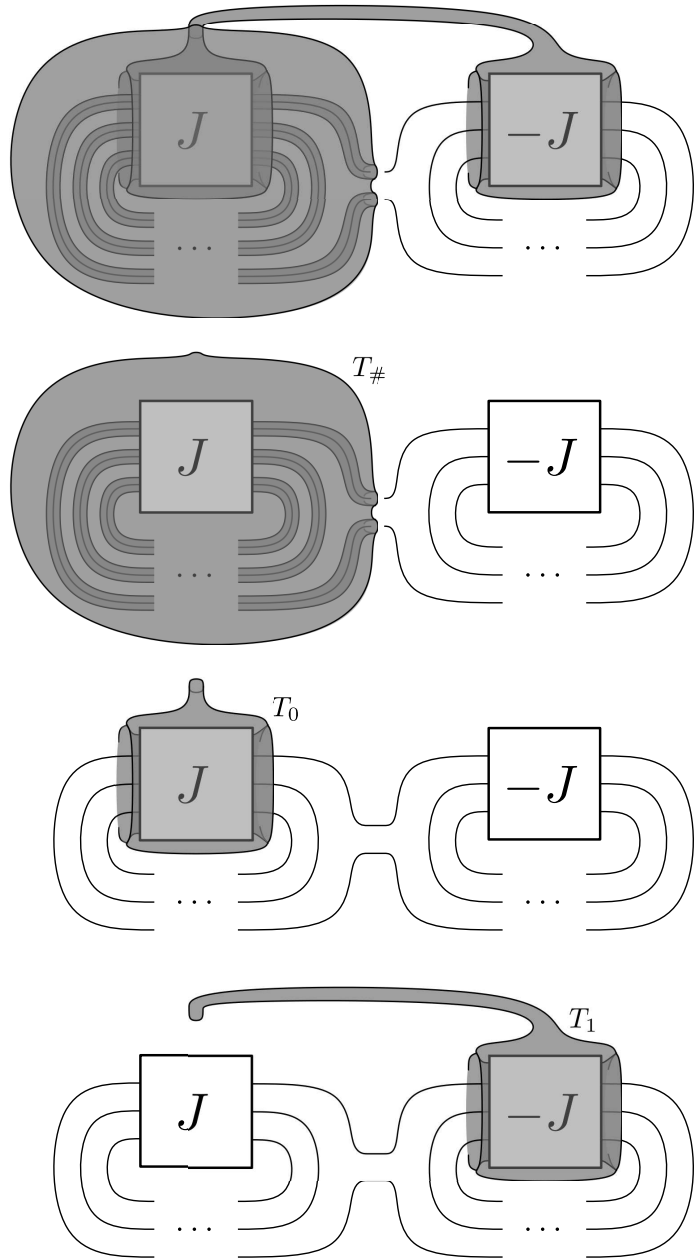}
	\caption{In the top diagram, $P_0(J)\#-P_1(J)$ is pictured with the three satellite tori $T_\#$, $T_0$, and $T_1$. In the bottom three diagrams, each of the tori is pictured individually. For the metabelian $\rho$ invariant calculations, take the basepoint to be in the spot common to all three tori.}
	\label{fig:BaseptChoice2}
\end{figure}

From the satellite formula for metabelian $\rho$ invariants, we have
	$$\rho(P_0(J)\#-P_1(J),\psi)=\rho(P_0(J),\psi_{P_0(J)})+\rho(-P_1(J),\psi_{-P_1(J)})$$
where $\psi_{P_0(J)}$ and $\psi_{-P_1(J)}$ are the homomorphisms obtained by restricting to the submanifolds which result from cutting $S^3$ along the ``swallow-follow'' torus $T_\#$ in Figure \ref{fig:BaseptChoice2}. We have the relation
	$$\psi_{P_0(J)}(\mu_{P_0(J)})=\psi_{-P_1(J)}(\mu_{-P_1(J)})$$
(since these are the same curve on $T_\#$).

After filling in the boundary components as we did in the discussion preceding Lemma \ref{RhoSatelliteFormula}, we can cut along the other two satellite tori $T_\epsilon$, $\epsilon\in\{0,1\}$ and use the satellite formula for metabelian $\rho$ invariants again to obtain:
	$$\rho(P_0(J)\#-P_1(J),\psi)=\rho(K_0,\psi_{K_0})+\rho(J,\psi_J)+\rho(-K_1,\psi_{-K_1})+\rho(-J,\psi_{-J})$$
with $\psi_{K_0}$, $\psi_{-K_1}$, $\psi_J$, and $\psi_{-J}$ coming from restrictions to submanifolds as before. From our gluings we must have the following identities:
\begin{align*}
	\psi_{K_0}(\eta_0)
		& = \psi_J(\mu_J)\\
	\psi_{K_0}(\mu_{K_0})
		& = \psi_{-K_1}(\mu_{K_1})\\
	\psi_{-K_1}(-\eta_1)
		& = \psi_{-J}(\mu_{-J})
\end{align*}
Also, note that $\mu_{K_0}$ and $\mu_{-K_1}$ are the same curves as $\mu_{P_0(J)}$ and $\mu_{-P_1(J)}$ from before, and the middle relation follows from the relation coming from the first application of the satellite formula.

Now, since $P_0$ and $P_1$ have the same winding numbers, the concatenation $\eta_0\cdot(-\eta_1)$ is nullhomologous in the complement of $K_0\#-K_1$, and so lies in $\cA(K_0\#-K_1)$ (recall that $-\eta_1$ denotes the mirror image of $\eta_1$ inside the complement of the mirror image of $K_1$ in the connect sum $K_0\#-K_1$). Moreover, by assumption $\eta_0$ and $\eta_1$ cobound an immersed annulus in the complement of the slice disk for $K_0\#-K_1$ and so $\eta_0\cdot(-\eta_1)$ is nullhomotopic in the complement of the slice disk, and so lies in the kernel of the inclusion induced map on $H_1(-,\Q[t^{\pm 1}])$. Therefore $\psi(\eta_0\cdot(-\eta_1))=1$, and so
	$$\psi_{K_0}(\eta_0)=\psi(\eta_0)=\psi(-\eta_1)=\psi_{-K_1}(-\eta_1).$$
Thus, since $\psi_{K_0}(\eta_0)=\psi_J(\mu_J)$ and $\psi_{-K_1}(-\eta_1)=\psi_{-J}(\mu_{-J})$, we must have
	$$\psi_J(\mu_J)=\psi_{-J}(\mu_{-J}).$$
Therefore there are maps $\psi_{K_0\#-K_1}$ and $\psi_{J\#-J}$ on $\pi_1(K_0\#-K_1)^\metab$ and $\pi_1(J\#-J)^\metab$, respectively, whose restrictions to the appropriate submanifolds are $\psi_{K_0}$, $\psi_{-K_1}$, $\psi_J$, and $\psi_{-J}$, and so we have:
\begin{align*}
	\rho(P_0(J)\#-P_1(J))
		& = \rho(K_0,\psi_{K_0})+\rho(J,\psi_J)+\rho(-K_1,\psi_{-K_1})+\rho(-J,\psi_{-J})\\
		& = \rho(K_0\#-K_1,\psi_{K_0\#-K_1})+\rho(J\#-J,\psi_{J\#-J})
\end{align*}

Putting together the commutative diagrams corresponding to the various satellite operations as in (\ref{eq:SatCommDiag}), we obtain the commutative diagram shown in Figure \ref{fig:RhoDiagram} (we have omitted the maps $\psi_{K_0}$, $\psi_{-K_1}$, $\psi_J$, and $\psi_{-J}$ to avoid cluttering up the diagram too much). The maps of (multiples of) Alexander modules are all inclusions of summands. This diagram commutes by the commutativity of the diagrams corresponding to the various satellite operations as in (\ref{eq:SatCommDiag}), plus the definitions of $\psi$, $\psi_{K_0\#-K_1}$, $\psi_{J\#-J}$, $\psi_{K_0}$, $\psi_{-K_1}$, $\psi_J$, and $\psi_{-J}$. In particular, the two pentagons with black arrows commute, and since the maps of (multiples of) Alexander modules are inclusions of summands, we see that the kernels of $\psi_{K_0\#-K_1}$ and $\psi_{J\#-J}$ are exactly the images of our chosen metabolizers of $\cA(K_0\#-K_1)$ and $w\cA(J\#-J)$, respectively.

\setlength\mathsurround{0pt}

\begin{figure}
$$\rotatebox{90}{\begin{tikzcd}[ampersand replacement = \&, column sep = small]
	\& \cA(K_0\#-K_1) \arrow[rrr]
			\arrow[dd]
		\&\&\& \pi_1(K_0\#-K_1)^\metab \arrow[rrdd, bend left = 45, dashed, "\psi_{K_0\#-K_1}"]\\
	\cA(K_0) \arrow[ru, gray]
			\arrow[rd, gray]
		\&\& \cA(-K_1) \arrow[lu, gray]
				\arrow[rrr, bend left = 15, gray]
				\arrow[ld, gray]
		\& \pi_1(K_0)^\metab \arrow[from=lll, bend right = 15, crossing over, gray]
				\arrow[ru, crossing over, gray]
				\arrow[rd, gray]
		\&\& \pi_1(-K_1)^\metab \arrow[lu, gray]
				\arrow[ld, gray]\\
	\& \cA(P_0(J)\#-P_1(J)) \arrow[rrr]
			\arrow[from=dd]
		\&\&\& \pi_1(P_0(J)\#-P_1(J))^\metab \arrow[rr, "\psi"]
		\&\& \Gamma\\
	w\cA(J) \arrow[ru, gray]
			\arrow[rd, gray]
		\&\& w\cA(-J) \arrow[lu, gray]
				\arrow[rrr, bend left = 15, gray]
				\arrow[ld, gray]
			\& \pi_1(J)^\metab \arrow[from=lll, bend right = 15, crossing over, gray]
				\arrow[ru, crossing over, gray]
				\arrow[rd, gray]
		\&\& \pi_1(-J)^\metab \arrow[lu, gray]
				\arrow[ld, gray]\\
	\& w\cA(J\#-J) \arrow[rrr]
		\&\&\& \pi_1(J\#-J)^\metab \arrow[rruu, bend right = 45, dashed, "\psi_{J\#-J}"]
\end{tikzcd}}$$
\caption{}
\label{fig:RhoDiagram}
\end{figure}

\setlength\mathsurround{0.8pt}

Since these metabolizers came from a slice disk, we have
\begin{align*}
	\rho(P_0(J)\#-P_1(J))
		& = \rho(K_0,\psi_{K_0})+\rho(J,\psi_J)+\rho(-K_1,\psi_{-K_1})+\rho(-J,\psi_{-J})\\
		& = \rho(K_0\#-K_1,\psi_{K_0\#-K_1})+\rho(J\#-J,\psi_{J\#-J})\\
		& = 0+0 = 0. 
\end{align*}
\end{proof}

\vfill
\pagebreak

%
%
%

\bibliographystyle{alpha} 
\bibliography{RelatedSatelliteOperatorsBib}

\end{document}